\newcommand{\vardel}{\varDelta}
\newcommand{\eps}{\epsilon}
\newcommand{\kap}{\kappa}
\newcommand{\ome}{\omega}
\DeclareMathOperator{\id}{\mathbf{1}}
\newcommand{\bfy}{\mathbf{y}}
\newcommand{\del}{\delta}
\newcommand{\tet}{\theta}
\newcommand{\alp}{\alpha}
\newcommand{\bfW}{\mathbf{W}}
\newcommand{\calL}{\mathcal{L}}
\newcommand{\calP}{\mathcal{P}}
\newcommand{\bfl}{\mathbf{l}}
\newcommand{\calK}{\mathcal{K}}
\newcommand\ZZ{\mathbb{Z}}
\newcommand{\Z}{\mathbb{Z}}
\newcommand\NN{\mathbb{N}}
\newcommand{\N}{\mathbb{N}}
\newcommand\CC{\mathbb{C}}
\newcommand\RR{\mathbb{R}}
\newcommand\R{\mathbb{R}}
\newcommand\QQ{\mathbb{Q}}
\newcommand\FF{\mathbb{F}}
\newcommand{\KK}{\mathbb{K}} 
\newcommand\p{\mathfrak{p}} 
\newcommand{\idealnorm}{\mathfrak{N}}
\newcommand\PP{\mathbb{P}} 
\newcommand{\A}{\mathbb{A}} 
\newcommand\GG{\mathbb{G}} 
\newcommand{\OO}{\mathcal{O}} 
\newcommand{\ii}[1]{\mathcal{J}_{#1}} 
\newcommand{\ic}[1]{\mathcal{I}_{#1}} 
\newcommand{\gen}[1]{\nu_{#1}} 
\DeclareMathOperator{\spec}{Spec} 
\DeclareMathOperator{\pic}{Pic} 
\DeclareMathOperator{\eff}{Eff} 
\DeclareMathOperator{\meas}{meas} 
\DeclareMathOperator{\cone}{cone} 
\DeclareMathOperator{\Fr}{Fr} 
\newtheorem{theorem}{Theorem}
\newtheorem{lemma}[theorem]{Lemma}
\newtheorem{prop}[theorem]{Proposition}
\theoremstyle{definition}
\newtheorem{proposition}[theorem]{Proposition}
\newtheorem{remark}[theorem]{Remark}
\newtheorem{assumption}[theorem]{Assumption}
\numberwithin{theorem}{section}
\numberwithin{equation}{section}
\begin{document}

\setcounter{tocdepth}{2}

\title{Hyperbola method on toric varieties}
  
\author{Marta Pieropan} 
\author{Damaris Schindler} 

\address{Utrecht University, Mathematical Institute, Budapestlaan 6, 3584 CD Utrecht, the Netherlands \emph{and} EPFL SB MATH CAG, B\^at. MA, Station 8, 1015 Lausanne, Switzerland}
\email{m.pieropan@uu.nl}
\address{Mathematisches Institut, Universitaet Goettingen, Bunsenstrasse 3-5, 37073 Goettingen, Germany}
\email{damaris.schindler@mathematik.uni-goettingen.de}
\date{\today}


\subjclass[2010]{11P21 (11A25, 11G50, 14G05, 14M25)}
\keywords{Hyperbola method, $m$-full numbers, Campana points, toric varieties.}

\begin{abstract}
	We develop a very general version of the hyperbola method which extends the known method by Blomer and  Br{\"u}dern for products of projective spaces to complete smooth split
	toric varieties. We use it 
	to count Campana points of bounded log-anticanonical height 
	on complete smooth split toric $\QQ$-varieties with torus invariant boundary. We apply the strong duality principle in linear programming to show the compatibility of our results with the conjectured asymptotic.
\end{abstract}

\maketitle
\tableofcontents

\section{Introduction}

This paper stems from an investigation of the universal torsor method \cite{MR1679841, MR3552013} in relation to the problem of counting Campana points of bounded height on log Fano varieties in the framework of \cite[Conjecture 1.1]{PSTVA}. 
Campana points are a notion of points that interpolate between rational points and integral points on certain log smooth pairs, or orbifolds, introduced and first studied by Campana \cite{MR2097416, MR2831280, MR3477539}. The study of the distribution of Campana points over number fields was initiated only quite recently and the literature on this topic is still sparse \cite{MR2931315, MR2968632, BY, Shute2, PSTVA, Shute, Streeter,   Xiao, BBKOPWproc}.
In this paper we deal with toric varieties, which constitute a fundamental family of examples for the study of the distribution of rational points \cite{MR1353919, MR1369408, MR1423638, MR1620682, MR1679841, Bre01b}, via a combination of the universal torsor method with a very general version of the hyperbola method, which we develop. 

We use the universal torsor method, instead of exploiting the toric group structure, because we hope to extend our approach to a larger class of log Fano varieties in the future. Indeed, 
the hyperbola method is well suited to deal with subvarieties \cite{MR3320489, Sch, MR3731304, BB, mignot1, Mignot, mignot2, BroHu}, and all log Fano varieties admit neat embeddings in toric varieties \cite{MR3307753, MR3275656} which  can be exploited for the universal torsor method.\par

One of the key technical innovations in this article is the development of a very general form of the hyperbola method, which is motivated by work of Blomer and  Br{\"u}dern in the case of products of projective spaces \cite{BB}. Mignot \cite{mignot2} has adjusted these ideas to complete smooth split toric varieties with simplicial effective cone. 
 With our approach we extend the work of Blomer and  Br{\"u}dern to complete smooth split toric varieties with additional flexibility to change the height function.\par
Let $f: \N^s\rightarrow \R_{\geq 0}$ be an arithmetic function for which one has asymptotics for summing the function $f$ over boxes, see Property I in Section \ref{sec:hyperbola_method}. Let $B$ be a large real parameter, $\calK$ a finite index set and $\alp_{i,k}\geq 0$ for $1\leq i\leq s$ and $k\in \calK$. The goal is then to use this information from sums over boxes to deduce an asymptotic formula for sums of the form 
\begin{equation*}
S^{f}(B):=\sum_{\substack{\prod_{i=1}^s y_i^{\alp_{i,k}}\leq B,\ \forall k\in \calK\\ y_i\in \N, 1\leq i\leq s }}f(\bfy).
\end{equation*}
We define the polyhedron $\calP \subset \R^s$ given by
\begin{equation*}
\sum_{i=1}^s \alp_{i,k} \varpi_i^{-1} t_i\leq 1, \quad k\in \calK
\end{equation*}
and
\begin{equation*}
t_i\geq 0,\quad 1\leq i\leq s.
\end{equation*}
Here the parameters $\varpi_i$, $1\leq i\leq s$ are defined in Property I for the function $f$. The linear function $\sum_{i=1}^s t_i$ takes its maximal value on a face of $\calP$ which we call $F$. We write $a$ for its maximal value.

\begin{theorem}\label{lemhyp3}
Let $f:\N^s\rightarrow \R_{\geq 0}$ be a function that satisfies Property I from Section 
\ref{sec:hyperbola_method}.
Assume that $\calP$ is bounded and non-degenerate, and that $F$ is not contained in a coordinate hyperplane of $\R^s$. Let $k=\dim F$. Then we have
\begin{gather*}
S^f(B)= (s-1-k)!  C_{f,M}c_{\calP} (\log B)^kB^{a} 
+O\left(  C_{f,E} (\log \log B)^{s}  (\log B)^{k-1} B^{a} \right),
\end{gather*}
where $C_{f,M}$ and $C_{f,E}$ are the constants in Property I and $c_\calP$ is the constant in equation (\ref{eqn65}).
\end{theorem}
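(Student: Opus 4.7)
My plan is to attack $S^f$ by a full dyadic decomposition in all $s$ variables, apply Property I on each dyadic box, and then analyse the resulting sum over admissible boxes using the facial structure of the linear programme that defines $\calP$. First, I would write $y_i \in [2^{m_i}, 2^{m_i+1})$ for $m_i \in \NN$, so that $S^f$ is a sum over $\bfm \in \NN^s$ of $f$-sums over dyadic boxes, restricted by conditions of the shape $\prod_i 2^{m_i\alp_{i,k}} \ll B$ for $k\in\calK$. By Property I each such box-sum equals $C_{f,M}\prod_i 2^{m_i \varpi_i}$ times an explicit factor, plus an error of order $C_{f,E}$ times a strictly smaller power of each $2^{m_i}$; Property II is the precise hypothesis one needs to sum these errors over $\bfm$.

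Next, I would rescale via $t_i = \varpi_i m_i (\log 2)/\log B$ so that the dyadic constraint region becomes approximately the dilation $(\log B)\cdot \calP$: the conditions $\prod_i 2^{m_i\alp_{i,k}}\le B$ become $\sum_i \alp_{i,k}\varpi_i^{-1}t_i\le 1$, matching (\ref{eqn1a})--(\ref{eqn1b}), while $\prod_i 2^{m_i\varpi_i}$ becomes $B^{\sum_i t_i}$. Since $\sum_i t_i$ is bounded by $a$ on $\calP$ and attains this bound on $F$, summing the geometric series $B^{\sum_i t_i}$ over $(\log B)\cdot \calP$ is dominated by boxes within $O(1)$ of $(\log B)\cdot F$: the $s-k$ transversal directions contribute convergent geometric series yielding an $O(1)$ weight, while the $k$ directions tangential to $F$ each contribute a factor $\log B$. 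Evaluating carefully, by splitting the polyhedron along the normal cone to $F$ and summing powers of $\log 2$ against the $k$-dimensional measure of $F$ from (\ref{eqn65}), produces the main term with its explicit constant $(s-1-k)!\,c_P$.

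The error term requires the most care, and this is where I expect the main technical work to sit. There are $O((\log B)^s)$ admissible dyadic boxes, but a generic Property I error $C_{f,E}\prod_i 2^{m_i\nu_i}$ with $\nu_i<\varpi_i$ rescales to $C_{f,E}B^{\sum_i(\nu_i/\varpi_i)t_i}$, which is geometrically smaller than $B^a$ outside a thin neighbourhood of $F$; in that thin neighbourhood I would invoke Property II to sum the errors directly, paying only a $(\log\log B)^s$ factor, and combine this with the $(\log B)^{k-1}$ obtained from being one transversal dimension below $F$. Dyadic boxes where some subset of variables $\bfy_\calI$ is small must be handled separately by fixing those coordinates and applying Condition (\ref{con50}) to the reduced $(s-|\calI|)$-variable sum; Assumption \ref{assp30} is what should guarantee that these degenerate strata contribute only to the error, and the hypothesis that $F$ is disjoint from every coordinate hyperplane ensures that none of these strata reaches the main asymptotic order.

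The core obstacle, as I see it, is the simultaneous error management: one has to show that the aggregate Property I error across all dyadic boxes near $F$ is controlled by $(\log\log B)^s(\log B)^{k-1}B^a$ rather than the naive $(\log B)^{s}B^a$. This compels a careful two-regime split into an ``off-face'' regime where geometric decay from $\nu_i<\varpi_i$ dominates, and a ``near-face'' regime where only Property II salvages the bound. The interplay between the linear programme defining $F$, the exponents $\varpi_i$, and Condition (\ref{con50}) on degenerate subsums is precisely what certifies that every lower-dimensional stratum of $\calP$ truly produces a lower-order contribution, and verifying this consistently would consume most of the proof.
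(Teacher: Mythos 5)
Your high-level architecture --- cover the summation region by boxes, apply Property~I on each box, organize the box sum by level $\sum_i t_i$, and observe that it is dominated by boxes near the face $F$, with small-variable strata handled via Property~II, Condition~(\ref{con50}), and Assumption~\ref{assp30} --- matches the paper's plan. But there is a genuine gap in the first step: you commit to a \emph{dyadic} decomposition with boxes $y_i\in[2^{m_i},2^{m_i+1})$, and this cannot produce the stated constant $(s-1-k)!\,c_P$. The paper instead works with boxes $y_i\in[\theta_i^{l_i},\theta_i^{l_i+1})$ for a base $\theta$ chosen very close to $1$, namely $1+(\log B)^{-10A}<\theta<1+(\log B)^{-A}$ as in~\eqref{eqn80b}. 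This is not a cosmetic choice. After grouping boxes by level $l$ and applying the lattice-point count of Lemma~\ref{rmin} (which feeds in the measure estimate~\eqref{eqn65} for $(s-1)$-dimensional slices $H_\delta\cap\mathcal P$, not, as you write, a $k$-dimensional measure of $F$), the main term reduces to
\[
C_{f,M}\,c_P\,(\log B)^k B^{a}\left(\frac{\theta-1}{\log\theta}\right)^{k}(\theta-1)^{s-k}\,g_{s-1-k}\!\left(\cdot,\theta^{-1}\right),
\]
and Lemma~\ref{lemgl2} gives $(\theta-1)^{s-k}g_{s-1-k}(\cdot,\theta^{-1})=\theta^{s-k}(s-1-k)!+O(\theta-1)+\cdots$. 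The residual factor $\left(\tfrac{\theta-1}{\log\theta}\right)^{k}\theta^{s-k}$ only tends to $1$ as $\theta\to1$; with $\theta=2$ fixed it is $\left(\tfrac{1}{\log 2}\right)^{k}2^{s-k}$, so you would get $C_{f,M}c_P(\log B)^k B^a\cdot\tfrac{2^{s-k}}{(\log 2)^{k}}\sum_{m\ge0}m^{s-1-k}2^{-m}$, a constant that does not equal $(s-1-k)!\,c_P$. Moreover, the error term $O_l(1-\theta)$ in Lemma~\ref{lemgl2} is only useful when $\theta$ is near $1$; for $\theta^{-1}=1/2$ it is $O(1)$ and comparable to the main term. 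Your proposal therefore needs the extra idea of letting the box base shrink with $B$ (or, alternatively, replacing the discrete sum over boxes by an integral and controlling the discretisation error --- a substantial additional argument you do not sketch).

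A secondary inaccuracy: Property~II is not the tool that lets you ``sum these errors over $\bfm$.'' The aggregate Property~I error over all boxes is controlled by the cutoff $y_i\ge(\log B)^{\tilde A}$ together with the power saving $(\min_i B_i)^{-\vardel}$ and the choice of $\theta$, which makes that piece negligible (of size $B^a(\log B)^{-A^\dagger}$). Property~II, the uniform constants $C_{f,M,\mathcal I}(\bfy_\mathcal I)$, Lemma~\ref{lem53}, and Assumption~\ref{assp30} are deployed for the complementary strata where some coordinates are below the cutoff, and this is where the $(\log\log B)^{s}$ loss actually originates (via the interval scheme $U_0,\dots,U_{s+1}$ adapted from Blomer--Br\"udern). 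Your sketch conflates these two error sources.
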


In comparison to earlier versions of the hyperbola method in work of Blomer and  Br{\"u}dern \cite{BB} or Mignot \cite{mignot2}, we only obtain a saving of a power of a $\log B$, but we can work with the weaker assumption of using only Property I. In contrast to \cite{BB} and \cite{mignot2} we no longer need  to assume that we can evaluate the function $f$ on lower-dimensional boxes after fixing a number of variables, called Property II in their work.\par
The case treated in \cite{BB} would in our notation correspond to an index set $\calK$ with one element where all the $\alp_{i,k}=\alp$ for all $1\leq i\leq s$ and some $\alp>0$, and $k=s-1$. Our attack to evaluate the sum $S^f$ starts in a similar way as in \cite{BB}. We cover the region given by the conditions $\prod_{i=1}^s y_i^{\alp_{i,k}}\leq B$, $k\in \calK$ with boxes of different side lengths on which we can evaluate the function $f$. One important ingredient in the hyperbola method in \cite{BB} is a combinatorial identity for the generating series
$$\sum_{\substack{j_1+\ldots +j_s\leq J\\ j_i\geq 0,\ 1\leq i\leq s}}t^{j_1+\ldots + j_s},$$
which needs to be evaluated for $J$ going to infinity. By induction the authors give a closed expression. For us this part of the argument breaks down, as we have in general more complicated polytopes that arise in the summation condition for $S^f$ and are not aware of comparable combinatorial identities for the tuples $(j_1,\ldots, j_s)$ lying in general convex polytopes. Instead, we approximate the number of integer points in certain intersections of hyperplanes with a convex polytope by lattice point counting arguments and then use asymptotic evaluations for sums of the form
$$\sum_{0\leq m\leq M} m^\ell \tet^m$$
for $0<\tet<1$ and $\ell,M\in \N$.\par
In comparison to Mignot's work \cite{mignot2}, we can deal with polytopes for which $|\mathcal{K}|>1$, and where $k$ is no longer restricted to the case $k=s-1$.

\

Our main application of Theorem \ref{lemhyp3} is a proof of \cite[Conjecture 1.1]{PSTVA} for smooth split toric varieties over $\QQ$ with the log-anticanonical height.
\begin{theorem}
\label{thm:toric}
Let $\Sigma$ be the fan of a complete smooth split toric variety $X$ over $\QQ$. Let $\{\rho_1,\dots,\rho_s\}$ be the set of rays of $\Sigma$.
For each  $i\in\{1,\dots,s\}$ fix a positive integer $m_i$
and denote by $D_i$ the  torus invariant divisor corresponding to $\rho_i$. Assume that $L:=\sum_{i=1}^s\frac 1{m_i} D_i$ is ample. 
Let $H_L$ be the height defined by $L$ as in Section \ref{sec:heights}.
 Let $\mathscr X$ be the toric scheme defined by $\Sigma$ over $\ZZ$, and for each $i\in\{1,\dots,s\}$, let $\mathscr D_i$ be the closure of $D_i$ in $\mathscr X$.
 For every $B>0$, let $N(B)$ be the number of Campana $\ZZ$-points on the Campana orbifold $(\mathscr X, \sum_{i=1}^s(1-1/m_i)\mathscr D_i)$ (in the sense of \cite[Definition 3.4]{PSTVA}) of height $H_L$ at most $B$.
Then for sufficiently large $B>0$,
 \begin{equation}
 \label{eq:mainthm_asympt}
 N(B) = c B(\log B)^{r-1} + O(
	B(\log B)^{r-2}(\log\log B)^{s}),
  \end{equation}
where $r$ is the rank of the Picard group of $X$, and $c$ is a positive constant  compatible with the prediction in \cite[\S3.3]{PSTVA}.
\end{theorem}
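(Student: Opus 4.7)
The plan is to express $N(B)$ as a sum $S^f$ of the form treated in Theorem~\ref{lemhyp3} via the universal torsor parametrization for the toric variety $X$, and then apply that theorem. First, by the Cox construction for the smooth split toric variety $X$, one parametrizes $X(\QQ)$ by $s$-tuples of positive integers $(y_1,\dots,y_s)$ satisfying the coprimality conditions $\gcd(y_i:\rho_i\not\in\sigma)=1$ for each maximal cone $\sigma\in\Sigma(n)$, modulo signs and a finite unit action. The Campana condition on $(\mathscr X,\sum_i(1-1/m_i)\mathscr D_i)$ translates, via this parametrization, to requiring that each $y_i$ be an $m_i$-full positive integer. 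The log-anticanonical height $H_L$ attached to $L=\sum_i(1/m_i)D_i$ is described in toric terms as $H_L(\bfy)=\max_{\sigma\in\Sigma(n)}\prod_i y_i^{\alp_{i,\sigma}}$, with non-negative exponents $\alp_{i,\sigma}=\langle m_\sigma,\rho_i\rangle+1/m_i$, where $m_\sigma$ denotes the vertex of the moment polytope of $L$ dual to $\sigma$; in particular $\alp_{i,\sigma}=0$ whenever $\rho_i\in\sigma$. Taking $\calK=\Sigma(n)$ with these $\alp_{i,k}$, the summation region defining $N(B)$ is exactly that of $S^f$.

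Second, I would take $f:\N^s\to\R_{\ge 0}$ to be the indicator of $m_i$-fullness of each coordinate, convolved with the Möbius factors encoding the coprimality conditions. Property~I and Property~II, together with the box-subset condition~\eqref{con50} uniformly over $\calI\subset\{1,\dots,s\}$, follow from the classical asymptotic $\#\{y\le Y:y\text{ is }m\text{-full}\}=c_m Y^{1/m}+O(Y^{1/(m+1)+\eps})$ combined with Möbius inversion over divisors; this yields $\varpi_i=1/m_i$ and a leading constant $C_{f,M}$ in Euler product form. The geometric hypotheses of Theorem~\ref{lemhyp3} are then verified: $\calP$ is bounded since $L$ is ample, and the non-degeneracy of $\calP$ together with $F$ avoiding the coordinate hyperplanes is encoded in Assumption~\ref{assp:thm:polytopes}.

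Third, a short fan computation using $\alp_{i,k} m_i=\langle m_k, m_i\rho_i\rangle+1$ shows that the maximum of $\sum_i t_i$ on $\calP$ equals $a=1$ (this is the log-anticanonical property of $L$), attained on the face $F=\{t\ge 0:\sum_i t_i=1,\ \sum_i m_i t_i\rho_i=0\}$. Via the short exact sequence $0\to M\to\ZZ^s\to \mathrm{Pic}(X)\to 0$, one computes $\dim F=r-1$, where $r$ is the Picard rank of $X$, matching the exponent $(\log B)^{r-1}$ in~\eqref{eq:mainthm_asympt}. Theorem~\ref{lemhyp3} then delivers~\eqref{eq:mainthm_asympt} with explicit constant $c=(s-r)!\,C_{f,M}c_{\calP}>0$ and the stated error term $O(B(\log B)^{r-2}(\log\log B)^{s})$.

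Finally, to match $c$ with the prediction of \cite[\S3.3]{PSTVA}, one invokes strong duality in linear programming (as announced in the abstract) to rewrite the combinatorial factor $c_{\calP}$ as an integral over the dual of the face $F$, reinterpretable in terms of the effective cone of $X$. Combined with the Euler-product description of $C_{f,M}$ as a product of local Campana Tamagawa densities, this yields the conjectural constant. The main obstacle is precisely this last step: the hyperbola method packages $c$ as a combinatorial volume on the polytope $\calP$, whereas the conjecture formulates it intrinsically via the effective cone and adelic Tamagawa measures, so bridging the two descriptions through LP duality---and checking positivity of the resulting quantity---forms the technical heart of the comparison.
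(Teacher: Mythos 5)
Your overall architecture mirrors the paper's: universal torsor parametrization, translation of the Campana condition to $m_i$-fullness, reading off the exponents $\alpha_{i,\sigma}$ from the $L(\sigma)$ in the height, applying Theorem~\ref{lemhyp3} with $\varpi_i = 1/m_i$, and identifying $a=1$, $\dim F = r-1$ via the Picard sequence and the hypothesis that $L$ is the log-anticanonical divisor. That part is essentially the same approach.

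There is, however, a genuine gap in the way you handle the coprimality conditions. You propose to take $f$ to be ``the indicator of $m_i$-fullness of each coordinate, convolved with the Möbius factors encoding the coprimality conditions,'' and then feed a single such $f$ into Theorem~\ref{lemhyp3}. This cannot work as stated: if the Möbius signs are built into $f$, then $f$ is not $\RR_{\geq 0}$-valued, violating the standing hypothesis of Theorem~\ref{lemhyp3} (the hyperbola method exploits non-negativity repeatedly, e.g.\ when bounding partial sums $S^{f}_{1}(\mathbf V,\mathbf W)$ from above and when discarding overcounts). If instead you mean that $f$ is the genuine indicator of the $m_i$-full coprime locus, then $f$ is non-negative but fails Property~I: the coprimality conditions $\gcd(y_i : i\in\ic{\sigma})=1$ couple the coordinates, so $\sum_{y_i\leq B_i} f(\mathbf y)$ does not factor as $C_{f,M}\prod B_i^{\varpi_i}$ plus the stated error. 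Either reading breaks an explicit hypothesis of the hyperbola method.

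The fix, as in Section~\ref{sec:proof_thmtoric}, is to keep the Möbius inversion \emph{outside} the application of Theorem~\ref{lemhyp3}. One defines, for each tuple $\mathbf d$ of squarefree positive integers, the non-negative function $f_{\mathbf m,\mathbf d}$ that is the indicator of ``$d_i\mid y_i$ and $y_i$ is $m_i$-full for all $i$.'' Lemma~\ref{lem:counting_function_f} shows that $f_{\mathbf m,\mathbf d}$ satisfies Properties~I and~II and condition~\eqref{con50} with constants $C_{f,M}=\prod_i c_{m_i,d_i}$ and $C_{f,E}\ll_{\mathbf m,\varepsilon}(\prod_i d_i)^{-2/3+\varepsilon}$; crucially, the error constant decays in $\mathbf d$. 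One then applies Theorem~\ref{lemhyp3} to each $f_{\mathbf m,\mathbf d}$ separately, obtaining Proposition~\ref{prop:A(B,d)}, and only afterwards sums over $\mathbf d$ with the Möbius weights of \eqref{eq:moebius_inversion}; the decay of $C_{f,E}$ in $\prod d_i$ together with \cite[Lemma 11.15]{MR1679841} makes that sum converge. Without this $\mathbf d$-uniformity you would not be able to control the Möbius sum, so stating uniformity only in $\calI$, as you do, is not enough. Your final paragraph about LP duality is also slightly misattributed: strong duality is used in Proposition~\ref{prop:description_a} to reconcile the exponent $a$ with the conjectural $a(L)$, while the identification of the leading constant proceeds via Lemma~\ref{lem:alpha_constant} (computing $c_{\calP}$ in terms of $\alpha(L)$) and Proposition~\ref{prop:constant} (matching the Euler product to the Tamagawa measure), not via LP duality.
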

Our application of the hyperbola method recovers Salberger's result \cite{MR1679841} and improves on the error term by saving a factor $(\log B)^{1-\varepsilon}$ where \cite{MR1679841} saves only a factor $(\log B)^{1-1/f-\varepsilon}$, where $f\geq 2$ is an integer that depends on the toric variety.

Theorem \ref{thm:toric} could also be deduced from work of de la Bret\`eche \cite{Bre01a}, \cite{Bre01b}, who developed a multi-dimensional Dirichlet series approach to count rational points of bounded height on toric varieties, or from work of Santens \cite{Santens}. Another approach could be via  harmonic analysis of the height zeta function, even though such a proof would probably be more involved than  the case of compactifications of vector groups \cite{PSTVA}.
Our proof proceeds via the universal torsor method introduced by Salberger in \cite{MR1679841} in combination with Theorem \ref{lemhyp3}. 
One of our main motivations for this approach is that it opens a path to counting Campana points on subvarieties of toric varieties.\par 
When we apply Theorem \ref{lemhyp3} to prove Theorem \ref{thm:toric}, we need to verify that both the exponent of $B$ as well as the power of $\log B$ match the prediction in \cite{PSTVA}. The exponent $a$ in Theorem \ref{lemhyp3} is the result of a linear optimization problem. Similarly, the construction of the height function leading to the exponent one of $B$ in Theorem \ref{thm:toric} involves another linear optimization problem. We use the strong duality property in linear programming \cite[Chapter 6]{Dantzig} to recognize that the exponents 
are indeed compatible, and that this holds heuristically also in the more general setting where the height is not necessarily log-anticanonical. For the compatibility of the exponents of $\log B$ we exploit a different duality setup, which involves the Picard group of $X$. 

In upcoming work, we show how Theorem \ref{lemhyp3} can be used to count rational points and Campana points of bounded height on certain subvarieties of toric varieties.

\

The paper is organized as follows. 
In Section \ref{sec:preliminaries} we provide some auxiliary estimates on variants of geometric sums which are used later in Section \ref{sec:hyperbola_method}. 
In Section \ref{sec:volume_section_polytope} we study volumes of slices of polytopes under small deformations.
In Section \ref{sec:hyperbola_method} we develop the hyperbola method and give a proof of Theorem \ref{lemhyp3}.
Sections \ref{sec:m-full} and \ref{sec:Campana_points_on_toric_varieties} are dedicated to the application of the hyperbola method to prove Theorem \ref{thm:toric}.
In Section \ref{sec:m-full} we study estimates for $m$-full numbers of bounded size subject to certain divisibility conditions, and we produce the estimates in boxes for the function $f$ associated to the counting problem in Theorem \ref{thm:toric}.
In Section \ref{sec:Campana_points_on_toric_varieties}  we describe the heights associated to semiample $\QQ$-divisors on toric varieties over number fields, we study some combinatorial properties of the polytopes that play a prominent role in the application of the hyperbola method, and we show that the heuristic expectations coming from the hyperbola method agree with the prediction in \cite[Conjecture 1.1]{PSTVA} on split toric varieties for Campana points of bounded height, where the height does not need to be anticanonical. We conclude the section with the proof of Theorem \ref{thm:toric}.\par

\

This article subsumes our previous work on the hyperbola method, which had appeared on arxiv.org under arXiv:2001.09815v2, and which proved Theorems \ref{lemhyp3} and \ref{thm:toric} under additional technical assumptions on the corresponding polytopes.

\subsection{Notation}
	We denote by $\NN$ or $\ZZ_{>0}$ the set of positive integers.
	We use $\sharp S$ or $| S |$ to indicate the cardinality of a finite set $S$.
	Bold letters denote $s$-tuples of real numbers, and for given $\mathbf x\in\RR^s$ 
	we denote by $x_1,\dots,x_s\in\RR$ the elements such that  $\mathbf x=(x_1,\dots,x_s)$. 
	For any subset $S\subset\RR^s$ we denote by $\cone(S)$ the cone generated by $S$.

	We denote by $\FF_p$ the finite field with $p$ elements and by $\overline{\FF}_p$ an algebraic closure. 		For a number field $\KK$, we denote by $\OO_{\KK}$ the ring of integers, 
	and by $\idealnorm(\mathfrak a)$ the norm of an ideal $\mathfrak a$ of $\OO_{\KK}$.
	We denote by $\Omega_{\KK}$ the set of places of $\KK$, by $\Omega_f$ the set of finite places, 
	and by $\Omega_\infty$ the set of infinite places. 
	For every place $v$ of $\KK$, we denote by  $\KK_v$ the completion of $\KK$ at $v$, 
	and we define $|\cdot|_v=|N_{\KK_v/\QQ_{\tilde v}}(\cdot)|_{\tilde v}$, 
	where $\tilde v$ is the place of $\QQ$ below $v$ and $|\cdot|_{\tilde v}$ is the usual real 
	or $p$-adic absolute value on $\QQ_{\tilde v}$.
	We denote by $|\cdot|$ the usual absolute value on $\RR$.

	We denote  the Picard group and the effective cone of a smooth variety $X$ 
	by $\pic(X)$ and $\eff(X)$, respectively. 
	For a divisor $D$ on $X$ we denote by $[D]$ its class in $\pic(X)$.
	We say that a $\QQ$-divisor $D$ on $X$ is semiample if there exists a positive integer $t$ 
	such that $tD$ has integer coefficients and is base point free.

\section{Preliminaries}
\label{sec:preliminaries}

In the hyperbola method in the next section we need good approximations for finite sums of the form
$$g_\ell (M,\tet):=\sum_{0\leq m\leq M} m^\ell \tet^m,$$
for some $0<\tet <1$ and natural numbers $\ell,M\geq 0$ (here and in the following we understand $0^0:=1$). 
In this subsection we also write $g_\ell(M)$ for $g_\ell(M,\tet)$.

We will use the following result.

\begin{lemma}\label{lemgl2}
For an integer $\ell\geq 0$ and $0<\tet < 1$ and a real number $M>\ell$ we have
$$(\tet -1)^{\ell+1}g_\ell(M)=(-1)^{\ell+1} \ell! +O_\ell(1-\tet)+O_\ell \left( \tet^M M^\ell\right).$$
\end{lemma}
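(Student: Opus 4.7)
The plan is to relate $g_l(M,\tet)$ to its infinite counterpart $g_l(\infty,\tet):=\sum_{m\geq 0}m^l\tet^m$, and to split the estimate into a main term coming from $g_l(\infty,\tet)$ plus a tail $\sum_{m>M}m^l\tet^m$. First I would show by induction on $l$ that
\begin{equation*}
g_l(\infty,\tet)=\frac{P_l(\tet)}{(1-\tet)^{l+1}}
\end{equation*}
for a polynomial $P_l$ with $P_l(1)=l!$; the base $l=0$ is the geometric series, while the inductive step comes from applying $\tet\tfrac{d}{d\tet}$ to both sides, which yields the recursion $P_{l+1}(\tet)=\tet(1-\tet)P_l'(\tet)+(l+1)\tet P_l(\tet)$ and hence $P_{l+1}(1)=(l+1)P_l(1)$. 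Multiplying by $(\tet-1)^{l+1}$ and Taylor-expanding $P_l$ around $\tet=1$ then gives
\begin{equation*}
(\tet-1)^{l+1}g_l(\infty,\tet)=(-1)^{l+1}l!+O_l(1-\tet).
\end{equation*}

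The main obstacle will be to bound the tail so that $(\tet-1)^{l+1}\sum_{m>M}m^l\tet^m=O_l(\tet^M M^l)$, uniformly in $\tet\in(0,1)$ and in real $M>l$. The function $m\mapsto m^l\tet^m$ attains its maximum around $m=l/(1-\tet)$, which may well exceed $M$, so one cannot simply bound the tail by its leading term. Instead I would write $m=\lfloor M\rfloor+k$ with $k\geq 1$, expand $(\lfloor M\rfloor+k)^l$ by the binomial theorem, and apply the identity above to each partial sum $\sum_{k\geq 1}k^j\tet^k=O_j((1-\tet)^{-(j+1)})$. This produces
\begin{equation*}
\sum_{m>M}m^l\tet^m=O_l\biggl(\tet^M\sum_{j=0}^l\frac{M^{l-j}}{(1-\tet)^{j+1}}\biggr),
\end{equation*}
so that after multiplication by $(1-\tet)^{l+1}$ one obtains $O_l\bigl(\tet^M\sum_{j=0}^l(M(1-\tet))^{l-j}\bigr)=O_l(\tet^M+\tet^M M^l)$ since $1-\tet<1$. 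The hypothesis $M>l\geq 0$ forces $M\geq 1$ whenever $l\geq 1$, so the bound collapses to $O_l(\tet^M M^l)$ in that range; the residual case $l=0$, $0<M<1$ reduces to $g_0(M,\tet)=1$ and is immediate.

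Combining the infinite-sum identity with the tail estimate yields the lemma. Throughout, care is needed to confirm that the implied constants depend only on $l$ and not on $\tet$ or $M$, but this is automatic since $P_l$ has finitely many coefficients that are fixed integers once $l$ is fixed.
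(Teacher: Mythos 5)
Your proof is correct, but it takes a genuinely different route from the paper's. The paper first establishes an exact combinatorial identity for $(\theta-1)^{l+1}g_l(M)$ (Lemma~\ref{lemgl}) by expanding $(\theta-1)^{l+1}$ binomially, reindexing $\sum_{h,m}(m-h)^l\theta^{m+h}$ over $h\le m\le M+h$, and killing the bulk of the sum via the finite-difference identity $\sum_{h}\binom{l+1}{h}(-1)^{l+1-h}h^\alpha=0$ for $\alpha\le l$; it then derives Lemma~\ref{lemgl2} from that identity through a further round of binomial and Faulhaber manipulations (plus an identity from Gould) that collapse the leading piece to $(-1)^{l+1}l!$. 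You instead bypass the intermediate exact identity: you use the closed form $g_l(\infty,\theta)=P_l(\theta)/(1-\theta)^{l+1}$, obtained by iterating $\theta\,d/d\theta$ on the geometric series, to read off the main term from $P_l(1)=l!$, and then bound the tail $\sum_{m>M}m^l\theta^m$ by shifting the summation index past $\lfloor M\rfloor+1>M$ and applying the same closed form to each binomial piece. This is a more direct and elementary derivation of exactly what the paper uses (Lemma~\ref{lemgl} is not used again), and it yields the same uniformity in $\theta$ and $M$. One small point worth noting: the argument works for $l=0$ without a separate residual case, since $M^0=1$ makes $\theta^M+\theta^M M^0=O(\theta^M M^0)$ automatically; and in the tail estimate you should be explicit that the sum starts at $m=\lfloor M\rfloor+1>M$, so $\theta^{\lfloor M\rfloor+1}\le\theta^M$ (the inequality goes the right way) and $\lfloor M\rfloor+1\le M+1\ll_l M$ once $M>l\ge1$.
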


Lemma \ref{lemgl2} can be deduced from the following statement. 

\begin{lemma}\label{lemgl}
Assume that $M>\ell\geq 0$ and $\tet >0$. Then we have
\begin{gather*}
(\tet -1)^{\ell+1}g_\ell(M)= \sum_{0\leq m<\ell+1}\tet^m \sum_{h=0}^m \binom{\ell+1}{h}(-1)^{\ell+1-h}(m-h)^\ell \\
+ \tet^M \sum_{0<m\leq \ell+1}\tet^m \sum_{h=m}^{\ell+1}\binom{\ell+1}{h}(-1)^{\ell+1-h}\sum_{k=0}^\ell \binom{\ell}{k} M^k (m-h)^{\ell-k}.
\end{gather*}
\end{lemma}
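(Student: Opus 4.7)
The plan is to expand $(\tet-1)^{l+1}$ by the binomial theorem and reorganize the resulting double sum according to the exponent of $\tet$, exploiting the classical fact that an $(l+1)$-st finite difference annihilates any polynomial of degree $\leq l$.

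Concretely, I would begin by writing
\begin{equation*}
(\tet-1)^{l+1} g_l(M) = \sum_{h=0}^{l+1}\binom{l+1}{h}(-1)^{l+1-h}\sum_{0\leq m\leq M} m^l\,\tet^{m+h},
\end{equation*}
and then substitute $n=m+h$ in the inner sum, so the range becomes $h\leq n\leq M+h$. Swapping the order of summation gives
\begin{equation*}
(\tet-1)^{l+1} g_l(M) = \sum_{n=0}^{M+l+1}\tet^n \!\!\!\sum_{\max(0,n-M)\leq h\leq \min(l+1,n)}\!\!\!\binom{l+1}{h}(-1)^{l+1-h}(n-h)^l.
\end{equation*}
Under the hypothesis $M>l$ the values of $n$ split naturally into three regimes: $0\leq n\leq l$ (where $h$ ranges over $0\leq h\leq n$), $l+1\leq n\leq M$ (where $h$ ranges over the full interval $0\leq h\leq l+1$), and $M+1\leq n\leq M+l+1$ (where $h$ ranges over $n-M\leq h\leq l+1$).

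The key simplification is that in the middle regime the $h$-sum is exactly the $(l+1)$-st forward difference of the polynomial $x\mapsto(n-x)^l$, which has degree $l$ in $x$; hence this regime contributes zero. The first regime already yields, term by term with $m=n$, the first sum in the statement of the lemma. For the third regime, set $n=M+m$ with $1\leq m\leq l+1$; the $\tet^n$ factor becomes $\tet^M\tet^m$, and I would expand
\begin{equation*}
(n-h)^l = (M+(m-h))^l = \sum_{k=0}^{l}\binom{l}{k}M^k(m-h)^{l-k}
\end{equation*}
by the binomial theorem, producing precisely the second sum in the statement. Combining the three contributions yields the claimed identity.

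There is no real obstacle here: the only step requiring care is making sure the three $n$-regimes are non-overlapping and together cover $0\leq n\leq M+l+1$, which is exactly what the hypothesis $M>l$ guarantees, and the vanishing of the middle regime is the standard identity $\sum_{h=0}^{l+1}\binom{l+1}{h}(-1)^{l+1-h}P(h)=0$ for any polynomial $P$ of degree $\leq l$, applied to $P(h)=(n-h)^l$.
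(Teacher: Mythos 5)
Your proposal is correct and follows essentially the same route as the paper: both expand $(\theta-1)^{l+1}$ by the binomial theorem, shift the exponent of $\theta$, split the range into the three regimes you describe, eliminate the middle regime via the vanishing of the $(l+1)$-st forward difference of a degree-$l$ polynomial (the paper's identity \eqref{eqnbinom}), and finally expand $(M+m-h)^l$ by the binomial theorem to obtain the second sum. There is no substantive difference in the argument.
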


For the proof of Lemma \ref{lemgl} and Lemma \ref{lemgl2} we need the following identity. For an integer $0\leq \alp\leq \ell$ we have
\begin{equation}\label{eqnbinom}
\sum_{h=0}^{\ell+1}\binom{\ell+1}{h} (-1)^{\ell+1-h}h^\alp =0.
\end{equation}
To see this consider the identity
$$(t-1)^{\ell+1}=\sum_{h=0}^{\ell+1}\binom{\ell+1}{h} t^h (-1)^{\ell+1-h}.$$
Now take derivatives with respect to $t$ and then set $t=1$.\par

\begin{proof}[Lemma \ref{lemgl} implies Lemma \ref{lemgl2}]
We start in observing that
\begin{gather*}
(\tet -1)^{\ell+1}g_\ell(M)= \sum_{0\leq m<\ell+1}\tet^m \sum_{h=0}^m \binom{\ell+1}{h}(-1)^{\ell+1-h}(m-h)^\ell 
+O_\ell \left( \tet^M M^\ell\right)\\
= \sum_{0\leq m\leq \ell} \sum_{h=0}^m \binom{\ell+1}{h}(-1)^{\ell+1-h}(m-h)^\ell \\
+O_\ell(|\tet-1|)+O_l \left( \tet^M M^l\right).
\end{gather*}
We further compute
\begin{gather*}
(\tet -1)^{\ell+1}g_\ell(M)= \sum_{0\leq m\leq \ell+1} \sum_{h=0}^m \binom{\ell+1}{h}(-1)^{\ell+1-h}(m-h)^\ell \\ 
- \sum_{h=0}^{\ell+1} \binom{\ell+1}{h}(-1)^{\ell+1-h}(\ell+1-h)^\ell  \\
+O_\ell(|\tet-1|)+O_\ell \left( \tet^M M^\ell\right).
\end{gather*}
Note that the term in the second line is equal to zero by equation (\ref{eqnbinom}). Hence we have
\begin{gather*}
(\tet -1)^{\ell+1}g_\ell(M)= \sum_{0\leq m\leq \ell+1} \sum_{h=0}^m \binom{\ell+1}{h}(-1)^{\ell+1-h}(m-h)^\ell \\ 
+O_\ell(|\tet-1|)+O_\ell \left( \tet^M M^\ell\right).
\end{gather*}
We now switch the summation of $m$ and $h$ to obtain
\begin{equation*}
\begin{split}
(\tet -1)^{\ell+1}g_\ell(M)&= \sum_{0\leq h\leq \ell+1}  \binom{\ell+1}{h}(-1)^{\ell+1-h}\sum_{h\leq m\leq \ell+1} (m-h)^\ell \\ 
&+O_\ell(|\tet-1|)+O_\ell \left( \tet^M M^\ell\right)\\
&= \sum_{0\leq h\leq \ell+1}  \binom{\ell+1}{h}(-1)^{\ell+1-h}\sum_{0\leq t\leq \ell+1-h} t^\ell \\ 
&+O_\ell(|\tet-1|)+O_\ell \left( \tet^M M^\ell\right).
\end{split}
\end{equation*}
By the Faulhaber formulas $\sum_{0\leq t\leq \ell+1-h} t^\ell$ is a polynomial in $h$ with leading term
$$\frac{(\ell+1-h)^{\ell+1}}{\ell+1} = \frac{(-1)^{\ell+1}}{\ell+1}h^{\ell+1} + \mbox{ lower order terms in } h.$$
Using equation (\ref{eqnbinom}) we hence obtain
\begin{gather*}
(\tet -1)^{\ell+1}g_\ell(M)=  \sum_{0\leq h\leq \ell+1}  \binom{\ell+1}{h}(-1)^{\ell+1-h} \frac{(-1)^{\ell+1}}{\ell+1}h^{\ell+1} \\
+O_\ell(|\tet-1|)+O_\ell \left( \tet^M M^\ell\right)\\
= \frac{1}{\ell+1}\sum_{0\leq h\leq \ell+1}  \binom{\ell+1}{h}(-1)^{h} h^{\ell+1} \\
+O_\ell(|\tet-1|)+O_\ell \left( \tet^M M^\ell\right).
\end{gather*}
By equation (1.13) in \cite{Gould} we have
$$\sum_{0\leq h\leq \ell+1}  \binom{\ell+1}{h}(-1)^{h} h^{\ell+1} = (-1)^{\ell+1} (\ell+1)!$$
Hence we get
\begin{gather*}
(\tet -1)^{\ell+1}g_\ell(M)=(-1)^{\ell+1} \ell! +O_\ell(|\tet-1|)+O_\ell \left( \tet^M M^\ell\right). \qedhere
\end{gather*}
\end{proof}

We finish this section with a proof of Lemma \ref{lemgl}. 

\begin{proof}[Proof of Lemma \ref{lemgl}]
We compute
\begin{equation*}
\begin{split}
(\tet-1)^{\ell+1}g_\ell(M)&= (\tet-1)^{\ell+1}\sum_{0\leq m\leq M} m^\ell \tet^m \\
&= \sum_{h=0}^{\ell+1}\binom{\ell+1}{h}\tet^h (-1)^{\ell+1-h}\sum_{0\leq m\leq M} m^\ell \tet^m\\
&= \sum_{h=0}^{\ell+1} \binom{\ell+1}{h} (-1)^{\ell+1-h}\sum_{0\leq m\leq M}m^\ell \tet^{m+h}\\
&= \sum_{h=0}^{\ell+1} \binom{\ell+1}{h} (-1)^{\ell+1-h} \sum_{h\leq m\leq M+h} (m-h)^\ell \tet^m.
\end{split}
\end{equation*}
We split the last summation into three ranges depending on the size of $m$ and get
\begin{equation*}
\begin{split}
(\tet-1)^{\ell+1}g_\ell(M)&= \sum_{h=0}^{\ell+1} \binom{\ell+1}{h} (-1)^{\ell+1-h} \sum_{\ell+1\leq m\leq M} (m-h)^\ell \tet^m\\
&+ \sum_{h=0}^{\ell} \binom{\ell+1}{h} (-1)^{\ell+1-h} \sum_{h\leq m < \ell+1} (m-h)^\ell \tet^m\\
&+\sum_{h=1}^{\ell+1} \binom{\ell+1}{h} (-1)^{\ell+1-h} \sum_{M < m\leq M+h} (m-h)^\ell \tet^m\\
&= \sum_{\ell+1\leq m\leq M} \tet^m \sum_{k=0}^\ell \binom{\ell}{k} m^k \sum_{h=0}^{\ell+1} \binom{\ell+1}{h} (-1)^{\ell+1-h} (-h)^{\ell-k}\\
&+\sum_{0\leq m <\ell+1} \tet^m  \sum_{h=0}^{m} \binom{\ell+1}{h} (-1)^{\ell+1-h}(m-h)^\ell \\
&+\sum_{M<m\leq M+\ell+1} \tet^m \sum_{h=m-M}^{\ell+1} \binom{\ell+1}{h} (-1)^{\ell+1-h} (m-h)^\ell
\end{split}
\end{equation*}
We now use the identity (\ref{eqnbinom}) for the third last line and deduce that
\begin{gather}
(\tet-1)^{\ell+1}g_\ell(M)=\sum_{0\leq m <\ell+1} \tet^m  \sum_{h=0}^{m} \binom{\ell+1}{h} (-1)^{\ell+1-h}(m-h)^\ell \\
+\sum_{0<m\leq \ell+1} \tet^{M+m} \sum_{h=m}^{\ell+1} \binom{\ell+1}{h} (-1)^{\ell+1-h} (M+m-h)^\ell
\end{gather}
Now the lemma follows in expanding each of the terms $(M+m-h)^\ell$.
\end{proof}

\section{Volumes of certain sections of polytopes}
\label{sec:volume_section_polytope}
In this section we provide some estimates on the volumes of intersections 
	of convex polytopes with certain hyperplanes. 
	These will be used in the next section in the development of our generalized form of the hyperbola method.	
	The vector space $\RR^s$ is endowed with a fixed inner product that is used to define orthogonality and the Lebesgue measures.

		\begin{prop}
	\label{prop:volume_section_polytope}
 	Let $\mathcal P\subseteq \RR^s$ be an $s$-dimensional convex polytope with $s\geq1$. 
 	Let $\mathcal F$ be a proper face of $\mathcal P$. 
 	Let $H\subseteq\RR^s$ be a hyperplane such that $H\cap \mathcal P=\mathcal F$. 
 	Let $w\in\RR^s$ such that $\mathcal P\subseteq H+\RR_{\geq0}w$.
	For $\delta> 0$, let $H_\delta:=H+\delta w$. Let $k:=\dim \mathcal F$. 
	We denote by $\meas_j$ the $j$-dimensional measure 
	induced by the Lebesgue measure on $\RR^s$. Then
	\begin{enumerate}[label=(\roman*), ref=(\roman*)]
	\item 
	\label{item:volume_s-1}
 	for $\delta>0$ sufficiently small,
	\[
	\meas_{s-1} ( H_\delta \cap \mathcal P)= 
	c \delta^{s-1-k}+O(\delta^{s-k}),
	\]
	where $c$ is a positive constant that depends on $\mathcal P$, $\mathcal F$ and $H$.
	\item
	\label{item:volume_boundary}
	If $s\geq 2$, for $\delta>0$ sufficiently small we have
	\[
	\meas_{s-2} (\partial( H_\delta \cap \mathcal P))= \begin{cases}
	c' \delta^{s-2-k}+O(\delta^{s-1-k}),  & \text{ if } k\leq s-2,\\
	c' +O(\delta),  & \text{ if } k= s-1,
	\end{cases}
	\]
	where $c'$ is a positive constant that depends on $\mathcal P$, $\mathcal F$ and $H$.
	\end{enumerate}
	Let $T\subseteq \RR^s$ be a hyperplane such that $T\cap \mathcal P$ is a face of $\mathcal P$. Let $d:=\dim\mathcal P\cap T$.
	Let $u\in\RR^s$ be a vector such that $\mathcal P\subseteq T+\RR_{\geq0}u$.
 	For $\kappa>0$, let $T_\kappa=T+{\kappa}u$ and $T_{\leq\kappa}=T+[0,\kappa] u$.  Then
	\begin{enumerate}[resume, label=(\roman*), ref=(\roman*)]
	\item
	\label{item:volume_s-1_kappa}
 	for $\delta$ and $\kappa$ sufficiently small and positive,
 	\[
 	\meas_{s-1} ( H_\delta \cap \mathcal P \cap T_{\leq \kappa}) \ll \begin{cases} 
 	\kappa \delta^{s-1-k} & \text{ if } \mathcal F\not\subseteq T,\\
 	\min\{\kappa,\delta\} \delta^{s-2-k} & \text{ if } \mathcal F\subseteq T,
 	\end{cases}
 	\]
 	where the implicit constant is independent of $\kappa$ and $\delta$.
 		\item
	\label{item:volume_s-2_delta_kappa}
	For $\delta$ and $\kappa$ sufficiently small and positive,
	\[
	\meas_{s-2} ( H_\delta \cap \mathcal P \cap T_{\kappa}) \ll \begin{cases}
	\delta^{s-1-k} & \text{ if } \mathcal F\not\subseteq T,\\
	\delta^{d-k-1}\min\{\delta,\kappa\}^{s-d-1} & \text{ if } \mathcal F\subseteq T \text{ and } k\leq s-2,\\
	1 & \text{ if } k=s-1,
	\end{cases}
	\]
	where the implicit constant is independent of $\kappa$ and $\delta$.
 	\item
 	\label{item:volume_boundary_kappa}
 	If $s\geq 2$, for $\delta$ and $\kappa$ sufficiently small and positive,
 	\begin{gather*}
	\meas_{s-2} \partial (H_{\del}\cap \calP\cap T_{\leq\kappa})  
	\ll\begin{cases}
	\kappa \del^{\max\{s-2-k,0\}} + \del^{s-1-k} & \text{ if }  \mathcal F\not\subseteq T, \\
  	\delta^{\max\{s-2-k, 0\}}  & \text{ if } \mathcal F\subseteq T,
	 \end{cases}
	\end{gather*}
	where the implicit constant is independent of $\kappa$ and $\delta$.
 	\end{enumerate}
 	Let $V\subseteq\RR^s$ be an affine space of dimension $m$ with $k\leq m\leq s-1$. We denote by $p_V:\RR^n\to V$ the orthogonal projection onto $V$.  Then
	\begin{enumerate}[resume, label=(\roman*), ref=(\roman*)]
	\item
	\label{item:volume_proj} 	
 	for $\delta$ and $\kappa$ sufficiently small and positive such that $\delta\leq\kappa$,
	\begin{equation*}
	\meas_mp_V(H_\delta\cap\mathcal P\cap T_{\leq \kappa})
	\ll \begin{cases}
	\kappa \delta^{m-k} & \text{ if $\mathcal F\nsubseteq T$ and $\dim p_V(\mathcal F)\geq 1$,}\\
	\delta^{m-k} & \text{ otherwise,}\\
	\end{cases}
	\end{equation*}
	where the implicit constant is independent of $\kappa$ and $\delta$.
	\end{enumerate}
	\end{prop}
	\begin{proof}	
	If $\mathcal F\cap T=\emptyset$, 
	then for $\delta$ and  $\kappa$ small enough we have 
	$H_\delta \cap \mathcal P \cap T_{\leq\kappa}=\emptyset$. 
	Hence we can assume that $\mathcal F\cap T\neq\emptyset$.
	 		
	\emph{Step 1.}
	We first prove the case where $\mathcal P$ is a simplex. 
	We denote by $v_0, \dots, v_k$ 
	the vertices of $\mathcal F$ and by $v_{k+1},\dots, v_s$ the vertices of $\mathcal P$ 
	not contained in $\mathcal F$. Since $\mathcal F\cap T\neq\emptyset$, we can assume that $v_0\in T$.
	Up to a translation, which is a volume preserving automorphism of $\RR^s$, 
	we can assume that $v_0$ is the origin of $\RR^s$. 
	We observe that $v_1,\dots, v_s$ form a basis of the vector space $\RR^s$. 
	Let $C$ be the cone (with vertex $v_0$) generated by $v_{k+1},\dots, v_s$. 
	We observe that $C\cap H=\{v_0\}$ as $H\cap \mathcal P=\mathcal F$ and $C$ 
	is contained in the cone (with vertex $v_0$) generated by $\mathcal P$.
	Let $Q:= \mathcal F+ C$. Then $\mathcal P\subseteq Q$ and  $H\cap Q=\mathcal F$.
	
	We prove \ref{item:volume_s-1} by induction on $k$. If $k=0$,
	for $\delta$ small enough, $H_\delta\cap\mathcal P=H_\delta\cap C=\delta(H_1\cap C)$.
	Thus $\meas_{s-1}(H_\delta\cap\mathcal P)=\delta^{s-1}\meas_{s-1}(H_1\cap C)$. 
	Assume now that $k\geq 1$, and hence $s\geq 2$.	
	Let $L\subseteq \RR^s$ be the hyperplane that contains $v_1,\dots,v_s$. 
	Then $L^+=L+\RR_{\leq 0} v_1$ is the halfspace with boundary $L$ that contains $v_0$, 
	and $\mathcal P=Q\cap L^+$. Let $L^-=L+\RR_{\geq 0} v_1$. 
	Then 
	\begin{align*}
	\begin{split}
	\label{eq:volume:Qsplit}
	&\meas_{s-1}(H_\delta\cap\mathcal P)
	=\meas_{s-1}(H_\delta\cap Q)-\meas_{s-1}(H_\delta\cap Q\cap L^-)\\
	\end{split}
	\end{align*}
	Let $H_1^+=H_{1}+\RR_{\leq 0}w$ be the half space with boundary $H_1$ that contains $\mathcal F$. 
	Then $H_1^+\cap Q$ is bounded, and for $\delta$ small enough,  
	$H_\delta\cap Q\cap L^-=H_\delta\cap (H_1^+\cap Q\cap L^-)$ 
	and $H_1^+\cap Q\cap L^-$ is an $s$-dimensional polytope that intersects $H$ 
	in the $(k-1)$-dimensional face with vertices $v_1,\dots,v_k$. 
	By induction hypothesis we have 
	\begin{equation*}
	\label{eq:volume:Qcap L^-}
	\meas_{s-1}(H_\delta\cap Q\cap L^-)=O(\delta^{s-k}).
	\end{equation*} 
	We observe that $\meas_{s-1}(H_\delta\cap Q)=\meas_{s-1}((H_\delta\cap Q)-\delta w)$, and
	\[
	(H_\delta\cap Q)-\delta w=H\cap (\mathcal F+C-\delta w)=\mathcal F+(H\cap (C-\delta w))
	\]
	as $\mathcal F\subseteq H$.
	Hence, there is a positive constant $a$ (which is the determinant of the matrix  of a 
	suitable linear change of variables in $H$) such that  
	\[\meas_{s-1}(H_\delta\cap Q)=a\meas_k(\mathcal F)\meas_{s-k-1}(H\cap (C-\delta w)).\]
	We conclude the proof of \ref{item:volume_s-1} for $\mathcal P$ simplex as  
	\[
	\meas_{s-k-1}(H\cap (C-\delta w))
	=\meas_{s-k-1}(H_\delta\cap C)=\delta^{s-k-1}\meas_{s-k-1}(H_1\cap C).
	\]

	For part \ref{item:volume_s-1_kappa}, we observe that 
	$$
	\meas_{s-1}(H_\delta\cap\mathcal P\cap T_{\leq\kappa}) \leq \meas_{s-1}(H_{\delta}\cap Q\cap T_{\leq\kappa}),
	$$
	as $\mathcal P\subseteq Q$.
	 Now
	$$
	Q\cap T_{\leq\kappa}= (\mathcal F+C)\cap T_{\leq\kappa}\subseteq( \mathcal F\cap T_{\leq\kappa}) + (C\cap T_{\leq\kappa}).
	$$	 
	 Indeed, for $f\in \mathcal F$ and $c\in C$ such that $f+c\in T_{\leq\kappa}$, since $\mathcal F, C\subseteq T+\RR_{\geq0}u$ we can write $f=t+\alpha u$ and $c=t'+\alpha'u$ with $t,t'\in T$ and $\alpha,\alpha'\geq 0$. Since $\alpha+\alpha'\leq \kappa$, we get $\alpha,\alpha'\leq\kappa$, thus $f\in\mathcal F\cap T_{\leq\kappa}$ and $c\in C\cap T_{\leq\kappa}$. Thus
	 \begin{multline} \label{eq:volume:decomposition}
	 H_{\delta}\cap Q\cap T_{\leq\kappa}
	 \subseteq H_\delta\cap (( \mathcal F\cap T_{\leq\kappa}) + (C\cap T_{\leq\kappa})) 
	 = ( \mathcal F\cap T_{\leq\kappa}) + (H_\delta\cap C\cap T_{\leq\kappa}).
	  \end{multline}
	  as $\mathcal F\subseteq H$.
	 Thus
	$$
	\meas_{s-1}(H_{\delta}\cap Q\cap T_{\leq\kappa})
	\ll \meas_k(\mathcal F\cap T_{\leq\kappa})
	\meas_{s-1-k}(H_{\delta}\cap C\cap T_{\leq\kappa}),
	$$ 
	where the implicit constant is independent of $\kappa$ and $\delta$.
	Since $\mathcal P\subseteq T+\RR_{\geq0} u$ and $\mathcal P\nsubseteq T$, 	
	there is $j\in\{1,\dots,s\}$ 
	such that $(\RR_{\geq0} v_{j})\cap T_{\leq\kappa}$ is bounded, i.e., 
	$(\RR_{\geq0} v_{j})\cap T_{\leq\kappa}=[0,1]\kappa a_{j}v_{j}$ for some 
	$a_{j}>0$ independent of $\kappa$. 
	Let $$T_{\leq\kappa, j}:=\left \{\sum_{i=1}^s\lambda_i v_i:0\leq \lambda_{j}\leq \kappa a_{j} \right \}.$$

	If $F\not\subseteq T$, we can choose $j\leq k$. Then 
	$\mathcal F\cap T_{\leq\kappa}
	\subseteq \mathcal F\cap T_{\leq\kappa, j}$.
	Since $\mathcal F$ is a simplex (it is a face of a simplex),
	$$\meas_k(\mathcal F\cap T_{\leq\kappa})
	\ll \kappa a_{j}\meas_{k-1}(F_{j})\ll\kappa,$$ 
	where $F_{j}$ is the maximal face of $\mathcal F$ that does not contain $v_{j}$.
	Moreover, $$\meas_{s-1-k}(H_\delta \cap C\cap T_{\leq\kappa})
	\leq \meas_{s-1-k}(H_\delta \cap C)\ll \delta^{s-1-k},$$
	where the implicit constant is independent of $\kappa$ and $\delta$.

	If $F\subseteq T$, then $j\geq k+1$, and $C\cap T_{\leq\kappa} \subseteq C\cap T_{\leq\kappa, j}$. 	
	Hence, \begin{multline*}
	\meas_{s-1-k}(H_\delta\cap C\cap T_{\leq\kappa}) \\
	\leq\delta^{s-1-k}\meas_{s-1-k}(H_1\cap C\cap (\delta^{-1}T_{\leq\kappa,j}))
	\ll  \delta^{s-2-k}\min\{\delta,\kappa\},
	\end{multline*}
	and \ref{item:volume_s-1_kappa} for $\mathcal P$ simplex follows.
	
	For part \ref{item:volume_s-2_delta_kappa}, we observe that 
	$$
	\meas_{s-2}(H_\delta\cap \mathcal P \cap T_\kappa) \leq \meas_{s-2}(H_\delta\cap Q \cap T_\kappa),
	$$
	as $\mathcal P\subseteq Q$.
	We have
	$
	H_\delta\cap Q\cap T_\kappa=H_\delta\cap (\mathcal F+C)\cap T_\kappa.
	$
	
	If $\mathcal F\subseteq T$, and $d>k$. Let $C'=C\cap T$. Up to rearranging the indices $\{k+1,\dots,s\}$, we can assume that $C'$ is the cone (with vertex $v_0$) generated by $v_{k+1},\dots,v_{d}$. Let $C''$ be the cone (with vertex $v_0$) generated by $v_{d+1},\dots,v_s$. Then $C=C'+C''$, and
	 $$H_\delta\cap Q\cap T_\kappa=\mathcal F+(H_\delta\cap (C'+(C''\cap T_\kappa))).$$
	  Thus 
	\begin{align*}
	&\meas_{s-2}(H_\delta\cap Q\cap T_\kappa)
	\ll\meas_{s-k-2}(H_\delta\cap (C'+(C''\cap T_\kappa)))\\
	&\ll \delta^{s-k-2}\meas_{s-k-2}(H_1\cap (C'+(C''\cap T_\frac{\kappa}{\delta})))\\
	&\ll \delta^{s-k-2}\meas_{s-k-2}(p(C''\cap T_\frac{\kappa}{\delta}))
	\ll \delta^{d-k-1}\min\{\delta,\kappa\}^{s-d-1},
	\end{align*}
	where $p$ is the projection onto $H_1$ along $v_{d}$,
	as $$(H_1\cap (C'+(C''\cap T_\frac{\kappa}{\delta})))\subseteq C'\cap H_1+p(C''\cap T_\frac{\kappa}{\delta}).$$ The bound holds also if $d=k$, as $\delta^{-1}\min\{\delta,\kappa\}^{s-k-1}\leq \min\{\delta,\kappa\}^{s-k-2}.$
	
	If $\mathcal F\not\subseteq T$, let $F'=\mathcal F\cap T$. Up to rearranging the indices $\{1,\dots,k\}$ we can assume that the vertices of the simplex $F'$ are $v_0,\dots,v_a$, where $a=\dim F'$. 
	Let $C'$ be the cone (with vertex $v_0$) generated by $v_{a+1},\dots,v_k$.
 	Then $\mathcal F\subseteq F'+C'\subseteq H$, and hence
	$$
	H_\delta\cap Q\cap T_\kappa
	\subseteq (F'+C'+(H_\delta\cap C))\cap T_\kappa 
	\subseteq F'+ (C'+(H_\delta\cap C))\cap T_\kappa,
	$$
	as $F'\subseteq T$. Note that $C'\cap T_\kappa$ is bounded as $v_{a+1},\dots,v_k\notin T$. Thus
	$$
	\meas_{s-2}(H_\delta\cap Q\cap T_\kappa)\ll\meas_{s-2-a}((C'+(H_\delta\cap C))\cap T_\kappa).
	$$
	Then
	\begin{align*}
	&\meas_{s-2-a}((C'+(H_\delta\cap C))\cap T_\kappa)\\
	&\ll \kappa^{s-2-a}\meas_{s-2-a}\left(\left(C'+(H_{\frac \delta \kappa}\cap C)\right)\cap T_1\right)\\
	 &\ll \kappa^{s-2-a}\meas_{s-k-1}(p(H_{\frac \delta \kappa}\cap C))
	 \ll \kappa^{k-a-1}\delta^{s-k-1},
	\end{align*}
	where $p$ is the projection onto $T_1$ along $v_{a+1}$,
	as $$\left(C'+(H_{\frac \delta \kappa}\cap C)\right)\cap T_1\subseteq C'\cap T_1+p(H_{\frac \delta \kappa}\cap C).$$

Now we turn to part \ref{item:volume_proj}. Up to translating $V$ along an orthogonal direction, we can assume that $v_0\in V$. We denote by $V^{\perp}$ the affine space through $v_0$ orthogonal to $V$.	
	If $p_V|_{H_\delta}$ is not surjective, then $\dim p_V(H_\delta\cap\mathcal P\cap T_{\leq\kappa})<m$ and hence $\meas_mp_V(H_\delta\cap\mathcal P\cap T_{\leq \kappa})=0$. Thus we can assume without loss of generality that $p_V|_{H_\delta}: H_\delta\to V$ is surjective, i.e., $V^{\perp}\nsubseteq H$. 
	By \eqref{eq:volume:decomposition} 
	$$
	 H_\delta\cap\mathcal P\cap T_{\leq \kappa}\subseteq (\mathcal F\cap T_{\leq\kappa})+(H_\delta\cap C).
	$$	
	If $\mathcal F\nsubseteq V^{\perp}$, there exists $j\in\{1,\dots,k\}$ such that $v_{j}\notin V^{\perp}$. Without loss of generality we can assume that $j=1$. 
	 If $\mathcal F\nsubseteq T$ there exists $j'\in\{1,\dots,k\}$ such that $\RR_{\geq 0}v_{j'}\cap T_{\leq \kappa}$ is bounded. Then $v_1+a v_{j'}\notin T\cup V^{\perp}$ for some $a\in\RR$ positive, and hence $\RR_{\geq 0}(v_1+av_{j'})\cap T_{\leq \kappa}=[0,1]\kappa b v$ for some $b\in\RR_{> 0}$ and $p_V|_{\RR (v_1+bv_{j'})}$ is injective.
	Let $$ 
	v=\begin{cases} 
	\kappa(v_1+av_{j'}) & \text{ if } \mathcal F\nsubseteq T \text{ and } \mathcal F\nsubseteq V^{\perp}, \\ 
	v_1 & \text{ if }\mathcal F\subseteq T \text{ or } \mathcal F\subseteq V^{\perp}.
	\end{cases}
	$$
	Then $v, v_2,\dots,v_s$ is a basis of $\RR^s$. Let $A_1,\dots,A_k\in\RR$ be positive constants such that 
	\begin{equation*}\label{eq:volume:projF}
	\mathcal F\cap T_{\leq\kappa}\subseteq [- A_1, A_1] v\times \prod_{i=2}^k[-A_i, A_i]v_i.
	\end{equation*}
	For $i\in\{k+1,\dots,s\}$, let $a_i\in\RR_{>0}$ such that $\RR v_i\cap H_1=a_iv_i$. Let $A_{k+1},\dots,A_{s-1}\in\RR$ be positive constants such that $H_1\cap C\subseteq \prod_{i=k+1}^{s-1}[-A_i, A_i](a_iv_i-a_sv_s)$.
	Since $H_{\delta}\cap C=\delta(H_1\cap C)$ we have 
	$$H_{\delta}\cap C\subseteq \prod_{i=k+1}^{s-1}[-\delta A_i, \delta A_i](a_iv_i-a_sv_s).$$ 
	
	If $C\nsubseteq V$, we can assume without loss of generality that $e_s\notin V$. Then there is $e\in V^{\perp}$ such that $v, v_2,\dots,v_{s-1},e$ is a basis of $\RR^s$.
	Let $A_s\in\RR$ be a positive constant such that 
	\begin{equation*}
	(F\cap T_{\leq\kappa})+(H_\delta\cap C)\subseteq [- A_1,  A_1] v\times \prod_{i=2}^{k}[-A_i, A_i]v_i\times \prod_{i=k+1}^{s-1}[-\delta A_i, \delta A_i]v_i\times [-A_s, A_s]e.
	\end{equation*}
	Let $e_{m+1},\dots, e_s$ be a basis of $V^{\perp}$ with $e_s=e$. Then there is a set $I\subseteq\{k+1,\dots,s-1\}$ of cardinality at least $\dim (H_\delta\cap C)-(\dim V^{\perp}-1)=m-k$ such that $\{v_i:i\in I\}\cup\{e_{m+1},\dots,e_s\}$ is a set of linearly independent vectors, and $\{v_i:i\in I\}\cup\{v, e_{m+1},\dots,e_s\}$ is a set of linearly independent vectors if $\mathcal F\nsubseteq V^{\perp}$.
	Without loss of generality we can assume that $I\supseteq\{k+1,\dots,m\}$. Let $e_i=p_V(v_i)$ for $i\in\{k+1,\dots,m\}$ and complete to a basis $e_1,\dots,e_m$ of $V$ with $e_1=p_V(v)$ if $\mathcal F\nsubseteq V^{\perp}$.
	Then there are positive constants $B_1,\dots,B_m\in \RR$ such that 
	\begin{equation*}\label{eq:volume:CnotinV}
	p_V(H_\delta\cap\mathcal P\cap T_{\leq\kappa})\subseteq \prod_{i=1}^{k}[-B_i, B_i]e_i\times \prod_{i=k+1}^{m}[-\delta B_i, \delta B_i]e_i.
	\end{equation*} 
	Recalling the definition of $v$, we conclude that if $C\nsubseteq V$
	\begin{equation}\label{eq:volume:projsimplex}
	\meas_m(p_V(H_\delta\cap\mathcal P\cap T_{\leq\kappa}))\ll\begin{cases}\kappa\delta^{m-k} & \text{ if } \mathcal F\nsubseteq T \text{ and } \mathcal F\nsubseteq V^{\perp} \\
	\delta^{m-k} & \text{ otherwise.} 	
	\end{cases}
	\end{equation}
	If $C\subseteq V$, then $m\geq s-k$ and $p_V(H_\delta\cap C)=H_\delta\cap C$. For $i\in\{2,\dots,s-k\}$, let $e_i= a_{s-i+1}v_{s-i+1}-a_sv_s$, and complete to a basis $e_1,\dots,e_m$ of $V$, with $e_1=p_V(v)$ if $\mathcal F\nsubseteq V^{\perp}$. Then there are positive constants $B_1,\dots,B_m\in\RR$ such that 
	\begin{gather*}\label{eq:volume:CinV}
	p_V(H_\delta\cap\mathcal P\cap T_{\leq\kappa})\subseteq [- B_i, B_i]e_1 \times \prod_{i=2}^{s-k}[-\delta B_i, \delta B_i]e_i\times \prod_{i=s-k+1}^{m}[- B_i, B_i]e_i.
	\end{gather*}
	Since $m\leq s-1$, we have that $m-k\leq s-k-1$. Thus,
	recalling the definition of $v$, we conclude that for $\delta < 1$ the bounds \eqref{eq:volume:projsimplex} hold also if $C\subseteq V$.
			
	Thus we proved \ref{item:volume_s-1}, \ref{item:volume_s-1_kappa}, \ref{item:volume_s-2_delta_kappa}, and \ref{item:volume_proj} for $\mathcal P$ simplex.
	
	\emph{Step 2.} We complete the proof of \ref{item:volume_s-1}, \ref{item:volume_s-1_kappa}, and \ref{item:volume_s-2_delta_kappa}. If $\mathcal P$ is not a simplex, 
	let $\mathcal P_1,\dots,\mathcal P_N\subseteq\RR^s$ be simplices of dimension $s$ 
	such that $\mathcal P=\bigcup_{i=1}^N\mathcal P_i$ is a triangulation of $\mathcal P$. 
	Then 
	\begin{gather}	
	\meas_{s-1}(H_\delta\cap\mathcal P)=\sum_{i=1}^N\meas_{s-1}(H_\delta\cap\mathcal P_i),\\
	\meas_{s-1}(H_\delta\cap\mathcal P\cap T_{\leq\kappa})
	=\sum_{i=1}^N\meas_{s-1}(H_\delta\cap\mathcal P_i\cap T_{\leq\kappa}),\\
	\meas_{s-2} ( H_\delta \cap \mathcal P \cap T_{\kappa})
	=\sum_{i=1}^N\meas_{s-2} ( H_\delta \cap \mathcal P_i \cap T_{\kappa}),\\
	\meas_m (p_V(H_\delta\cap \mathcal P\cap T_{\leq\kappa}))\leq\sum_{i=1}^N\meas_m (p_V(H_\delta\cap \mathcal P_i\cap T_{\leq\kappa})).
	\end{gather}

	Since $\bigcup_{i=1}^N(\mathcal P_i\cap\mathcal F)=\mathcal F$, 
	there is at least one index $i\in\{1,\dots,N\}$ such that $\dim(\mathcal P_i\cap\mathcal F)=k$. 
	For $\delta$ small enough, we have $H_\delta\cap\mathcal P_i\neq\emptyset$ if and only if 
	$\mathcal P_i\cap\mathcal F\neq\emptyset$. 
	Therefore, parts \ref{item:volume_s-1}, \ref{item:volume_s-1_kappa} 
	and \ref{item:volume_s-2_delta_kappa} follow from Step 1. If $\delta\leq\kappa$, \ref{item:volume_proj} follows from Step 1.
	
	\emph{Step 3.} It remains to prove \ref{item:volume_boundary} and \ref{item:volume_boundary_kappa}.
	Let $\mathcal F_1,\dots, \mathcal F_M$ be the faces of $\mathcal P$. Then 
	\begin{gather}
	\partial (H_{\del}\cap \calP)
	=\bigcup_{i=1}^M (H_{\del}\cap \mathcal F_i),\\
	\partial (H_{\del}\cap \calP\cap T_{\leq\kappa})
	=\bigcup_{i=1}^M (H_{\del}\cap \mathcal F_i\cap T_{\leq\kappa})
	\cup (H_{\del}\cap \calP\cap T)
	\cup (H_{\del}\cap \calP\cap (T_{\kappa})).
	\end{gather}
	
	If $k=0$, then $\mathcal F$ is a vertex of $\mathcal P$. 
	Up to a translation, which is a volume preserving automorphism of $\RR^s$, 
	we can assume that $\mathcal F$ is the origin of $\RR^s$. 
	We denote by $C$ the cone with vertex $\mathcal F$ generated by $\mathcal P$. 
	Then, for $\delta$ small enough, 
	$\partial(H_\delta\cap\mathcal P)=\delta\partial(H_1\cap Q)$,
	and $\meas_{s-2}(\partial(H_\delta\cap\mathcal P))=\delta^{s-2}\meas_{s-2} (\partial(H_1\cap Q))$.
	 If $s=2$, part \ref{item:volume_boundary} holds. Hence, it remains to prove it 
	 for $s\geq3$ and $k\geq1$. 
	Let $\mathcal F_1,\dots,\mathcal F_M$ be the faces of $\mathcal P$, 
	then $\partial (H_\delta\cap\mathcal P)=\bigcup_{i=1}^M(H_\delta\cap\mathcal F_i)$. 
	For $\delta$ small enough, we have $H_\delta\cap\mathcal F_i\neq\emptyset$ 
	if and only if $\mathcal F_i\cap\mathcal F\neq\emptyset$ and $\mathcal F_i\nsubseteq\mathcal F$. 
	Moreover, for $\delta$ small enough we can assume that $H_\delta$ 
	does not contain any vertex of $\mathcal P$. 
	Since $\meas_{s-2}(H_\delta\cap\mathcal F_i)=0$ whenever $H_\delta\cap\mathcal F_i$ 
	has dimension strictly smaller than $s-2$, we have 
	$\meas_{s-2}(\partial(H_\delta\cap\mathcal P))=\sum_{i=1}^M\meas_{s-2}(H_\delta\cap\mathcal F_i)$, 
	where the sum actually runs over the maximal faces of $\mathcal P$ that intersect $\mathcal F$. 
	Let $\widetilde {\mathcal F}$ be an $(s-1)$-dimensional face of $\mathcal P$ that intersects 
	$\mathcal F$ such that $\widetilde {\mathcal F}\neq\mathcal F$, 
	and let $\tilde k=\dim(\widetilde{\mathcal F}\cap\mathcal F)$. 
	By  part \ref{item:volume_s-1} applied replacing $\mathcal P$ by $\widetilde{\mathcal F}$, we have 
	$\meas_{s-2}(H_\delta\cap\widetilde{\mathcal F})
	=\widetilde c \delta^{s-2-\tilde k}+O(\delta^{s-1-\widetilde k})$.
	If $k\leq s-2$, there is an $(s-1)$-dimensional face $\widetilde {\mathcal F}$ of $\mathcal P$ 
	that contains $\mathcal F$, hence $\tilde k=k$, and we conclude.
	If $k=s-1$, then there is an $(s-1)$-dimensional face $\widetilde {\mathcal F}$ 
	of $\mathcal P$ such that $\tilde k=s-2$, hence 
	$\meas_{s-2} (\partial( H_\delta \cap \mathcal P))= c +O(\delta)$.

	For $\delta$ and $\kappa$ small enough, we have $H_{\del}\cap \mathcal F_i\cap T_{\leq\kappa}\neq\emptyset$ 
	 only if $H\cap \mathcal F_i\cap T\neq\emptyset$ and $\mathcal F_i\nsubseteq \mathcal F$. 
	Moreover, for $\delta$ and $\kappa$ small enough we can assume that $H_{\del}$ does not contain any 
	vertex of $\mathcal P$ and $T_{\leq\kappa}$ does not contain any vertex of $\mathcal P$ not in $\mathcal P\cap T$. 
	Since $\meas_{s-2}(H_\delta\cap\mathcal F_i \cap \mathcal T_{\leq\kappa})=0$ whenever 
	$H_\delta\cap\mathcal F_i\cap T_{\leq\kappa}$ has dimension strictly smaller than $s-2$, we have 
	$$\meas_{s-2}(\partial (H_{\del}\cap \calP\cap T_{\leq\kappa}))
	=\sum_{i=1}^M\meas_{s-2}(H_\delta\cap\mathcal F_i\cap T_{\leq\kappa})
	+\meas_{s-2}(H_{\del}\cap \calP\cap T_\kappa),$$ 
	where the sum actually runs over the maximal faces of $\mathcal P$ that intersect $\mathcal F\cap T$. 
	
	Let $\widetilde{\mathcal F}$ be an $(s-1)$-dimensional face of $\mathcal P$ that intersects $\mathcal F\cap T$
	such that $\widetilde{\mathcal F}\neq\mathcal F$, and let 
	$\tilde k=\dim(\widetilde{\mathcal F}\cap\mathcal F)$. By part \ref{item:volume_s-1_kappa} and \ref{item:volume_s-1} applied replacing 
	$\mathcal P$ by $\widetilde{\mathcal F}$, we have
	$$
	\meas_{s-2}(H_\delta\cap\widetilde{\mathcal F}\cap T_{\leq\kappa})
	\ll \begin{cases}
	\kappa\delta^{s-2-\tilde k} & \text{ if }\mathcal F\cap\widetilde{\mathcal F}\nsubseteq T,\\
	\min\{\kappa,\delta\}\delta^{s-3-\tilde k} &\text{ if } \mathcal F\cap\widetilde{\mathcal F}\subseteq T \text{ and } \widetilde{\mathcal F}\nsubseteq T,\\
	\delta^{s-2-\widetilde k} & \text{ if } \widetilde{\mathcal F}\subseteq T.
	\end{cases}
	$$
	If $k\leq s-2$, there is an $(s-1)$-dimensional face $\widetilde{\mathcal F}$ of $\mathcal P$ that contains $\mathcal F$, hence $\tilde k=k$ and we get 
	$$
	\meas_{s-2} \partial (H_{\del}\cap \widetilde{\mathcal F}\cap T_{\leq\kappa}) \ll \kappa \del^{s-2-k} 
	$$
	whenever $\widetilde{\mathcal F}\cap\mathcal F\nsubseteq T$, which is the case if $\mathcal F\nsubseteq T$.
	Then part \ref{item:volume_boundary_kappa} follows by part \ref{item:volume_s-2_delta_kappa}.	
\end{proof}

In the next section we apply the proposition above to the polytope $\mathcal P$  in Theorem \ref{lemhyp3} with $H=\{\sum_{i=1}^s t_i=a\}$, $H_\delta=\{\sum_{i=1}^s t_i=a-\delta\}$, $T=\{t_{i_0}=0\}$,  $T_{\leq\kappa}=\{0\leq t_{i_0}\leq\kappa\}$ for given $i_0\in\{1,\dots,s\}$, and $\mathcal P_{i_0,\kappa}=\mathcal P\cap T_{\leq\kappa}$. Since $\mathcal P$ is a full dimensional polytope, 
$$\meas_{s-1} ( H_\delta \cap \mathcal P)= 
	c_\calP \delta^{s-1-k}+O(\delta^{s-k})$$
	for $\delta$ sufficiently small, where $c_\calP$ is a positive constant depending only on $\mathcal P$ and $H$.
	Since the face $F=H\cap\mathcal P$ is not contained in any coordinate hyperplane of $\RR^s$, we have $F\nsubseteq T$ and hence
	$$
	\meas_{s-1} ( H_\delta \cap \mathcal P \cap T_{\leq \kappa}) \ll 
 	\kappa \delta^{s-1-k}, 
	$$
	and for $V$ any coordinate subspace of dimension $m\geq k$, 
	$$
	\meas_mp_V(H_\delta\cap\mathcal P\cap T_{\leq \kappa})\ll
	\kappa \delta^{m-k},
	$$
	Notice that if $m=k$, then the volume of the projection is independent of the choice of $\delta$, hence holds also for $\delta=0$.
	
\section{Hyperbola method}
\label{sec:hyperbola_method}

We consider a function $f: \N^s\rightarrow \R_{\geq 0}$ with the following property.\\

{\em Property I}: Assume that there are non-negative real constants $C_{f,M}\leq C_{f,E}$ and $\vardel>0$ and $\varpi_i>0$, $1\leq i\leq s$ such that for all $B_1,\ldots, B_s\in \R_{\geq 1}$ we have
$$\sum_{1\leq y_i\leq B_i,\ 1\leq i\leq s} f(\bfy)= C_{f,M} \prod_{i=1}^s B_i^{\varpi_i} +O\left(C_{f,E}  \prod_{i=1}^s B_i^{\varpi_i} \left(\min_{1\leq i\leq s} B_i\right)^{-\vardel}\right)$$
where the implied constant is independent of $f$.\\

Let $B$ be a large real parameter, $\calK$ a finite index set and $s\in \N$. Let $\alp_{i,k}\geq 0$ for $1\leq i\leq s$ and $k\in \calK$.

Our goal is to evaluate the sum (if finite)
$$S^{f}(B)=\sum_{\substack{\prod_{i=1}^s y_i^{\alp_{i,k}}\leq B,\ \forall k\in \calK\\ y_i\in \N, 1\leq i\leq s }}f(\bfy).$$

We start with a heuristic for the expected growth of the sum $S^f(B)$. Let $B$ be a large real parameter. Consider the contribution to the sum $S^f(B)$ from a dyadic box where each $y_i\sim B^{t_i\varpi_i^{-1}}$ say for real parameters $t_i\geq 0$ (for example we could think of $\tfrac{1}{2} B^{t_i\varpi_i^{-1}}\leq y_i\leq B^{t_i\varpi_i^{-1}}$, $1\leq i\leq s$). Such a box is expected to contribute about
$$\sum_{y_i\sim B^{t_i\varpi_i^{-1}}, 1\leq i\leq s} f(\bfy) \sim B^{\sum_{i=1}^s t_i}$$
to the sum $S^f(B)$. 
In order for such a box to lie in the summation range we roughly speaking need
\begin{equation}\label{eqn1a}
\sum_{i=1}^s \alp_{i,k} \varpi_i^{-1} t_i\leq 1, \quad k\in \calK
\end{equation}
and
\begin{equation}\label{eqn1b}
t_i\geq 0,\quad 1\leq i\leq s.
\end{equation}
The system of equations (\ref{eqn1a}) and (\ref{eqn1b}) defines a polyhedron $\calP \subset \R^s$. We make the following two assumptions on $\calP$.\\

\begin{assumption}\label{assp10}
We assume that $\calP$ is bounded and non-degenerate in the sense that it is not contained in a $s-1$ dimensional subspace of $\R^s$. 
\end{assumption}

\begin{assumption}\label{assp11}
The face $F$ on which the function $\sum_{i=1}^s t_i$ takes its maximum on $\calP$ is not contained in a coordinate hyperplane of $\R^s$ (with coordinates $t_i$). 
\end{assumption}

The linear function
$$\sum_{i=1}^s t_i $$
takes its maximum on a face of $\calP$. We call the maximal value $a$ and assume that this maximum is obtained on a $k$-dimensional face of $\calP$.\par

Let $H_{\del}$ be the hypersurface given by
$$\sum_{i=1}^s t_i=a-\del.$$
It comes equipped with an $s-1$ dimensional measure which is obtained from the pull-back of the standard Lebesgue measure to any of its coordinate plane projections. In the following we write $\meas$ for this measure. 
Note that Proposition \ref{prop:volume_section_polytope}\ref{item:volume_s-1} shows that Assumption \ref{assp10} implies that the following holds.
\\

There is a constant $c_{\calP}$ such that for $\del >0$ sufficiently small (in terms of $\calP$) we have
\begin{equation}\label{eqn65}
|\meas_{s-1} (H_{\del} \cap \calP)-c_{\calP}\del^{s-1-k}|\leq C\del^{s-k},
\end{equation}
for a sufficiently large constant $C$, depending only on $\calP$.\\

Let $\bfl\in \Z_{\geq 0}^s$ and $1<\tet \leq2$ a parameter to be chosen later. We define box counting functions

$$B_f(\bfl,\tet)= \sum_{\substack{y_i\in [\tet_i^{l_i},\tet_i^{l_i+1}), 1\leq i\leq s}} f(\bfy),$$
where 
$$\tet_i=\tet^{\varpi_i^{-1}},\quad 1\leq i\leq s.$$
Assume that $f$ satisfies Property I. Then by inclusion-exclusion we evaluate
\begin{equation*}
\begin{split}
B_f(\bfl,\tet)&=  C_{f,M} \prod_{i=1}^s \left(\tet_i^{\varpi_i (l_i+1)} - \tet_i^{\varpi_i l_i}\right)+O\left(C_{f,E} \prod_{i=1}^s \tet_i^{l_i\varpi_i} \left(\min_i  \tet_i^{l_i}\right)^{-\vardel}\right)\\
&=  C_{f,M} (\tet-1)^s \prod_{i=1}^s \tet^{l_i}+O\left(C_{f,E} \prod_{i=1}^s \tet^{l_i} \left(\min_i  \tet_i^{l_i}\right)^{-\vardel}\right).
\end{split}
\end{equation*}
We deduce that
\begin{gather*}
B_f(\bfl,\tet)= C_{f,M} (\tet-1)^s \tet^{\sum_{i=1}^s l_i}+O\left(C_{f,E} \tet^{\sum_{i=1}^s l_i} \left(\min_i  \tet^{\varpi_i^{-1}l_i}\right)^{-\vardel}\right).
\end{gather*}

Recall that we assumed in Property I that
\begin{equation*}
C_{f,M}\leq C_{f,E}.
\end{equation*}
Hence we have 
\begin{equation}\label{eqn22}
B_f(\bfl,\tet)\ll C_{f,E} \tet^{\sum_{i=1}^s l_i}.
\end{equation}
We note that the sum
$$E_1^f:=  \sum_{\substack{\prod_{i=1}^s y_i^{\alp_{i,k}}\leq B,\ k\in \calK \\ y_i\in \N,\ 1\leq i\leq s\\ \prod_{i=1}^s y_i^{\varpi_i}>B^{a}}}f(\bfy)$$
is empty.\par
Let $\tilde{A}$ be a large natural number, which we view as a parameter to be specified later. 

We set
$$S_{1,f}:=  \sum_{\substack{\prod_{i=1}^s y_i^{\alp_{i,k}}\leq B,\ k\in \calK\\ y_i\in \N,\ 1\leq i\leq s\\ y_i\geq (\log B)^{\tilde{A}} \forall 1\leq i\leq s}}f(\bfy),$$
and note that
$$S_{1,f}=  \sum_{\substack{\prod_{i=1}^s y_i^{\alp_{i,k}}\leq B,\ k\in \calK\\ y_i\in \N,\ 1\leq i\leq s\\ \prod_i y_i^{\varpi_i}\leq B^{a}\\ y_i\geq (\log B)^{\tilde{A}} \forall 1\leq i\leq s}}f(\bfy).$$

We now cover the sum $S_{1,f}$ with boxes of the form $B_f(\bfl,\tet)$. Let $\calL^+$ be the set of $\bfl\in \Z_{\geq 0}^s$ such that the following inequalities hold
\begin{gather*}
\sum_{i=1}^s \alp_{i,k} \varpi_i^{-1}l_i\leq \frac{\log B}{\log \tet},\quad k\in \calK\\
l_i+1\geq \frac{\varpi_i \tilde{A} \log \log B}{\log \tet},\quad 1\leq i\leq s.
\end{gather*}
Similarly, let $\widetilde{\calL}^-$ be the set of $\bfl\in \Z_{\geq 0}^s$ such that the following inequalities hold
\begin{gather*}
\sum_{i=1}^s \alp_{i,k} \varpi_i^{-1}(l_i +1)\leq \frac{\log B}{\log \tet},\quad k\in \calK\\
l_i\geq \frac{\varpi_i \tilde{A} \log \log B}{\log \tet},\quad 1\leq i\leq s.
\end{gather*}

Let $C_5$ be a positive constant such that
$$\sum_{i=1}^s \alp_{i,k} \varpi_i^{-1}\leq C_5, \quad k\in \calK.$$
We define $\calL^-$ to be the set of $\bfl\in \Z_{\geq 0}^s$ such that the following inequalities hold
\begin{gather*}
\sum_{i=1}^s \alp_{i,k} \varpi_i^{-1}l_i \leq \frac{\log B}{\log \tet} - C_5, \quad k\in \calK\\
l_i\geq \frac{\varpi_i \tilde{A} \log \log B}{\log \tet},\quad 1\leq i\leq s.
\end{gather*}

Then we have

$$\sum_{\bfl\in \calL^-}B_f(\bfl,\tet) = S_{1,f}^{-}\leq S_{1,f}\leq S_{1,f}^+ = \sum_{\bfl \in \calL^+}B_f(\bfl,\tet),$$
where we read the last line as a definition for $S_{1,f}^-$ and $S_{1,f}^+$.
Note that the coverings into boxes do not depend on the function $f$ but only on the summation conditions on the variables $y_i$, $1\leq i\leq s$.\par
Let $r^+(l)$ (resp. $r^-(l)$) be the set of $\bfl\in \calL^+$ (resp. $\calL^-$) such that 
$$\sum_{i=1}^s l_i=l.$$
We recall that
\begin{gather*}
B_f(\bfl,\tet)= C_{f,M} (\tet-1)^s \tet^{\sum_{i=1}^s l_i}+O\left(C_{f,E} \tet^{\sum_{i=1}^s l_i} \left(\min_i  \tet^{\varpi_i^{-1}l_i}\right)^{-\vardel}\right).
\end{gather*}
This leads to
\begin{gather*}
\sum_{\bfl\in \calL^-}B_f(\bfl,\tet)=C_{f,M} \sum_{\bfl\in \calL^-} \tet^{\sum_{i=1}^s l_i} (\tet-1)^s 
+O\left( C_{f,E}  \sum_{\bfl\in \calL^-}\tet^{\sum_{i=1}^s l_i}  (\log B)^{-\vardel \tilde{A}}\right)
\\ = (\tet -1)^s C_{f,M} \sum_{l\leq a \log B/\log \tet} r^-(l) \tet^l 
+O\left( C_{f,E}  \sum_{\bfl\in \calL^-}\tet^{\sum_{i=1}^s l_i}  (\log B)^{-\vardel \tilde{A}}\right)
\end{gather*}
Every vector $\bfl\in \calL^-$ satisfies the bound
$$\sum_{i=1}^s l_i \leq a \frac{\log B}{\log \tet}, $$
and hence
$$l_i  \leq a \frac{\log B}{\log \tet} \quad 1\leq i\leq s.$$
This leads to the bound
$$ \sum_{\bfl\in \calL^-}\tet^{\sum_{i=1}^s l_i}\ll \left(\frac{\log B}{\log \tet}\right)^s B^{a}.$$
We deduce that
\begin{gather*}
\sum_{\bfl\in \calL^-}B_f(\bfl,\tet)=(\tet -1)^s C_{f,M} \sum_{l\leq a \log B/\log \tet} r^-(l) \tet^l  \\
+O\left( C_{f,E}  \left(\frac{\log B}{\log \tet}\right)^s B^{a} (\log B)^{-\vardel \tilde{A}}\right).
\end{gather*}

Let $\tilde{r}(l)$ be the number of $\bfl\in \Z_{\geq 0}^s$ such that $\sum_{i=1}^s l_i=l$ and the following inequalities hold
\begin{gather*}
\sum_{i=1}^s \alp_{i,k} \varpi_i^{-1}l_i \leq \frac{\log B}{\log \tet}- C_5 ,\quad k\in \calK.
\end{gather*}
Note that $\tilde{r}(l)$ is the number of lattice points in the polytope given by 
\begin{gather*}
\sum_{i=1}^s t_i=l,\\
\sum_{i=1}^s \alp_{i,k} \varpi_i^{-1}t_i \leq \frac{\log B}{\log \tet} - C_5,\quad k\in \calK\\
t_i\geq 0,\quad 1\leq i\leq s.
\end{gather*}

Finally, for $1\leq i_0\leq s$ let $r_{\tilde{A}, i_0}(l)$ be the number of $\bfl\in \Z_{\geq 0}^s$ such that $\sum_{i=1}^s l_i=l$ and 
$$l_{i_0} \leq \frac{\varpi_i \tilde{A} \log \log B}{\log \tet},$$
and
\begin{gather*}
\sum_{i=1}^s \alp_{i,k} \varpi_i^{-1}l_i \leq \frac{\log B}{\log \tet},\quad k\in \calK.
\end{gather*}

Note that we have
$$r^-(l)=\tilde{r}(l)+O\left(\sum_{1\leq i_0\leq s} r_{\tilde{A},i_0}(l)\right).$$
We now stop a moment to introduce some more auxiliary polytopes. We recall that $\calP\subset \R^s$ is the polytope given by the system of equations (\ref{eqn1a}) and (\ref{eqn1b}).\par
For $1\leq i_0\leq s$ and $\kap>0$ we introduce the polytope $\calP_{i_0,\kap}$ given by the system of equations
\begin{equation*}
\sum_{i=1}^s \alp_{i,k} \varpi_i^{-1} t_i\leq 1,\quad k\in \calK
\end{equation*}
and
\begin{equation*}
t_i\geq 0,\quad 1\leq i\leq s,
\end{equation*}
and 
\begin{equation*}
t_{i_0}\leq \kap.
\end{equation*}
I.e. $\calP_{i_0,\kap}$ is obtained from intersecting $\calP$ with the halfspace $t_{i_0}\leq \kap$. Let $H_{\del}$ be defined as before, i.e. the hyperplane given by
$$\sum_{i=1}^s t_i= a-\del.$$

By Proposition \ref{prop:volume_section_polytope}\ref{item:volume_s-1_kappa} and under the Assumption of \ref{assp10} and \ref{assp11} we have the following property.\par
Let $0\leq k\leq s-1$. Assume that $\kappa>0$ and $\del>0$ are sufficiently small in terms of the data describing $\calP$. Then we have
\begin{equation}\label{eqn65a}
\meas (H_{\del}\cap \calP_{i_0,\kap}) \ll \kap\del^{s-1-k}.
\end{equation}

\begin{remark}
Note that in the case $k=0$ and where the maximal face $F$ is not contained in a coordinate hyperplane, the intersection $H_{\del}\cap \calP_{i_0,\kap}$ is empty for $\eps$ and $\del$ sufficiently small.\end{remark}

In our applications it will take $\kap$ of size $\kap\ll \frac{\log \log B}{\log B}$. We next evaluate the function $r^-(l)$ asymptotically.\par

\begin{lemma}\label{rmin}
Assume that $0\leq k\leq s-1$. Assume that Assumption \ref{assp10} and Assumption \ref{assp11} hold. 
Let $l$ be an integer with
$$(a-\del) \left(\frac{\log B}{\log \tet}-C_5\right) \leq l\leq a \left(\frac{\log B}{\log \tet}-C_5\right) ,$$
for $\del$ sufficiently small, depending only on $\calP$, as in Proposition \ref{prop:volume_section_polytope}\ref{item:volume_s-1},\ref{item:volume_s-1_kappa}. \color{black}

Then we have
\begin{gather*}
r^-(l)=c_{\calP} \left(\frac{\log B}{\log \tet}\right)^{s-1} \left( a- \frac{\log \tet}{\log B} l\right)^{s-1-k} \\
+O\left( \left(\frac{\log B}{\log \tet}\right)^{s-2}+ \left(\frac{\log B}{\log \tet}\right)^{s-1} \left( a- \frac{\log \tet}{\log B} l\right)^{s-k}\right)\\
+ O\left(   \frac{\log \log B}{\log B} \left(\frac{\log B}{\log \tet}\right)^{s-1} \left( a- \frac{\log \tet}{\log B} l\right)^{s-1-k}  \right) .
\end{gather*}
Here we read $0^0=1$.\par
If 
$$l>  a \left(\frac{\log B}{\log \tet}-C_5\right) ,$$
then we have $r^-(l)=0$. 
\end{lemma}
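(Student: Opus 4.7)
The plan is to combine the decomposition $r^-(l) = \tilde r(l) + O(\sum_{i_0}r_{\tilde A, i_0}(l))$ already recorded just before the statement with a lattice-point counting argument in hyperplane slices of convex polytopes, appealing to Assumption \ref{assp5} (equivalently Proposition \ref{prop:volume_section_polytope}\ref{item:volume_s-1},\ref{item:volume_boundary}) for $\tilde r(l)$ and to Assumption \ref{assp7} (equivalently Proposition \ref{prop:volume_section_polytope}\ref{item:volume_s-1_kappa}) for the boundary pieces $r_{\tilde A, i_0}(l)$.

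For the main term $\tilde r(l)$, I rescale by setting $t_i := (\log\tet/\log B)l_i$ and write $N := \log B/\log\tet$, $\del := a - l/N$, and $\calP^- := (1-C_5/N)\calP$, the polytope with $C_5$-shrunken constraints. Under this rescaling $\tilde r(l)$ equals the number of points of the lattice $N^{-1}\Z^s$ lying in the bounded $(s-1)$-dimensional slice $H_{a-\del}\cap\calP^-$, where $\meas_{s-1}$ is taken to be the Lebesgue pullback under coordinate projection, in line with Assumption \ref{assp5}. The classical Davenport--Lipschitz lattice-point counting principle then gives
\[
\tilde r(l) = N^{s-1}\meas_{s-1}(H_{a-\del}\cap\calP^-) + O\bigl(N^{s-2}\meas_{s-2}(\partial(H_{a-\del}\cap\calP^-))\bigr).
\]
Writing $\meas_{s-1}(H_{a-\del}\cap\calP^-) = (1-C_5/N)^{s-1}\meas_{s-1}(H_{a-\tilde\del}\cap\calP)$ with $\tilde\del := (\del - aC_5/N)/(1-C_5/N)$, I apply Assumption \ref{assp5} to $\calP$ at parameter $\tilde\del$. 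Expanding the resulting $O(1/N)$ perturbations yields $c_P \del^{s-1-k} + O(\del^{s-k}) + O(1/N)$ for the volume and $O(1)$ for the boundary measure. Multiplying by $N^{s-1}$ recovers the main term $c_P N^{s-1}\del^{s-1-k}$ together with the error terms $O(N^{s-1}\del^{s-k})$ and $O(N^{s-2})$ of the lemma.

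For the boundary contribution, the same rescaling identifies $r_{\tilde A, i_0}(l)$ with the count of $N^{-1}\Z^s$-points in $H_{a-\del}\cap\calP\cap\{t_{i_0}\leq\kap\}$ with $\kap := \varpi_{i_0}\tilde A \log\log B/\log B$. Since Assumption \ref{assp11} guarantees that the maximal face $F$ is not contained in a coordinate hyperplane, Assumption \ref{assp7} applies with this choice of $\kap$, bounding the slice volume by $O(\kap\del^{s-1-k})$ and the boundary measure by $O(1)$. Summing over $1\leq i_0\leq s$ and multiplying by $N^{s-1}$ produces the third error term $O((\log\log B/\log B)N^{s-1}\del^{s-1-k})$. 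For the vanishing statement, observe that the linear program maximizing $\sum_i l_i$ on $\calL^-$ is a rescaled copy of the one maximizing $\sum_i t_i$ on $\calP^- = (1-C_5/N)\calP$, whose value is $a(1-C_5/N)N = a(N-C_5)$; thus if $l > a(N-C_5)$ no admissible $\bfl$ exists and $r^-(l)=0$.

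The main obstacle is controlling the $O(1/N)$ perturbation arising from the $C_5$-shrinking $\calP \mapsto \calP^-$: the induced shift in $\del$ and the $(1-C_5/N)^{s-1}$ rescaling of $c_P$ must be shown, after multiplication by $N^{s-1}$, to fit inside the stated $O(N^{s-2})$ error. This is immediate when $s-1-k\geq 1$, but the edge case $k=s-1$ (flat-top polytope) requires observing that $\meas_{s-1}(H_{a-\del}\cap\calP)$ is locally constant in $\del$ (by direct inspection of the maximal face) in order to avoid a spurious $1/\del$ blow-up in the naive Taylor expansion.
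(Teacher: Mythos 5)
Your proof is correct and takes essentially the same route as the paper: decompose $r^-(l)=\tilde r(l)+O\bigl(\sum_{i_0} r_{\tilde A,i_0}(l)\bigr)$, recast $\tilde r(l)$ as a lattice-point count in a hyperplane slice of a ($C_5$-shrunken) rescaled copy of $\calP$, invoke Assumption \ref{assp5} (Proposition \ref{prop:volume_section_polytope}\ref{item:volume_s-1},\ref{item:volume_boundary}) for the volume and boundary-measure, invoke Assumption \ref{assp7} for each $r_{\tilde A,i_0}(l)$, and absorb the $C_5$-perturbation into the error; the paper blows up $\calP$ by a factor $N-C_5$ instead of shrinking the lattice to $N^{-1}\ZZ^s$, but these bookkeeping choices are interchangeable. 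One small remark: your concern about the ``flat-top'' edge case $k=s-1$ is misplaced, since there $\del^{s-1-k}=\del^0=1$ identically, so there is no Taylor expansion to perform and no possible $1/\del$ blow-up; the (routine) check is needed instead when $s-1-k\geq 1$, where $\tilde\del^{s-1-k}=\del^{s-1-k}+O(\del^{s-2-k}/N)$ and $s-2-k\geq 0$ guarantees the extra term is $O(1/N)$, hence $O(N^{s-2})$ after multiplying by $N^{s-1}$, exactly as the stated error permits.
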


\begin{remark}
Note that exactly the same asymptotic also holds for $r^+(l)$, but then in the range
$$(a-\del) \frac{\log B}{\log \tet} \leq l\leq a \frac{\log B}{\log \tet}.$$
\end{remark}

\begin{proof}
We recall that $\tilde{r}(l)$ counts lattice points in the polytope $P(l,B,\tet)$ given by 
\begin{gather*}
\sum_{i=1}^s t_i=l,\\
\sum_{i=1}^s \alp_{i,k} \varpi_i^{-1}t_i \leq \frac{\log B}{\log \tet}- C_5 ,\quad k\in \calK,\\
t_i\geq 0,\quad 1\leq i\leq s.
\end{gather*}
We observe that $P(l,B,\tet)$ is equal to the polytope $\left(\frac{\log B}{\log \tet} -C_5\right)\calP$, i.e. the polytope $\calP$ blown up by a factor of $\frac{\log B}{\log \tet}-C_5$, intersected with the hyperplane $\left(\frac{\log B}{\log \tet} -C_5\right) H_{\del'}$ given by
$$\sum_{i=1}^s t_i= \left(\frac{\log B}{\log \tet} -C_5\right) (a-\del'),$$
where $\del' \geq 0$ is chosen such that
$$l= \left(\frac{\log B}{\log \tet} -C_5\right) (a-\del').$$

I.e. our task is to count lattice points in the polytope $\left(\frac{\log B}{\log \tet} -C_5\right)(\calP\cap H_{\del'})$, which is the same as counting integer lattice points in the projection of this polytope to one of the coordinate hyperplanes. Here we can apply Davenport's lemma \cite{davenport}.

\color{black}
By equation (\ref{eqn65}) \color{black} we have
\begin{gather*}
\meas P(l,B,\tet)= c_{\calP} \left(\frac{\log B}{\log \tet}-C_5\right)^{s-1} \left( a- \left(\frac{\log B}{\log \tet}-C_5\right)^{-1}l\right)^{s-1-k} \\
+O\left( \left(\frac{\log B}{\log \tet}\right)^{s-1}  \left( a-\left(\frac{\log B}{\log \tet}-C_5\right)^{-1}l\right)^{s-k}\right).
\end{gather*}
We can rewrite this as
\begin{gather*}
\meas P(l,B,\tet)= c_{\calP} \left(\frac{\log B}{\log \tet}\right)^{s-1} \left( a-\frac{\log \tet}{\log B}l\right)^{s-1-k} \\
+O\left( \left(\frac{\log B}{\log \tet}\right)^{s-1}  \left( a-\frac{\log \tet }{\log B}l\right)^{s-k} + \left(\frac{\log B}{\log \tet}\right)^{s-2}\right).
\end{gather*}

The measure of the projection of $\calP\cap H_{\del'}$ to various coordinate spaces is bounded. Hence the measure of the projections of dimension at most $s-2$ of the blown-up polytope $\left(\frac{\log B}{\log \tet} -C_5\right)(\calP\cap H_{\del'})$ is bounded by
$$ \ll \left( \frac{\log B}{\log \tet}\right)^{s-2}.$$
By Davenport's lemma \cite{davenport} and for $\del$ sufficiently small we find that
\color{black}
\begin{gather*}
\tilde{r}(l)=c_{\calP} \left(\frac{\log B}{\log \tet}\right)^{s-1} \left( a- \frac{\log \tet}{\log B}l\right)^{s-1-k} \\
+O\left( \left(\frac{\log B}{\log \tet}\right)^{s-2}+ \left(\frac{\log B}{\log \tet}\right)^{s-1} \left( a- \frac{\log \tet}{\log B}l\right)^{s-k}\right).
\end{gather*}
Finally, by the same arguments as before and equation (\ref{eqn65a}) \color{black} we have for any $1\leq i_0\leq s$ that
\begin{gather*}
 r_{\tilde{A},i_0}(l) \ll_{\tilde{A}}  \frac{\log \log B}{\log B}\left(\frac{\log B}{\log \tet}\right)^{s-1} \left( a- \frac{\log \tet}{\log B} l\right)^{s-1-k}+\left(\frac{\log B}{\log \tet}\right)^{s-2}.
\end{gather*}
Here we use again that all volumes of projections of the corresponding polytope are bounded.
\color{black}
\end{proof}

Let $\del_0>0$ be a parameter. We write
$$S_{1,f}^- = M_{1,f}^-+ E_{2,f}^- + E_{3,f}^-$$ with
$$M_{1,f}^-= (\tet -1)^s C_{f,M} \sum_{(a-\del_0)( \log B/\log -C_5)  \leq l\leq a (\log B/\log \tet - C_5)} r^-(l) \tet^l$$
and
$$E_{2,f}^-\ll C_{f,E}  \left(\frac{\log B}{\log \tet}\right)^s B^{a} (\log B)^{-\vardel \tilde{A}}$$
and
$$E_{3,f}^- \ll (\tet -1)^s C_{f,M} \sum_{l\leq (a-\del_0) (\log B/\log \tet - C_5)} r^-(l) \tet^l.$$
First we bound the error term $E_{3,f}^-$. For this we observe that if $l\leq (a-\del_0) \log B/\log \tet$, then 
$$\tet^l \ll B^{a-\del_0}.$$
Moreover, as each of the $l_i$ in the counting function $r^-(l)$ is bounded by $\ll \frac{\log B}{\log \tet}$ we have
$$\sum_{l\leq a \log B/\log \tet} r^-(l) \ll \left(\frac{\log B}{\log \tet}\right)^s.$$
This gives the estimate
$$E_{3,f}^-\ll (\tet -1)^s C_{f,M} \left(\frac{ \log B}{\log \tet}\right)^{s} B^{a-\del_0}.$$
We conclude that
$$S_{1,f}^-= M_{1,f}^- + O\left(C_{f,E}  \left(\frac{\log B}{\log \tet}\right)^s B^{a} (\log B)^{-\vardel \tilde{A}}\right),$$
as long as 
\begin{equation}\label{eqndelkap}
\Delta \tilde{A} \log \log B \leq \del_0 \log B.
\end{equation}

We now use Lemma \ref{rmin} to first evaluate the main term $M_{1,f}^-$. For $B$ sufficiently large and $\del_0$ sufficiently small, we have
\begin{equation*}
\begin{split}
&M_{1,f}^-= (\tet -1)^s C_{f,M} \sum_{(a-\del_0) \left(\frac{\log B}{\log \tet}- C_5\right) \leq l\leq a \left(\frac{\log B}{\log \tet}- C_5\right)} r^-(l) \tet^l \\
&= (\tet -1)^s C_{f,M}   \sum_{(a-\del_0) \left(\frac{\log B}{\log \tet}- C_5\right) \leq l\leq a \left(\frac{\log B}{\log \tet}- C_5\right)} c_{\calP} \left(\frac{\log B}{\log \tet}\right)^{s-1} \left( a- \frac{\log \tet}{\log B} l\right)^{s-1-k}\tet^l \\
&+O\left(C_{f,M} (\tet -1)^s  \sum_{l\leq a \frac{\log B}{\log \tet}}  \left(\frac{\log B}{\log \tet}\right)^{s-2} \tet^l \right)\\
&+O\left( C_{f,M}(\tet -1)^s   \sum_{l\leq a \frac{\log B}{\log \tet}} \left(\frac{\log B}{\log \tet}\right)^{s-1} \left( a- \frac{\log \tet}{\log B}l\right)^{s-k} \tet^l \right)\\
&+O\left( \frac{\log \log B}{\log B} (\tet -1)^s C_{f,M}  \sum_{l\leq a \frac{\log B}{\log \tet}}  \left(\frac{\log B}{\log \tet}\right)^{s-1} \left( a- \frac{\log \tet}{\log B}l\right)^{s-1-k}\tet^l \right).
\end{split}
\end{equation*}
We can also write $M_{1,f}^-$ as
\begin{equation*}
\begin{split}
& (\tet -1)^s C_{f,M} \sum_{(a-\del_0) \left(\frac{\log B}{\log \tet}- C_5\right) \leq l\leq a \left(\frac{\log B}{\log \tet}- C_5\right)} c_{\calP} \left(\frac{\log B}{\log \tet}\right)^{k} \left( a \frac{\log B}{\log \tet} - l\right)^{s-1-k}\tet^l \\
&+O\left(C_{f,M} (\tet -1)^s  \sum_{l\leq a \frac{\log B}{\log \tet}}   \left(\frac{\log B}{\log \tet}\right)^{s-2} \tet^l \right)\\
&+O\left( C_{f,M}(\tet -1)^s   \sum_{l\leq a \frac{\log B}{\log \tet}}  \left(\frac{\log B}{\log \tet}\right)^{k-1} \left(a\frac{\log B}{\log \tet} - l\right)^{s-k} \tet^l \right)\\
&+O\left( \frac{\log \log B}{\log B} (\tet -1)^s C_{f,M}  \sum_{l\leq a \frac{\log B}{\log \tet}}   \left(\frac{\log B}{\log \tet}\right)^{k} \left( a \frac{\log B}{\log \tet} - l\right)^{s-1-k}\tet^l\right)
\end{split}
\end{equation*}
or 
\begin{equation*}
\begin{split}
&C_{f,M}c_{\calP} (\log B)^k \left(\frac{\tet -1}{\log \tet}\right)^k (\tet-1)^{s-k} \\ & \sum_{(a-\del_0) \left(\frac{\log B}{\log \tet}- C_5\right) \leq l\leq a \left(\frac{\log B}{\log \tet}- C_5\right)}  \left( a \frac{\log B}{\log \tet} - l\right)^{s-1-k}\tet^l \\
&+O\left(C_{f,M} (\tet -1)^{s-1} \left(\frac{\log B}{\log \tet}\right)^{s-2} B^{a}  \right)\\
&+O\left( C_{f,M} (\log B)^{k-1}  \left(\frac{\tet -1}{\log \tet}\right)^{k-1}(\tet-1)^{s-k+1}  \sum_{0\leq l\leq a \frac{\log B}{\log \tet}} \left(\frac{\log B}{\log \tet} a- l\right)^{s-k} \tet^l \right)\\
&+O\left( \frac{\log \log B}{\log B} C_{f,M} (\log B)^k \left(\frac{\tet -1}{\log \tet}\right)^k (\tet-1)^{s-k} \sum_{0\leq l\leq a \frac{\log B}{\log \tet}}  \left( a \frac{\log B}{\log \tet} - l\right)^{s-1-k}\tet^l \right)
\end{split}
\end{equation*}
In the second line we computed the geometric series. 
In the following we assume that for $B$ large we choose $\tet$ in a way such that $a \frac{\log B}{\log \tet}$ is an integer.
Under this assumption $M_{1,f}^-$ becomes
\begin{equation*}
\begin{split}
&C_{f,M}c_{\calP} (\log B)^k \left(\frac{\tet -1}{\log \tet}\right)^k (\tet-1)^{s-k} \sum_{aC_5\leq m\leq (a-\del_0) C_5 + \del_0  \frac{\log B}{\log \tet}}  m^{s-1-k}\tet^{a\frac{\log B}{\log \tet} -m} \\
&+O\left( C_{f,M} (\tet -1)^{s-1} \left(\frac{\log B}{\log \tet}\right)^{s-2} B^{a}  \right)\\
&+O\left(  C_{f,M} (\log B)^{k-1}  \left(\frac{\tet -1}{\log \tet}\right)^{k-1}(\tet-1)^{s-k+1} \sum_{0\leq m\leq a \frac{\log B}{\log \tet}} m^{s-k} \tet^{a \frac{\log B}{\log \tet} -m} \right)\\
\\ &+O\left(C_{f,M}\frac{\log \log B}{\log B} (\log B)^k \left(\frac{\tet -1}{\log \tet}\right)^k (\tet-1)^{s-k} \sum_{0\leq m\leq a \frac{\log B}{\log \tet}}  m^{s-1-k}\tet^{a \frac{\log B}{\log \tet} -m}   \right).
\end{split}
\end{equation*}
We further rewrite this as
\begin{equation*}
\begin{split}
M_{1,f}^-&=   C_{f,M}c_{\calP} (\log B)^kB^{a} \left(\frac{\tet -1}{\log \tet}\right)^k (\tet-1)^{s-k} \sum_{a C_5\leq m\leq (a-\del_0) C_5 + \del_0  \frac{\log B}{\log \tet}} m^{s-1-k}\tet^{-m} \\
&+O\left( C_{f,M} (\tet -1)^{s-1} \left(\frac{\log B}{\log \tet}\right)^{s-2} B^{a}  \right)\\
&+O\left(  C_{f,M} (\log B)^{k-1} B^{a} \left(\frac{\tet -1}{\log \tet}\right)^{k-1}(\tet-1)^{s-k+1} \sum_{0\leq m\leq a \frac{\log B}{\log \tet}} m^{s-k} \tet^{-m} \right)\\
&+O\left(    C_{f,M}\frac{\log \log B}{\log B} (\log B)^kB^{a} \left(\frac{\tet -1}{\log \tet}\right)^k (\tet-1)^{s-k} \sum_{0\leq m\leq a \frac{\log B}{\log \tet}} m^{s-1-k}\tet^{-m}   \right)
\end{split}
\end{equation*}
We recall the notation
$$g_\ell(M,\tet)=\sum_{0\leq m\leq M} m^\ell \tet^m.$$
With this notation $M_{1,f}^- $ equals
\begin{equation*}
\begin{split}
&C_{f,M}c_{\calP} (\log B)^kB^{a} \left(\frac{\tet -1}{\log \tet}\right)^k (\tet-1)^{s-k} g_{s-1-k}((a-\del_0) C_5 + \del_0  \log B/\log \tet, \tet^{-1}) \\
&+O\left( C_{f,M}  (\log B)^kB^{a} \left(\frac{\tet -1}{\log \tet}\right)^k (\tet-1)^{s-k}\right)\\
&+O\left( C_{f,M} (\tet -1)^{s-1} \left(\frac{\log B}{\log \tet}\right)^{s-2} B^{a}  \right)\\
&+O\left(  C_{f,M} (\log B)^{k-1} B^{a} \left(\frac{\tet -1}{\log \tet}\right)^{k-1}(\tet-1)^{s-k+1} g_{s-k}(a \log B/\log \tet, \tet^{-1}) \right)\\
&+O\left( C_{f,M}\frac{\log \log B}{\log B} (\log B)^kB^{a} \left(\frac{\tet -1}{\log \tet}\right)^k (\tet-1)^{s-k} g_{s-1-k}(a\log B/\log \tet, \tet^{-1})\right).
\end{split}
\end{equation*}
We now apply Lemma \ref{lemgl2} and obtain
\begin{equation*}
\begin{split}
M_{1,f}^- &=  C_{f,M}c_{\calP} (\log B)^kB^{a} \left(\frac{\tet -1}{\log \tet}\right)^k \tet^{s-k} (s-1-k)!\\
&+O\left( C_{f,M} (\tet -1)^{s-1} \left(\frac{\log B}{\log \tet}\right)^{s-2} B^{a}  \right)\\
&+O\left(  C_{f,M} (\log B)^{k-1} B^{a} \right)\\
&+O\left(  C_{f,M}   B^{a} (\log B)^k (\tet-1)\right)\\
&+ O\left(  C_{f,M} B^{a-\del_0}(\log B)^k \left( \frac{\log B}{\log \tet}\right)^{s-1-k} +  C_{f,M} (\log B)^{k-1} \left( \frac{\log B}{\log \tet}\right)^{s-k}  \right)\\
&+O\left( C_{f,M} \frac{\log\log B}{\log B}\left( (\log B)^kB^{a}+ B^{a-\del_0}(\log B)^k \left( \frac{\log B}{\log \tet}\right)^{s-1-k} \right) \right).
\end{split}
\end{equation*}
Next we need to choose $\tet$. We assume that
\begin{equation}\label{eqn80b}
1+\frac{1}{(\log B)^{10A}}<\tet< 1+\frac{1}{(\log B)^A},
\end{equation}
where $A>s$ is a fixed parameter. Then we have
$$\left(\frac{\tet -1}{\log \tet}\right)^k \tet^{s-k}= 1+O\left(\frac{1}{(\log B)^A}\right),$$
and
$$(\tet-1) \left(\frac{\tet -1}{\log \tet}\right)^{s-2} = O\left(\frac{1}{(\log B)^A}\right).$$
Moreover we assume that we now take $\del_0$ sufficiently small such that Lemma \ref{rmin} holds. Note that for $B$ sufficiently large, we automatically have
$$   20s (A+1)  \log \log B\leq  \del_0 \log B.$$
We deduce that
\begin{equation*}
\begin{split}
M_{1,f}^- &=  (s-1-k)! C_{f,M}c_{\calP} (\log B)^kB^{a} +O\left( C_{f,M}  B^{a} (\log B)^{k-1} \log \log B \right).
\end{split}
\end{equation*}

We now turn to the treatment of the error term. Recall that we have
$$S_{1,f}^-= M_{1,f}^- + O\left(C_{f,E}  \left(\frac{\log B}{\log \tet}\right)^s B^{a} (\log B)^{-\vardel \tilde{A}}\right)$$
We now observe under the assumption of equation (\ref{eqn80b}) that we have
$$\frac{1}{\log \tet}\ll (\log B)^{10A},$$
and 
$$S_{1,f}^-= M_{1,f}^- + O\left(C_{f,E} ( \log B)^{s+10As} B^{a} (\log B)^{-\vardel \tilde{A}}\right)$$
Let $A^\dagger$ be a positive real parameter. If we take $\tilde{A}$ sufficiently large depending on $A$, $A^\dagger$, $s$ and $\vardel$, then we get
$$C_{f,E} ( \log B)^{s+10As} B^{a} (\log B)^{-\vardel \tilde{A}} \ll C_{f,E} B^{a} (\log B)^{-A^\dagger}.$$
Observe that the same calculations are also valid for $S_{1,f}^+$ in place of $S_{1,f}^-$. We deduce that
\begin{equation*}
\begin{split}
S_{1,f}&=(s-1-k)!  C_{f,M}c_{\calP} (\log B)^kB^{a} \\
&+O\left(  C_{f,M}(\log \log B) (\log B)^{k-1} B^{a}   \right) + O\left( C_{f,E} B^{a} (\log B)^{-A^\dagger}\right).
 \end{split}
 \end{equation*}

We recall that we have made the assumption that 
$$a \log B/\log \tet$$
is integral. Hence we need to show that for every $B$ sufficiently large there is a $\tet$ in the range (\ref{eqn80b}) such that this expression is integral. Note that the conditions on $\tet$ in (\ref{eqn80b}) translate into saying that
$$a \frac{\log B}{\log(1+(\log B)^{-A})}< a\frac{\log B}{\log \tet}< a \frac{\log B}{\log (1+(\log B)^{-10A})}.$$
or
$$\log B (\log B)^A \ll a \frac{\log B}{\log \tet} \ll (\log B)^{10A} (\log B),$$
which for $B$ growing certainly contains an integer.\par

More generally, let $W_i>0$, $1\leq i\leq s$ be parameters such that there exists $N>0$ with 
$$(\log B)^{\tilde{A}}<W_i< (\log B)^N,\quad 1\leq i\leq s.$$
Set
$$S_{1,f}(\bfW,B):=  \sum_{\substack{\prod y_i^{\alp_{i,k}}\leq B,\ k\in \calK\\y_i\in \N, 1\leq i\leq s \\ y_i \geq W_i \, \forall 1\leq i\leq s}}f(\bfy).$$
We rephrase our findings in the following lemma.
\begin{lemma}\label{lem40}
Let $f: \N^s\rightarrow \R_{\geq 0}$ be a function satisfying Property I. Assume that Assumption \ref{assp10} and Assumption \ref{assp11} hold. Then, for any $\tilde A$ sufficiently large in terms of $s$ and $\Delta$, we have
\begin{equation*}
\begin{split}
S_{1,f}(\bfW,B)&=(s-1-k)!  C_{f,M}c_{\calP} (\log B)^kB^{a} +O\left(  C_{f,E}(\log \log B) (\log B)^{k-1} B^{a}   \right) ,
 \end{split}
 \end{equation*}
where the implied constants may depend on $N$, $\tilde{A}$ and the polytope $\calP$.
\end{lemma}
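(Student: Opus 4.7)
The plan is to mimic the dyadic-box argument that was just carried out in the unnumbered computation leading to the asymptotic for $S_{1,f}$ (the case $W_i=(\log B)^{\tilde A}$), and to absorb all the extra dependence on $\bfW$ into the error terms. Concretely, I would introduce modified index sets $\calL^\pm(\bfW)\subset\Z_{\geq 0}^s$ defined by the same upper-bound inequalities $\sum_{i=1}^s\alp_{i,k}\varpi_i^{-1}l_i\leq \log B_k/\log\tet$ (shifted appropriately for $\calL^-$) as before, but with the lower bound $l_i\geq \varpi_i\tilde A\log\log B/\log\tet$ replaced by $\tet_i^{l_i}\geq W_i$ (for $\calL^+$) or $\tet_i^{l_i+1}\geq W_i$ (for $\calL^-$). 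Since $(\log B)^{\tilde A}\leq W_i\leq (\log B)^N$, this amounts to replacing the constant $\tilde A$ in the original lower bound by a uniform constant between $\tilde A$ and $N$. The inequalities
\[
\sum_{\bfl\in\calL^-(\bfW)}B_f(\bfl,\tet)\leq S_{1,f}(\bfW,\bfB)\leq \sum_{\bfl\in\calL^+(\bfW)}B_f(\bfl,\tet)
\]
then hold by construction, and Property I gives the box estimate already computed, namely
\[
B_f(\bfl,\tet)=C_{f,M}(\tet-1)^s\tet^{\sum_i l_i}+O\bigl(C_{f,E}\tet^{\sum_i l_i}(\min_i\tet^{\varpi_i^{-1}l_i})^{-\vardel}\bigr).
\]

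The next step is to estimate each sum $\sum_{\bfl\in\calL^\pm(\bfW)}B_f(\bfl,\tet)$ separately. The error contribution coming from the $C_{f,E}$-term is bounded by $C_{f,E}(\log B/\log\tet)^s B^a(\min_i W_i)^{-\vardel}$, and since $W_i\geq(\log B)^{\tilde A}$ and $\log\tet\gg(\log B)^{-10A}$ by (\ref{eqn80b}), this is $\ll C_{f,E}B^a(\log B)^{-A^\dagger}$ once $\tilde A$ is chosen large in terms of $A,A^\dagger,s,\vardel,N$, exactly as in the treatment of $E_{3,f}^\pm$ in the excerpt. For the main term I would group by $l=\sum_i l_i$ and use counting functions $r_{\bfW}^\pm(l)$ defined in the obvious way. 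The key observation is that $r_{\bfW}^\pm(l)$ agrees with the quantity $\tilde r(l)$ from Lemma \ref{rmin} up to an error of the form $O\bigl(\sum_{1\leq i_0\leq s} r_{N',i_0}(l)\bigr)$ for some $N'$ depending on $N$, because the new lower bound $l_i\geq \log W_i/\log\tet_i$ satisfies $\log W_i/\log\tet\leq N\log\log B/\log\tet$. By Assumption \ref{assp7}, valid here via Assumption \ref{assp11} and Proposition \ref{prop:volume_section_polytope}\ref{item:volume_s-1_kappa}, this error is bounded uniformly in $\bfW$ by $O\bigl((\log\log B/\log B)(\log B/\log\tet)^{s-1}(a-\frac{\log\tet}{\log B}l)^{s-1-k}\bigr)$, which is exactly the perturbation already accounted for in Lemma \ref{rmin}.

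With these two ingredients in place, the remaining computation---substituting the estimate for $r_{\bfW}^\pm(l)$ into the weighted sum, extracting the geometric-polynomial sum $g_{s-1-k}(\cdot,\tet^{-1})$, applying Lemma \ref{lemgl2}, and choosing $\tet$ in the range (\ref{eqn80b})---is identical line-by-line to the derivation already given for the case $W_i=(\log B)^{\tilde A}$, and produces the main term $(s-1-k)!\,C_{f,M}c_P(\log B)^k B^a$ together with error $O\bigl(C_{f,M}(\log\log B)(\log B)^{k-1}B^a\bigr)$. The only thing I expect to be mildly delicate is the bookkeeping of uniformity: one has to verify that every implicit constant appearing in the chain of estimates depends only on $\calP$, $N$, $A^\dagger$ and $\tilde A$, and not on the specific $\bfW$. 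This uniformity is already built into Proposition \ref{prop:volume_section_polytope} and Lemma \ref{rmin}, so no new ideas are needed.
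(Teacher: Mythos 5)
Your approach is essentially the one the paper intends: the paper offers no independent proof of Lemma \ref{lem40} beyond the sentence ``We rephrase our findings in the following lemma,'' and the preceding unnumbered computation for the special choice $W_i=(\log B)^{\tilde A}$ is understood to carry over verbatim once one notices that the bound $W_i\leq (\log B)^N$ keeps the excluded boundary strip inside the regime $\kap\ll N\log\log B/\log B$ handled by Assumption \ref{assp7}, and that $W_i>(\log B)^{\tilde A}$ still forces the $C_{f,E}$-error term to behave like $E_{3,f}^\pm$. You identify both of those points correctly.

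One slip worth correcting: you have interchanged the defining conditions of $\calL^+(\bfW)$ and $\calL^-(\bfW)$. For the sandwich $\sum_{\calL^-(\bfW)}B_f\leq S_{1,f}(\bfW,\bfB)\leq\sum_{\calL^+(\bfW)}B_f$ to hold, $\calL^-(\bfW)$ must consist of boxes \emph{entirely contained} in the region $y_i\geq W_i$, hence needs the stronger condition $\tet_i^{l_i}\geq W_i$, whereas $\calL^+(\bfW)$ must include every box \emph{meeting} the region, hence the weaker condition $\tet_i^{l_i+1}\geq W_i$. This is the analogue of the paper's $l_i\geq \varpi_i\tilde A\log\log B/\log\tet$ for $\calL^-$ versus $l_i+1\geq \varpi_i\tilde A\log\log B/\log\tet$ for $\calL^+$; as you have it, $\calL^+(\bfW)\subset\calL^-(\bfW)$ and the upper bound would fail. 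With that swap corrected, the remainder of your proposal is sound and matches the paper's argument.
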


Next we turn to the treatment of the contributions where some variables in the sum $S^f(B)$ can be small. Let $N>0$ be a parameter as above and consider $W< (\log B)^N$. For $i_0\in \{1,\ldots, s\}$ set
$$S_{i_0}(W,B):= \sum_{\substack{\prod y_i^{\alp_{i,k}}\leq B,\ k\in \calK\\y_i\in \N, 1\leq i\leq s \\ y_{i_0} \leq W}}f(\bfy). $$
We cover the region of summation by dyadic boxes, i.e. we set $\theta=2$ above. Note that for any value of $\bfl$ we have
$$B_f(\bfl,\theta)\ll C_{f,E} \theta^{\sum_{i=1}^s l_i}.$$

We define the set $\mathcal{L}_{i_0}$ to be the set of lattice points $\bfl\in\ZZ_{\geq 0}^s$ which lie in the polytope given by
$$l_{i_0} \leq \frac{\varpi_{i_0} \log W}{\log \tet}= \frac{\varpi_{i_0}\log W}{\log B} \frac{\log B}{\log \tet},$$
and
\begin{gather*}
\sum_{i=1}^s \alp_{i,k} \varpi_i^{-1}l_i \leq \frac{\log B}{\log \tet},\quad k\in \calK.
\end{gather*}
Let $r_{W,i_0}(l)$ be the number of $\bfl\in \Z_{\geq 0}^s$ in $\mathcal{L}_{i_0}$ with $\sum_{i=1}^s l_i=l$. With this notation we find that

$$S_{i_0}(W,\mathbf{B})\leq \sum_{\bfl\in \mathcal{L}_{i_0}}B_f(\bfl,\theta) \ll C_{f,E}\sum_{l\leq a \frac{\log B}{\log \theta}} r_{W,i_0}(l) \theta^l$$
We observe that $r_{W,i_0}(l)$ is the number of lattice points in the intersection of the polytope $\frac{\log B}{\log \theta} \mathcal{P}_{i_0,\kappa}$, where $\kappa=  \frac{\varpi_{i_0}\log W}{\log B}$, intersected with the hyperplane $\frac{\log B}{\log \tet} H_{\delta}$ with $\delta =a- \frac{\log \tet}{\log B} l$. Let $\del_1>0$ be a parameter to be chosen later. As above, we observe that

\begin{gather*}
\sum_{l\leq (a-\del_1)\frac{\log B}{\log \theta}} r_{W,i_0}(l) \theta^l \ll  (\log B)^s B^{a-\del_1}.
\end{gather*}

Hence, if we assume that 
\begin{equation}\label{eqndel1}
(s+1) \log \log B \leq \del_1 \log B,
\end{equation}
then we have
$$S_{i_0}(W,B)\ll C_{f,E}\sum_{(a-\del_1) \frac{\log B}{\log \theta} \leq l\leq a \frac{\log B}{\log \theta}} r_{W,i_0}(l) \theta^l + C_{f,E} (\log B)^{-1} B^a.$$

If $k=0$, then for $\kappa$ and $\delta$ sufficiently small the intersection of $H_\delta$ and $\mathcal{P}_{i_0,\kappa}$ is empty and there is nothing to bound. Hence in the following we may assume $k\geq 1$.\par
By enlarging $B$ by at most a constant factor depending on $a$, we may assume that $a\frac{\log B}{\log \tet}$ is integral. If $l=\frac{\log B}{\log \tet} a$, then $\del=0$ and the dimension of the intersection of $\mathcal{P}_{i_0,\kappa}\cap H_0$ is at most $k$. If the dimension of the intersection of $\mathcal{P}_{i_0,\kappa}\cap H_0$ is at most $k-1$, then we observe that all projections of this intersection to coordinate spaces are bounded, and hence we obtain the bound
$$r_{W,i_0}(l)\ll \left(\frac{\log B}{\log \tet}\right)^{k-1}.$$

Finally, assume that the dimension of the intersection $\mathcal{P}_{i_0,\kappa}\cap H_0$ is equal to $k$. Consider the projection of $\mathcal{P}_{i_0,\kappa}\cap H_0$ to a $k$-dimensional coordinate subspace $V$. Then we aim to bound the number of integer lattice points which are contained in the projection of $\frac{\log B}{\log \tet}(\mathcal{P}_{i_0,\kappa}\cap H_0)$ to $V$. By Proposition \ref{prop:volume_section_polytope}\ref{item:volume_proj} the $k$-dimensional volume of the projection of $\mathcal{P}_{i_0,\kappa}\cap H_0$ to $V$ is bounded by $\ll \kappa$ and the lower dimensional volumes of projections to coordinate spaces of dimension at most $k-1$ are bounded.

Then, by Davenport's lemma \cite{davenport} we have
$$r_{W,i_0}(l)\ll \left(\frac{\log B}{\log \tet}\right)^{k} \frac{\log W}{\log B} + \left(\frac{\log B}{\log \tet}\right)^{k-1}.$$
Next we consider the case $\frac{\log B}{\log \tet} (a-\del_1)\leq l<\frac{\log B}{\log \tet} a$, i.e. $0<\del\leq \del_1$. As we are only interested in an upper bound for $S_{i_0}(W,B)$, we may enlarge $W$ to $W=(\log B)^N$ with $N$ sufficiently large, such that we have
$$(s+1)\frac{\log\log B}{\log B} \leq \frac{ N \varpi_{i_0} \log \log B}{\log B}.$$
Then we can choose $\del_1$ such that (\ref{eqndel1}) is satisfied and such that $\del_1\leq \kappa$.

By Proposition \ref{prop:volume_section_polytope}\ref{item:volume_proj} the polytope $\mathcal{P}_{i_0,\kappa}\cap H_\del$ has the following properties if $\kappa$ is sufficiently small and $\del\leq \kappa$:\\
(i) If we project $\mathcal{P}_{i_0,\kappa}\cap H_\del$ to coordinate spaces of dimension $k-1$ or smaller, then the volume is bounded by an absolute constant.\\
(ii) Let $k-1< m\leq s-1$. Then the $m$-dimensional volume of the projection of $\mathcal{P}_{i_0,\kappa}\cap H_\del$ to any $m$-dimensional coordinate space is bounded by
$$\ll \kappa \del^{m-k}.$$

Then by Davenport's lemma \cite{davenport} we obtain

\begin{align*}
r_{W,i_0}(l)&\ll  \left(\frac{\log B}{\log \tet}\right)^{k-1} +\sum_{m=k}^{s-1} \left(\frac{\log B}{\log \tet}\right)^{m}   \kappa \del^{m-k} \\
&\ll \left(\frac{\log B}{\log \tet}\right)^{k-1} + \sum_{m=k}^{s-1} \left(\frac{\log B}{\log \tet}\right)^{m}  \left( a-l\frac{\log \tet}{\log B}\right)^{m-k} \frac{\log\log B}{\log B}\\
&\ll \left(\frac{\log B}{\log \tet}\right)^{k-1} +\left(\frac{\log B}{\log \tet}\right)^{k-1}\sum_{m=k}^{s-1}  \left( a\frac{\log B}{\log \tet}-l\right)^{m-k} \frac{\log\log B}{\log \tet}
\end{align*}
 
Recall that $\tet=2$. With this we obtain
\begin{align*}
S_{i_0}(W,B)&\ll C_{f,E} (\log B)^{k-1}(\log \log B) B^a + C_{f,E} \sum_{(a-\del_1) \frac{\log B}{\log \tet} \leq l< a \frac{\log B}{\log \tet}} r_{W,i_0}(l) \tet^l\\
&\ll C_{f,E} (\log B)^{k-1}(\log \log B) B^a \\
&+  C_{f,E} \left(\frac{\log B}{\log \tet}\right)^{k-1}\sum_{m=k}^{s-1} \sum_{l< a\frac{\log B}{\log \tet}}  \left(a\frac{\log B}{\log \tet}-l\right)^{m-k}  \left( \frac{\log\log B}{\log \tet} \right) \tet^l
\end{align*}
Again using that $\tet=2$ we obtain.
\begin{align*}
S_{i_0}(W,B)&\ll C_{f,E} (\log B)^{k-1}(\log \log B) B^a \\
&+ C_{f,E} (\log B)^{k-1}(\log \log B)  \sum_{m=k}^{s-1}  \sum_{l< a\frac{\log B}{\log \tet}}\left(a\frac{\log B}{\log \tet}-l\right)^{m-k}   \tet^l \\
&\ll C_{f,E} (\log B)^{k-1}(\log \log B) B^a \\
&+ C_{f,E} (\log B)^{k-1}(\log \log B)B^a  \sum_{m=k}^{s-1}  \sum_{l< a\frac{\log B}{\log \tet}}\left(a\frac{\log B}{\log \tet}-l\right)^{m-k}   \tet^{-(a \frac{\log B}{\log \tet} - l)} \\
&\ll C_{f,E} (\log B)^{k-1}(\log \log B) B^a \\
&+C_{f,E}  (\log B)^{k-1} (\log \log B)  B^a   \sum_{u\leq a\frac{\log B}{\log \tet}} u^{s-k} \tet^{-u}  \\
\end{align*}

By Lemma \ref{lemgl2} or in observing that the last sum is absolutely convergent, we find that
\begin{align*}
S_{i_0}(W,B)\ll C_{f,E} (\log B)^{k-1}(\log \log B) B^a.
\end{align*}
\color{black}

This completes the proof of Theorem \ref{lemhyp3}. 
\section{$m$-full numbers}
\label{sec:m-full}
Let $m\geq 1$ be a natural number. We recall that an integer $y$ is called $m$-full if for each prime divisor $p$ of $y$, we have that $p^m$ divides $y$.
We introduce the function that counts the number of $m$-full natural numbers less than $B$
\begin{equation*}
\begin{split}
F_m(B):&=\sharp \left\{1\leq y\leq B, v_p(y)\in\{0\}\cup\ZZ_{\geq m}\forall p\right\}\\
&= \sharp\left\{1\leq y\leq B, y \mbox{ is $m$-full}\right\}.
\end{split}
\end{equation*}

\begin{lemma}[{\cite{erdos, MR0095804}}]
For each $m\geq 1$ and $B>0$ we have
\begin{equation}\label{eqnm}
F_m(B)=C_mB^{\frac{1}{m}}+O_m\left(B^{\kap_m}\right),
\end{equation}
where $C_1=1$, $\kap_1=0$, and for $m\geq 2$, 
\begin{equation}
\label{eq:C_m_explicit}
C_m=\prod_p\left(1+\sum_{j=m+1}^{2m-1}p^{-\frac jm}\right) \qquad
\kap_m=\frac1{m+1}.
\end{equation}
\end{lemma}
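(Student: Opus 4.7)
The argument splits on whether $m=1$ or $m\geq 2$. The case $m=1$ is immediate: $F_1(B)=\lfloor B\rfloor=B+O(1)$ matches the claim with $C_1=1$ and $\kap_1=0$. The substantive case is $m\geq 2$, where the plan is to establish a unique parametrization of $m$-full numbers that makes both the Euler product formula for $C_m$ and the error term $O_m(B^{1/(m+1)})$ fall out by elementary counting arguments.

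The first step is to establish the parametrization lemma: every $m$-full integer $y\in\ZZ_{\geq 1}$ admits a unique representation
\[
y=a^m\prod_{j=m+1}^{2m-1}b_j^j
\]
with $a\in\ZZ_{\geq 1}$ and $b_{m+1},\dots,b_{2m-1}$ squarefree and pairwise coprime. This is proved prime by prime: for each $p\mid y$, let $r:=v_p(y)\bmod m$. If $r=0$, set $v_p(a)=v_p(y)/m$ and leave $p$ out of every $b_j$; if $r\in\{1,\dots,m-1\}$, let $p$ divide $b_{m+r}$ to the first power and set $v_p(a)=(v_p(y)-(m+r))/m\geq 0$. The residue $r$ and resulting $v_p(a)$ are determined by $v_p(y)$, so the representation is unique. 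Note that $a$ need not be coprime to the $b_j$.

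Substituting this parametrization and writing $\lfloor x\rfloor=x+O(1)$ yields
\[
F_m(B)=B^{1/m}\sum_{\substack{(b_j)\,\text{s.f., pairwise coprime}\\ \prod_j b_j^j\leq B}}\prod_j b_j^{-j/m}+O\bigl(\#\{(b_j):\textstyle\prod_j b_j^j\leq B\}\bigr).
\]
The main-term sum extends to the full sum over all admissible tuples, which factors as an Euler product because the squarefree/pairwise-coprime constraints are prime-local: each prime $p$ either divides none of the $b_j$ (local factor $1$) or exactly one $b_{j_0}$ for some $j_0\in\{m+1,\dots,2m-1\}$ (local factor $p^{-j_0/m}$). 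This produces precisely $C_m=\prod_p\bigl(1+\sum_{j=m+1}^{2m-1}p^{-j/m}\bigr)$, whose convergence is guaranteed by the bound $1+O(p^{-(m+1)/m})$ on each local factor.

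It remains to bound both the tail of the main-term sum and the counting error by $O_m(B^{1/(m+1)})$. In each case I isolate the variable $b_{m+1}$, which carries the weakest decay $j/m=(m+1)/m$, and sum it first; the remaining variables $b_j$ with $j\geq m+2$ contribute convergent sums since $j/(m+1)>1$. For the tail $\sum_{\prod b_j^j>B}\prod b_j^{-j/m}$, summing $b_{m+1}$ over $b_{m+1}^{m+1}>B/\prod_{j\geq m+2}b_j^j$ gives $\ll(B/\prod_{j\geq m+2}b_j^j)^{-1/(m(m+1))}$, and summing out the remaining $b_j$ produces a uniform bound of $B^{-1/(m(m+1))}$; multiplied by $B^{1/m}$ this is $B^{1/(m+1)}$. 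For the counting error, the same strategy with $b_{m+1}\leq(B/\prod_{j\geq m+2}b_j^j)^{1/(m+1)}$ directly gives $O_m(B^{1/(m+1)})$. The main technical obstacle is setting up the parametrization so that the Euler product emerges cleanly and, simultaneously, choosing $b_{m+1}$ as the pivotal summation variable so that the sharp exponent $1/(m+1)$ appears in both error terms without a spurious logarithmic factor.
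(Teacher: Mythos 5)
The paper does not prove this lemma: it cites it directly to Erd\H{o}s--Szekeres and Bateman--Grosswald (the labels \texttt{erdos} and \texttt{MR0095804} in the statement), so there is no in-paper proof to compare against. Your argument is the classical Erd\H{o}s--Szekeres proof and it is correct. The parametrization $y=a^m\prod_{j=m+1}^{2m-1}b_j^j$ with the $b_j$ squarefree and pairwise coprime is the standard one and is indeed unique (your prime-by-prime construction is the right way to see it, and you correctly note that $a$ need not be coprime to the $b_j$). The Euler product emerges exactly as you say, its convergence follows from the local factor being $1+O(p^{-(m+1)/m})$, and isolating $b_{m+1}$ as the pivot variable gives the clean exponent $1/(m+1)$ in both error terms since the remaining weights $j/(m+1)>1$ for $j\geq m+2$ make the auxiliary sums converge. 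One minor point worth stating explicitly when you bound the tail $\sum_{\prod b_j^j>B}\prod b_j^{-j/m}$: you should split into the case $\prod_{j\geq m+2}b_j^j\leq B$ (where summing $b_{m+1}$ over the range $b_{m+1}^{m+1}>B/\prod_{j\geq m+2}b_j^j$ gives the bound you quote) and the case $\prod_{j\geq m+2}b_j^j>B$ (where the full sum over $b_{m+1}$ is $O(1)$ and the constraint on the remaining variables can be traded for the factor $B^{-1/(m(m+1))}$ via $\prod_{j\geq m+2}b_j^{-j/m}=\prod b_j^{-j/(m(m+1))}\cdot\prod b_j^{-j/(m+1)}<B^{-1/(m(m+1))}\prod b_j^{-j/(m+1)}$); both cases land on $O(B^{-1/(m(m+1))})$ and the argument goes through.
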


For a square-free positive integer $d$, we define
\begin{equation}
\label{eq:FmBd_defin}
F_m(B,d):= \sharp\left\{1\leq y\leq B, d\mid y,\ y\mbox{ is $m$-full}\right\}.
\end{equation}
In this section we prove an asymptotic formula for the function $F_m(B,d)$.
We will first do it for the case that $d$ is a prime. We will then inductively on the number of prime factors of $d$ provide a general asymptotic formula. 
First we provide a form of inclusion-exclusion lemma, which expresses $F_m(B,p)$ for a prime number $p$ in terms of sums of the function $F_m(B)$. Before we state the lemma, we introduce a convenient piece of notation. For $r\geq 1$ and $k\in \Z$ let 
$$\rho_m(k,r):= \sharp\left\{1\leq k_1,\ldots, k_r\leq (m-1):\ k=k_1+\ldots + k_r\right\}.$$
Note that $\rho_m(k,r)$ is zero, unless $r\leq k\leq r(m-1)$.\par

\begin{lemma}\label{lem5}
For $m\geq 2$ one has
\begin{equation*}
\begin{split}
F_m(B,p) = &F_m(Bp^{-m}) +\sum_{r=1}^{\infty} \sum_{k=2r}^{2r(m-1)}\rho_m(k,2r)F_m\left(Bp^{-(2r+1)m-k}\right)\\
&+ \sum_{r=1}^\infty\sum_{k=2r-1}^{(2r-1)(m-1)}\rho_m(k,2r-1)F_m\left(Bp^{-(2r-1)m-k}\right)\\
&-\sum_{r=1}^\infty \sum_{k=2r-1}^{(2r-1)(m-1)}\rho_m(k,2r-1)F_m\left(Bp^{-2rm-k}\right)\\
&-\sum_{r=1}^\infty\sum_{k=2r}^{2r(m-1)}\rho_m(k,2r)F_m\left(Bp^{-2rm-k}\right).
\end{split}
\end{equation*}

\end{lemma}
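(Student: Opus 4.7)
The plan is to prove Lemma~\ref{lem5} by first establishing a finite ``one-step'' recursion for $F_m(B,p)$ and then iterating it, with careful bookkeeping using the combinatorial meaning of $\rho_m(k,r)$.

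\emph{Setting up the recursion.} Any $m$-full $y\le B$ divisible by $p$ has $v_p(y)\ge m$, so the map $y\mapsto w:=y/p^m$ bijects such $y$'s with integers $w\le Bp^{-m}$ that are ``$m$-full away from $p$'' (for every prime $q\neq p$, either $q\nmid w$ or $q^m\mid w$), with no restriction on $v_p(w)$. Each such $w$ factors uniquely as $w=p^e u$ with $p\nmid u$, $u$ an $m$-full integer, and $e\ge 0$, so writing $G(Z):=F_m(Z)-F_m(Z,p)$ for the number of $m$-full $u\le Z$ coprime to $p$, we obtain $F_m(B,p)=\sum_{e\ge 0}G(Bp^{-m-e})$. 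The parallel decomposition $F_m(Y)=G(Y)+\sum_{e\ge m}G(Yp^{-e})$, applied with $Y=Bp^{-m}$ to merge the ranges $e=0$ and $e\ge m$, gives the finite recursion
\[
F_m(B,p)=F_m(Bp^{-m})+\sum_{k=1}^{m-1}\bigl[F_m(Bp^{-m-k})-F_m(Bp^{-m-k},p)\bigr].
\]

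\emph{Iteration and regrouping.} I apply this recursion repeatedly to each $F_m(\cdot,p)$ on the right-hand side. Setting $R_0:=F_m(B,p)$ and, for $j\ge 1$,
\[
R_j:=(-1)^j\sum_{k_1,\ldots,k_j=1}^{m-1}F_m\bigl(Bp^{-jm-k_1-\cdots-k_j},p\bigr),
\]
the recursion rewrites $R_j-R_{j+1}$ as a sum of $F_m$-values at exponents $(j+1)m+K$ indexed by either $j$ or $j+1$ integers from $\{1,\ldots,m-1\}$ summing to $K$, both with sign $(-1)^j$. Since $F_m(X)=F_m(X,p)=0$ for $X<1$, the $R_j$ vanish for $j$ large, and telescoping yields a finite identity. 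Collapsing each multi-index sum into a single sum over $K$ turns the ``$j$-index'' and ``$(j+1)$-index'' contributions into $\rho_m(K,j)$ and $\rho_m(K,j+1)$, respectively, so the total coefficient of $F_m(Bp^{-sm-K})$ (put $s=j+1$) is $(-1)^{s-1}\bigl[\rho_m(K,s-1)+\rho_m(K,s)\bigr]$, under the conventions $\rho_m(0,0)=1$ and $\rho_m(K,0)=0$ for $K\ge 1$. Splitting $s$ by parity into $s=2r-1$ and $s=2r$ for $r\ge 1$ produces exactly the four sums in the statement, while the isolated term $F_m(Bp^{-m})$ accounts for the $s=1$, $K=0$ contribution.

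\emph{Main obstacle.} The bookkeeping in the regrouping is the most delicate step: at each iteration level two different types of contributions appear (with $j$ and $j+1$ summation indices), and I must verify that their sum, after separation by the parity of $s$, matches the four-sum structure with the correct signs and index ranges of $\rho_m$. A clean cross-check is via a formal generating-function computation in the shift $\sigma$ with $\sigma F_m(X):=F_m(Xp^{-1})$: both the iterate of the recursion and the right-hand side of the lemma represent the operator $\sigma^m/(1-\sigma+\sigma^m)$ applied to $F_m(B)$, so the combinatorial identity reduces to the elementary formal equality
\[
\sigma^m+(1-\sigma^m)\sum_{s\ge 1}(-1)^{s-1}\left(\frac{\sigma-\sigma^m}{1-\sigma}\right)^s=\frac{\sigma^m}{1-\sigma+\sigma^m},
\]
which is straightforward to verify by clearing denominators.
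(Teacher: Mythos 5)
Your proof is correct and takes a genuinely different route from the paper's, so a brief comparison: the paper works backwards, starting from the right-hand side of the identity, rewriting each $F_m(Bp^{-K})$ as a count $\sharp\{p^K l\le B, l$ $m$-full$\}$, applying the one-step decomposition (the paper's equation (2.4), your recursion is its $K=0$ rearrangement) to selected terms, and tracking cancellations until only $\sharp\{l\le B, p\mid l, l$ $m$-full$\}$ survives. You instead work forwards: you derive the one-step recursion
\[
F_m(B,p)=F_m(Bp^{-m})+\sum_{k=1}^{m-1}\bigl[F_m(Bp^{-m-k})-F_m(Bp^{-m-k},p)\bigr]
\]
directly from the $p$-adic valuation structure, and then iterate and telescope, with the combinatorics of $\rho_m$ emerging automatically from the multi-index sums $R_j$. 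Your approach is \emph{generative} (it shows where the four-sum structure comes from) whereas the paper's is a \emph{verification}; both are legitimate, and the telescoping via $R_j=0$ for large $j$ is sound because each exponent $jm+k_1+\cdots+k_j\ge j(m+1)$ eventually exceeds $\log_p B$.

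One slip, which does not affect the main argument: the generating-function cross-check at the end is misstated. The iterated recursion corresponds to the operator
\[
\sum_{s\ge1}(-1)^{s-1}\sigma^{sm}\Bigl[\Bigl(\tfrac{\sigma-\sigma^m}{1-\sigma}\Bigr)^{s-1}+\Bigl(\tfrac{\sigma-\sigma^m}{1-\sigma}\Bigr)^{s}\Bigr]
= \sigma^m+(1-\sigma^m)\sum_{s\ge1}(-1)^{s-1}\Bigl(\tfrac{\sigma^{m+1}-\sigma^{2m}}{1-\sigma}\Bigr)^{s},
\]
i.e.\ the argument inside the sum should carry the extra factor $\sigma^{ms}$. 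With that correction, summing the geometric series and using $1-\sigma+\sigma^{m+1}-\sigma^{2m}=(1-\sigma^m)(1-\sigma+\sigma^m)$ indeed gives $\sigma^m/(1-\sigma+\sigma^m)$. As written, with $(\sigma-\sigma^m)/(1-\sigma)$ in place of $(\sigma^{m+1}-\sigma^{2m})/(1-\sigma)$, the left-hand side simplifies to $\sigma$, not $\sigma^m/(1-\sigma+\sigma^m)$, so the stated identity is false. Since this is only offered as a sanity check and the explicit $R_j-R_{j+1}$ regrouping is carried out correctly, the proof stands; just fix that display.
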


Note that the summations in Lemma \ref{lem5} are in fact finite, as $F_m(P)=0$ if $P<1$. 

\begin{proof}
We start the proof in reinterpreting terms of the shape $F_m\left(Bp^{-K}\right)$ for some $K>0$ as
$$F_m\left(Bp^{-K}\right)= \sharp\left\{1\leq p^Kl\leq B, l \mbox{ is $m$-full}\right\}.$$
Then the right hand side in the identity in Lemma \ref{lem5} becomes
\begin{equation}\label{eqn20}
\begin{split}
RHS = &\sharp\left\{1\leq p^ml\leq B, l \mbox{ is $m$-full}\right\} \\
&+\sum_{r=1}^{\infty} \sum_{k=2r}^{2r(m-1)}\rho_m(k,2r)\sharp\left\{1\leq p^{(2r+1)m+k}l\leq B, l \mbox{ is $m$-full}\right\}\\
&+ \sum_{r=1}^\infty\sum_{k=2r-1}^{(2r-1)(m-1)}\rho_m(k,2r-1)\sharp\left\{1\leq p^{(2r-1)m+k}l\leq B, l \mbox{ is $m$-full}\right\}\\
&-\sum_{r=1}^\infty \sum_{k=2r-1}^{(2r-1)(m-1)}\rho_m(k,2r-1)\sharp\left\{1\leq p^{2rm+k}l\leq B, l \mbox{ is $m$-full}\right\}\\
&-\sum_{r=1}^\infty\sum_{k=2r}^{2r(m-1)}\rho_m(k,2r)\sharp\left\{1\leq p^{2rm+k}l\leq B, l \mbox{ is $m$-full}\right\}.
\end{split}
\end{equation}
For any $K\geq 0$ we use the identity
\begin{equation}\label{eqn20b}
\begin{split}
\sharp\left\{1\leq p^{K}l\leq B, l \mbox{ is $m$-full}\right\} &= \sharp\left\{1\leq p^{K}l\leq B, l \mbox{ is $m$-full}, p\nmid l\right\}\\
&+ \sharp\left\{1\leq p^{K+m}l\leq B, l \mbox{ is $m$-full}\right\}\\
&+\sum_{k=1}^{m-1}\sharp\left\{1\leq p^{K+m+k}l\leq B, l \mbox{ is $m$-full}, p\nmid l\right\}
\end{split}
\end{equation}
We use this identity for the terms in the third line in (\ref{eqn20}). We observe that the terms counting $1\leq p^{2rm+k}l\leq B$ with $l$ $m$-full identically cancel with the fourth line in (\ref{eqn20}). Hence we obtain
\begin{equation*}
\begin{split}
RHS = &\sharp\left\{1\leq p^ml\leq B, l \mbox{ is $m$-full}\right\} \\
&+\sum_{r=1}^{\infty} \sum_{k=2r}^{2r(m-1)}\rho_m(k,2r)\sharp\left\{1\leq p^{(2r+1)m+k}l\leq B, l \mbox{ is $m$-full}\right\}\\
&+ \sum_{r=1}^\infty\sum_{k=2r-1}^{(2r-1)(m-1)}\rho_m(k,2r-1)\sharp\left\{1\leq p^{(2r-1)m+k}l\leq B, l \mbox{ is $m$-full}, p\nmid l\right\}\\
&+ \sum_{r=1}^\infty\sum_{k=2r-1}^{(2r-1)(m-1)}\sum_{k_r=1}^{m-1}\rho_m(k,2r-1)\sharp\left\{1\leq p^{2rm+k+k_r}l\leq B, l \mbox{ is $m$-full}, p\nmid l\right\}\\
&-\sum_{r=1}^\infty\sum_{k=2r}^{2r(m-1)}\rho_m(k,2r)\sharp\left\{1\leq p^{2rm+k}l\leq B, l \mbox{ is $m$-full}\right\}.
\end{split}
\end{equation*}
Recalling the definition of the functions $\rho_m(k,r)$ we can further rewrite this as
\begin{equation}\label{eqn21}
\begin{split}
RHS = &\sharp\left\{1\leq p^ml\leq B, l \mbox{ is $m$-full}\right\} \\
&+\sum_{r=1}^{\infty} \sum_{k=2r}^{2r(m-1)}\rho_m(k,2r)\sharp\left\{1\leq p^{(2r+1)m+k}l\leq B, l \mbox{ is $m$-full}\right\}\\
&+ \sum_{r=1}^\infty\sum_{k=2r-1}^{(2r-1)(m-1)}\rho_m(k,2r-1)\sharp\left\{1\leq p^{(2r-1)m+k}l\leq B, l \mbox{ is $m$-full}, p\nmid l\right\}\\
&+ \sum_{r=1}^\infty\sum_{k=2r}^{2r(m-1)}\rho_m(k,2r)\sharp\left\{1\leq p^{2rm+k}l\leq B, l \mbox{ is $m$-full}, p\nmid l\right\}\\
&-\sum_{r=1}^\infty\sum_{k=2r}^{2r(m-1)}\rho_m(k,2r)\sharp\left\{1\leq p^{2rm+k}l\leq B, l \mbox{ is $m$-full}\right\}.
\end{split}
\end{equation}
We now use the identity (\ref{eqn20b}) for the terms in the last line in equation (\ref{eqn21}). The resulting terms with $p^{2rm+k}l\leq B$ and $p\nmid l$ cancel with the terms in the fourth line. Moreover, the terms with $1\leq p^{(2r+1)m+k}l\leq B$ and $l$ $m$-full identically cancel with the terms in the second line in (\ref{eqn21}). Hence we obtain
\begin{equation*}
\begin{split}
RHS &= \sharp\left\{1\leq p^ml\leq B, l \mbox{ is $m$-full}\right\} \\
&+ \sum_{r=1}^\infty\sum_{k=2r-1}^{(2r-1)(m-1)}\rho_m(k,2r-1)\sharp\left\{1\leq p^{(2r-1)m+k}l\leq B, l \mbox{ is $m$-full}, p\nmid l\right\}\\
&-\sum_{r=1}^\infty\sum_{k=2r}^{2r(m-1)}\sum_{k_{2r+1}=1}^{m-1}\rho_m(k,2r)\sharp\left\{1\leq p^{(2r+1)m+k+k_{2r+1}}l\leq B, l \mbox{ is $m$-full}, p\nmid l\right\}.
\end{split}
\end{equation*}
Again using the definition of the functions $\rho_m(k,2r)$ we can rewrite this as
\begin{equation}\label{eqn23}
\begin{split}
RHS &= \sharp\left\{1\leq p^ml\leq B, l \mbox{ is $m$-full}\right\} \\
&+ \sum_{r=1}^\infty\sum_{k=2r-1}^{(2r-1)(m-1)}\rho_m(k,2r-1)\sharp\left\{1\leq p^{(2r-1)m+k}l\leq B, l \mbox{ is $m$-full}, p\nmid l\right\}\\
&-\sum_{r=1}^\infty\sum_{k=2r+1}^{(2r+1)(m-1)}\rho_m(k,2r+1)\sharp\left\{1\leq p^{(2r+1)m+k}l\leq B, l \mbox{ is $m$-full}, p\nmid l\right\}.
\end{split}
\end{equation}
The last two sums in (\ref{eqn23}) cancel except for the terms with $r=1$ in the second line. Hence we get
\begin{equation*}
\begin{split}
RHS =& \sharp\left\{1\leq p^ml\leq B, l \mbox{ is $m$-full}\right\} \\
&+ \sum_{k=1}^{m-1}\sharp\left\{1\leq p^{m+k}l\leq B, l \mbox{ is $m$-full}, p\nmid l\right\}.
\end{split}
\end{equation*}
Similarly as in (\ref{eqn20b}) we now observe that on the right hand side we count exactly all $1\leq l\leq B$ such that $p\mid l$ and $l$ is $m$-full, which completes the proof of the lemma.
\end{proof}

Lemma \ref{lem5} now allows us to deduce an asymptotic formula for $F_m(B,p)$ given that we know (\ref{eqnm}). We recall that the sums in Lemma \ref{lem5} are all finite and hence we can first reorder them to take into account cancellation between different sums and then complete the resulting series to infinity. First we rewrite the expression for $F_m(B,p)$ in Lemma \ref{lem5} as

\begin{equation}\label{eqn25}
F_m(B,p) = F_m(Bp^{-m}) +\sum_{\mu=m+1}^\infty a_m(\mu) F_m(Bp^{-\mu}),
\end{equation}
with coefficients $a_m(\mu)$ that are given by
\begin{equation*}
\begin{split}
a_m(\mu)= &\sum_{r=1}^{\infty} \sum_{k=2r}^{2r(m-1)}\rho_m(k,2r)\id_{[\mu=(2r+1)m+k]}\\
&+ \sum_{r=1}^\infty\sum_{k=2r-1}^{(2r-1)(m-1)}\rho_m(k,2r-1)\id_{[\mu=(2r-1)m+k]}\\
&-\sum_{r=1}^\infty \sum_{k=2r-1}^{(2r-1)(m-1)}\rho_m(k,2r-1)\id_{[\mu=2rm+k]}\\
&-\sum_{r=1}^\infty\sum_{k=2r}^{2r(m-1)}\rho_m(k,2r)\id_{[\mu=2rm+k]}.
\end{split}
\end{equation*}
Note that all the appearing sums are in fact finite and hence we can reorder them freely. Our next goal is to get more understanding on the coefficients $a_m(\mu)$ (in particular their size) and hence we group them in a generating series. Define
$$G_m(x):=\sum_{\mu=m+1}^\infty a_m(\mu) x^\mu.$$
A first rough bound on the coefficients $a_m(\mu)$ can be obtained via the estimate
$$a_m(\mu)\leq 2\sum_{1\leq t\leq \frac{\mu}{m}} \sum_{k=t}^{t(m-1)}\rho_m(k,t)= 2 \sum_{1\leq t\leq \frac{\mu}{m}}  (m-1)^t .$$
For $m\geq 3$ we obtain
$$a_m(\mu)\leq 2(m-1)^{\frac{\mu}{m}+1},$$
whereas for $m=2$ we have the estimate
$$a_m(\mu)\leq 2 \frac{\mu}{m}.$$
In particular we deduce that there is some constant $R_m$ only depending on $m$ such that the power series $G_m(x)$ is absolutely convergent for $|x|<R_m$. Moreover, in choosing $R_m$ sufficiently small we can also assume that the sum  
$$\sum_{t=1}^{\infty} \sum_{k=t}^{t(m-1)}\rho_m(k,t)x^{tm+k}$$
is absolutely convergent. Our next goal is to write the generating series $G_m(x)$ as a fractional function and in this way realise that it has a larger radius of absolute convergence than the bound that is obtained from the very rough estimate on $a_m(\mu)$. For this we observe that for $|x|<R_m$ we can express $G_m(x)$ as
\begin{equation*}
\begin{split}
G_m(x)= &\sum_{r=1}^{\infty} \sum_{k=2r}^{2r(m-1)}\rho_m(k,2r)x^{(2r+1)m+k}\\
&+ \sum_{r=1}^\infty\sum_{k=2r-1}^{(2r-1)(m-1)}\rho_m(k,2r-1)x^{(2r-1)m+k}\\
&-\sum_{r=1}^\infty \sum_{k=2r-1}^{(2r-1)(m-1)}\rho_m(k,2r-1)x^{2rm+k}\\
&-\sum_{r=1}^\infty\sum_{k=2r}^{2r(m-1)}\rho_m(k,2r)x^{2rm+k}.
\end{split}
\end{equation*}
We can now compute the generating function $G_m(x)$ as
\begin{equation*}
\begin{split}
G_m(x)= &\sum_{r=1}^{\infty} x^{(2r+1)m}\left(\sum_{k=1}^{m-1}x^k\right)^{2r}+ \sum_{r=1}^\infty x^{(2r-1)m}\left(\sum_{k=1}^{m-1}x^k\right)^{2r-1}\\
&-\sum_{r=1}^\infty x^{2rm}\left(\sum_{k=1}^{m-1}x^k\right)^{2r-1} -\sum_{r=1}^\infty x^{2rm}\left(\sum_{k=1}^{m-1}x^k\right)^{2r}.
\end{split}
\end{equation*}
In the area of absolute convergence one may reorder the sums as
\begin{equation*}
\begin{split}
G_m(x)= &\sum_{t=1}^{\infty} (-1)^{t} x^{(t+1)m}\left(\sum_{k=1}^{m-1}x^k\right)^{t}+ \sum_{t=1}^{\infty} (-1)^{t+1} x^{tm}\left(\sum_{k=1}^{m-1}x^k\right)^{t}\\
=& (x^m-1)(-1)x^m\left(\sum_{k=1}^{m-1}x^k\right) \left(1-  (-1)x^m\left(\sum_{k=1}^{m-1}x^k\right) \right)^{-1}\\
=& (1-x^m)x^mx \frac{x^{m-1}-1}{x-1}\left(1+x^{m+1} \frac{x^{m-1}-1}{x-1}\right)^{-1}\\
=&  (1-x^m)x^{m+1} \frac{x^{m-1}-1}{x-1}(x-1)\left(x-1+x^{m+1} (x^{m-1}-1)\right)^{-1}\\
=&  (1-x^m)x^{m+1} (x^{m-1}-1)\left(x-1+x^{m+1} (x^{m-1}-1)\right)^{-1}\\
=&  (1-x^m)x^{m+1} (x^{m-1}-1)\left(x-1+x^{2m}-x^{m+1}\right)^{-1}
\end{split}
\end{equation*}
We observe that
$$x^{2m}-x^{m+1}+x-1= (x^m-1)(x^m     +1)-x(x^m-1) = (x^m-1)(x^m-x+1).$$
Hence we obtain
$$G_m(x)=- x^{m+1} (x^{m-1}-1)\left( x^m-x+1\right)^{-1}.$$
In the interval $x\in (0,1)$ the function $x^m-x$ takes its minimum at $x=m^{-\frac{1}{m-1}}$, and at this point $x^m-x=m^{-\frac{1}{m-1}}(m^{-1}-1)$. In particular we observe that the Taylor series for $G_m(x)$ is absolutely convergent in the interval $x\in (0,1)$.\par

We can now deduce an asymptotic for $F_m(B,p)$. 

\begin{lemma}\label{lem6}
Let $m\geq 2$. 
Let $p$ be a prime number. Then we have
$$F_m(B,p)=C_mB^{\frac{1}{m}} \left(p^{-1}+G_m\left(p^{-\frac{1}{m}}\right)\right)+ O_m\left(B^{\kap_m}p^{-m\kap_m}\right).$$
Here the implicit constant is independent of $p$.
\end{lemma}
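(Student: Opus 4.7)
Starting from identity \eqref{eqn25}, the plan is to substitute the asymptotic \eqref{eqnm} into each term $F_m(Bp^{-\mu})$ and then extend the (necessarily finite) truncated sum to the full series defining $G_m(p^{-1/m})$. Since $F_m(X)=0$ for $X<1$, the summation in \eqref{eqn25} is supported on $\{\mu:p^\mu\leq B\}$. The degenerate case $p^m>B$ can be handled directly: then $F_m(B,p)=0$, while the claimed main term $C_mB^{1/m}(p^{-1}+G_m(p^{-1/m}))$ can be shown to be $\ll_m B^{\kap_m}p^{-m\kap_m}$ using $B^{1/m}<p$ and $G_m(x)=O(x^{m+1})$ near $0$. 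So I may assume $p^m\leq B$, and termwise substitution of \eqref{eqnm} yields a candidate main term
$$C_m B^{1/m}\Bigl(p^{-1}+\sum_{\substack{\mu\geq m+1\\ p^\mu\leq B}}a_m(\mu)\,p^{-\mu/m}\Bigr)$$
together with an error $O_m\bigl(B^{\kap_m}\sum_{\mu\geq m}|a_m(\mu)|\,p^{-\mu\kap_m}\bigr)$, with the convention $a_m(m)=1$.

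The key input is the closed form $G_m(x)=-x^{m+1}(x^{m-1}-1)/(x^m-x+1)$ derived above. The denominator is bounded away from zero on a complex disk of radius $R>1$ containing $[0,1]$ (since $x^m-x+1>0$ there and depends continuously on $x$), so $G_m$ is holomorphic in that disk, and its Taylor coefficients satisfy a geometric bound $|a_m(\mu)|\ll_m R^{-\mu}$ which is uniform in $p$. This yields two things at once. First, factoring $p^{-m\kap_m}$ out of the error sum and using $p^{-(\mu-m)\kap_m}\leq 1$ for $\mu\geq m$ bounds it by $O_m\bigl(B^{\kap_m}p^{-m\kap_m}\sum_\mu|a_m(\mu)|\bigr)=O_m(B^{\kap_m}p^{-m\kap_m})$. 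Second, a geometric summation estimate gives
$$C_m B^{1/m}\sum_{p^\mu>B}|a_m(\mu)|\,p^{-\mu/m}\ll_m B^{-\log R/\log p},$$
and the hypothesis $\log B\geq m\log p$ yields $B^{-\log R/\log p}\leq B^{\kap_m}p^{-m\kap_m}$, so extending the truncated sum to the full series $G_m(p^{-1/m})$ introduces only an admissible error. Assembling the main and error contributions delivers the stated asymptotic.

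The one real obstacle is that the bound on $|a_m(\mu)|$ must be \emph{uniform in $p$}; this is precisely what the explicit rational formula for $G_m$ supplies, via a radius of convergence strictly greater than $1$ depending only on $m$. Everything else is routine termwise substitution and geometric summation.
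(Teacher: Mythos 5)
Your argument has a genuine gap in the key analytic estimate. You claim that ``the denominator is bounded away from zero on a complex disk of radius $R>1$ containing $[0,1]$'', and from this you deduce a uniform geometric bound $|a_m(\mu)|\ll_m R^{-\mu}$ with $R>1$. This is false. For $m=2$ the polynomial $x^2-x+1$ has roots $e^{\pm i\pi/3}$ of modulus exactly $1$, so the radius of convergence of $G_2$ is $1$ and the coefficients $a_2(\mu)$ do not tend to $0$ (one computes $|a_2(\mu)|\leq 2$ with equality infinitely often). For $m=3$ the situation is worse: $x^3-x+1$ has a real root near $-1.32$ and a conjugate pair of modulus $\approx 0.87<1$, so the radius of convergence of $G_3$ is strictly less than $1$. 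Your parenthetical justification only controls the denominator on the real segment $[0,1]$; positivity there does not propagate to a complex disk of radius exceeding one.

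The failure propagates to the error estimate: your bound ``factoring $p^{-m\kap_m}$ out and using $p^{-(\mu-m)\kap_m}\leq 1$ bounds it by $O_m(B^{\kap_m}p^{-m\kap_m}\sum_\mu|a_m(\mu)|)$'' requires $\sum_\mu|a_m(\mu)|<\infty$, which fails in the cases above. The repair is exactly what the paper does: replace $p^{-(\mu-m)\kap_m}\leq 1$ by the sharper $p^{-(\mu-m)\kap_m}\leq 2^{-(\mu-m)\kap_m}$ (using $p\geq2$), which retains a geometric factor with ratio $2^{-\kap_m}<1$, so the sum converges as long as the radius of convergence exceeds $2^{-\kap_m}$ --- a much weaker requirement than $R>1$. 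With this repair the rest of your termwise substitution goes through, and in fact the case split $p^m\lessgtr B$ and the separate tail estimate become unnecessary, since \eqref{eqnm} is valid for all $B>0$ and already absorbs the terms where $Bp^{-\mu}<1$.
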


\begin{proof}
We start in recalling equation (\ref{eqn25}) 
\begin{equation*}
F_m(B,p) = F_m(Bp^{-m}) +\sum_{\mu=m+1}^\infty a_m(\mu) F_m(Bp^{-\mu}).
\end{equation*}
From the asymptotic formula in (\ref{eqnm}) we deduce that
\begin{equation*}
\begin{split}
F_m(B,p) = &C_m(Bp^{-m})^{\frac{1}{m}} +C_m\sum_{\mu=m+1}^\infty a_m(\mu) (Bp^{-\mu})^{\frac{1}{m}}\\ &+O_m\left((Bp^{-m})^{\kap_m}+ \sum_{\mu=m+1}^\infty |a_m(\mu)| (Bp^{-\mu})^{\kap_m}  \right),\\
=& C_mB^{\frac{1}{m}} \left(p^{-1}+G_m\left(p^{-\frac{1}{m}}\right)\right)\\& +O_m\left(B^{\kap_m}p^{-m\kap_m}+B^{\kap_m}p^{-m\kap_m}\sum_{\mu=m+1}^{\infty}|a_m(\mu)|2^{(-\mu+m)\kap_m}\right).
\end{split}
\end{equation*}
The last sum is absolutely convergent (consider the generating function $x^{-m}G_m(x)$ at the point $x=2^{-\kap_m}$) and hence  we have established the asymptotic
\[F_m(B,p)=C_mB^{\frac{1}{m}} \left(p^{-1}+G_m\left(p^{-\frac{1}{m}}\right)\right)+ O_m\left(B^{\kap_m}p^{-m\kap_m}\right). \qedhere
\]
\end{proof}

Next we aim to generalize Lemma \ref{lem6} to obtain an asymptotic formula for $F_m(B,d)$ for a general square-free number $d$. For this we start with a generalization of Lemma \ref{lem5}.

\begin{lemma}\label{lem7}
Let $d>0$ be a square-free integer and $p$ a prime with $p\mid d$. Write $d'=d/p$. For $m\geq 2$ one has
\begin{equation*}
\begin{split}
F_m(B,d) = &F_m(Bp^{-m},d') +\sum_{r=1}^{\infty} \sum_{k=2r}^{2r(m-1)}\rho_m(k,2r)F_m\left(Bp^{-(2r+1)m-k},d'\right)\\
&+ \sum_{r=1}^\infty\sum_{k=2r-1}^{(2r-1)(m-1)}\rho_m(k,2r-1)F_m\left(Bp^{-(2r-1)m-k},d'\right)\\
&-\sum_{r=1}^\infty \sum_{k=2r-1}^{(2r-1)(m-1)}\rho_m(k,2r-1)F_m\left(Bp^{-2rm-k},d'\right)\\
&-\sum_{r=1}^\infty\sum_{k=2r}^{2r(m-1)}\rho_m(k,2r)F_m\left(Bp^{-2rm-k},d'\right). 
\end{split} 
\end{equation*}
\end{lemma}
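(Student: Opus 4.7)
The plan is to follow step by step the argument used in the proof of Lemma \ref{lem5}, carrying the additional divisibility condition $d'\mid y$ through unchanged. The crucial observation that makes this possible is that $d$ is square-free and $p\mid d$, so $\gcd(p,d')=1$. Consequently, for any nonnegative integer $K$, the condition ``$d'\mid y$'' is independent of the condition ``$p^K\| y$'', and in particular the decomposition of integers by their $p$-adic valuation respects the constraint $d'\mid y$.

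First I would reinterpret the terms on the right-hand side as
\[
F_m(Bp^{-K},d') = \sharp\{1\leq p^K l\leq B\,:\, l\text{ is }m\text{-full},\ d'\mid l\},
\]
which uses $\gcd(p,d')=1$: if $d'\mid p^K l$ then $d'\mid l$, and conversely. Next I would replay verbatim the three rewriting steps of the proof of Lemma \ref{lem5}, applied to the identity
\begin{equation*}
\begin{split}
\sharp\{1\leq p^K l\leq B, \ l\text{ is }m\text{-full},\ d'\mid l\}
&= \sharp\{1\leq p^K l\leq B,\ l\text{ is }m\text{-full},\ d'\mid l,\ p\nmid l\}\\
&\quad +\sharp\{1\leq p^{K+m}l\leq B,\ l\text{ is }m\text{-full},\ d'\mid l\}\\
&\quad +\sum_{k=1}^{m-1}\sharp\{1\leq p^{K+m+k}l\leq B,\ l\text{ is }m\text{-full},\ d'\mid l,\ p\nmid l\},
\end{split}
\end{equation*}
which is the exact analogue of \eqref{eqn20b} and holds for the same reason (separating the $m$-full integer $l$ according to $v_p(l)\in\{0\}\cup\{m,m+1,\dots,2m-1\}\cup\{\geq 2m\}$), completely unaffected by the extra side-condition $d'\mid l$ since $p\nmid d'$.

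After making these substitutions, the cancellations that telescope the sums in the proof of Lemma \ref{lem5} are identical: every intermediate term carries the tag ``$d'\mid l$'' and cancels with another term carrying the same tag. The final step collapses the right-hand side to $\sharp\{1\leq l\leq B\,:\,l\text{ is }m\text{-full},\ p\mid l,\ d'\mid l\}$, which equals $F_m(B,d)$ because $d=pd'$ with $\gcd(p,d')=1$. No step of the argument is harder than in Lemma \ref{lem5}; the only thing to verify is that coprimality of $p$ and $d'$ is preserved under each manipulation, which is immediate from the square-freeness of $d$. Hence no separate ``main obstacle'' arises, and the lemma follows by an essentially mechanical adaptation of the earlier proof.
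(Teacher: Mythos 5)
Your proposal matches the paper's own treatment: the paper simply states that the proof of Lemma \ref{lem7} is the same as that of Lemma \ref{lem5} with ``$l$ is $m$-full'' replaced by ``$l$ is $m$-full and $d'\mid l$''. You correctly identify and justify the one point that makes this mechanical adaptation legitimate, namely $\gcd(p,d')=1$ coming from squarefreeness of $d$, so the argument is sound and essentially identical to the paper's.
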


The proof of Lemma \ref{lem7} is exactly the same as the proof of Lemma \ref{lem5} where the condition $l$ is $m$-full is replaced by the condition that $l$ is $m$-full and $d'\mid l$. Moreover, as in equation (\ref{eqn25}) one can rewrite the identity from Lemma \ref{lem7} as
\begin{equation}\label{eqn27}
F_m(B,d) = F_m\left(Bp^{-m},d'\right) +\sum_{\mu=m+1}^\infty a_m(\mu) F_m\left(Bp^{-\mu},d'\right).
\end{equation}
Via induction on the number of prime factors of $d$ we now establish the following lemma.

\begin{lemma}\label{lem8}
 Let $d>0$ be a square-free integer. Write $\ome(d)$ for the number of prime divisors of $d$. Then for each integer $m\geq 2$ there exists a positive constant $K_m$ such that we have
$$F_m(B,d)=C_mB^{\frac{1}{m}}\prod_{p\mid d} \left(p^{-1}+G_m\left(p^{-\frac{1}{m}}\right)\right)+ O_m\left(K_m^{\ome(d)}B^{\kap_m}d^{-m\kap_m}\right).$$
Here the implicit constant is independent of $d$ and
\begin{equation}
\label{eq:G_m_explicit}
p^{-1}+G_{m}\left(p^{-\frac{1}{m}}\right)=\frac1{1+p-p^\frac{m-1}m}.
\end{equation}
For $m=1$ the asymptotic holds with $C_1=1$, $K_1=1$, $\kap_1=0$ (and $G_1=0$).
\end{lemma}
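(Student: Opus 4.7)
The plan is to argue by induction on $\omega(d)$, with base case $\omega(d)=1$ being Lemma \ref{lem6} and the inductive step driven by equation \eqref{eqn27}. Along the way we verify the closed form \eqref{eq:G_m_explicit} by a direct algebraic manipulation of the formula for $G_m(x)$ derived above.

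For the explicit identity, substitute $x=p^{-1/m}$ into $G_m(x)=-x^{m+1}(x^{m-1}-1)/(x^m-x+1)$, using $x^m=p^{-1}$, $x^{m-1}=p^{-(m-1)/m}$, $x^{m+1}=p^{-(m+1)/m}$. Multiplying numerator and denominator by $p$ gives $G_m(p^{-1/m}) = (p^{-1/m}-p^{-1})/(1+p-p^{(m-1)/m})$, and then
\[
p^{-1}+G_m(p^{-1/m})
=\frac{p^{-1}(1+p-p^{(m-1)/m})+p^{-1/m}-p^{-1}}{1+p-p^{(m-1)/m}}
=\frac{1}{1+p-p^{(m-1)/m}},
\]
after noting the cancellation $p^{-1}\cdot p^{(m-1)/m}=p^{-1/m}$. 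This proves \eqref{eq:G_m_explicit}, and in particular shows the main term factor is positive and bounded.

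For the induction, write $d=pd'$ with $p\nmid d'$ and $\omega(d')=\omega(d)-1$, and apply \eqref{eqn27} together with the inductive hypothesis to each $F_m(Bp^{-\mu},d')$ appearing on the right-hand side (including $\mu=m$). The main terms assemble as
\[
C_m B^{1/m}\prod_{q\mid d'}\bigl(q^{-1}+G_m(q^{-1/m})\bigr)\cdot\left(p^{-1}+\sum_{\mu=m+1}^\infty a_m(\mu)p^{-\mu/m}\right),
\]
and the parenthesized factor is exactly $p^{-1}+G_m(p^{-1/m})$ by definition of $G_m$. The error terms sum to
\[
O_m\!\left(K_m^{\omega(d')}B^{\kap_m}(d')^{-m\kap_m}p^{-m\kap_m}\left(1+\sum_{\mu=m+1}^\infty |a_m(\mu)|p^{-(\mu-m)\kap_m}\right)\right),
\]
and since $p\geq 2$ and $\kap_m=1/(m+1)<1/m$, the residual series is bounded term-by-term by $\sum |a_m(\mu)|2^{-(\mu-m)\kap_m}$, which converges by the absolute convergence of $x^{-m}G_m(x)$ at $x=2^{-\kap_m}$ established in the proof of Lemma \ref{lem6}. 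Call this uniform bound $L_m$; then the inductive error is $O_m(L_m K_m^{\omega(d')} B^{\kap_m}d^{-m\kap_m})$.

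The one point that needs care is choosing $K_m$ once and for all so that the induction closes. We first fix an absolute constant $K_m^{(0)}$ so that the base case (Lemma \ref{lem6}) gives error bound $K_m^{(0)}B^{\kap_m}p^{-m\kap_m}$, and then set $K_m:=\max(K_m^{(0)},c_mL_m)$ where $c_m$ is the implied constant in the inductive step above; with this choice the exponential factor $K_m^{\omega(d)}$ absorbs each inductive inflation. For $m=1$ the formula is the trivial count $F_1(B,d)=\lfloor B/d\rfloor = B/d+O(1)$, giving $C_1=1$, $K_1=1$, $\kap_1=0$ and $G_1=0$. The main anticipated obstacle is nothing conceptual but this bookkeeping: verifying that the constant $K_m$ in the error bound is independent of $\omega(d)$, and checking that the tail sum $\sum_\mu|a_m(\mu)|p^{-(\mu-m)\kap_m}$ admits a bound uniform in $p$, both of which reduce to the convergence properties of $G_m$ already worked out in the preceding analysis.
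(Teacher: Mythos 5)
Your proof follows exactly the paper's strategy: induction on $\omega(d)$ via equation \eqref{eqn27}, with Lemma \ref{lem6} (or $d=1$) as the base case and the uniform bound on the tail $\sum_\mu|a_m(\mu)|p^{-(\mu-m)\kap_m}$ ensuring the induction closes. Your direct verification of \eqref{eq:G_m_explicit} is a welcome addition the paper omits. One small bookkeeping wrinkle: the inductive step inflates the error by a factor $1+\sum_\mu|a_m(\mu)|q^{-(\mu-m)\kap_m}\leq 1+L_m$, not merely $L_m$, so the condition to close the induction with a fixed implied constant is $K_m\geq 1+L_m$; your choice $K_m=\max(K_m^{(0)},c_mL_m)$ is not transparently $\geq 1+L_m$ (since $c_m$ is an implied constant, not a free parameter), whereas the paper simply defines $K_m:=1+\sum_{\mu\geq m+1}|a_m(\mu)|2^{-\kap_m(\mu-m)}$ so that the inflation factor is exactly absorbed. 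This is easily repaired (e.g.\ $K_m:=K_m^{(0)}+1+L_m$) and does not affect the substance of the argument.
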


\begin{proof}
For $m=1$ the statement is immediate. Let us assume that $m\geq 2$.
If $d$ is prime, then the statement follows from Lemma \ref{lem6} (or note that if $d=1$ then the statement reduces to the assumption in (\ref{eqnm})). Let $d>0$ be squarefree and $q$ a prime with $q\mid d$. Assume that we have established the asymptotic
$$F_m\left(B,d'\right)=C_mB^{\frac{1}{m}}\prod_{p\mid d'} \left(p^{-1}+G_m\left(p^{-\frac{1}{m}}\right)\right)+ O_m\left(K_m^{\ome(d')}B^{\kap_m}d'^{-m\kap_m}\right),$$
with a constant $K_m$ given by
$$K_m:= 1+\sum_{\mu=m+1}^{\infty}|a_m(\mu)| 2^{-\kap_m(\mu-m)}.$$
Note that $K_m$ is indeed a convergent sum. Then by Lemma \ref{lem7} and equation (\ref{eqn27}) we deduce that
\begin{equation*}
\begin{split}
F_m(B,d)=& F_m\left(Bq^{-m},d'\right) +\sum_{\mu=m+1}^\infty a_m(\mu) F_m\left(Bq^{-\mu},d'\right)\\
=&C_mB^{\frac{1}{m}}\prod_{p\mid d'} \left(p^{-1}+G_m\left(p^{-\frac{1}{m}}\right)\right)\left(q^{-1}+\sum_{\mu=m+1}^\infty a_m(\mu) q^{-\frac{\mu}{m}}\right)\\
&+ O_m\left(K_m^{\ome(d')}B^{\kap_m}q^{-m\kap_m}d'^{-m\kap_m}\left(1+\sum_{\mu=m+1}^{\infty}|a_m(\mu)| q^{-\kap_m(\mu-m)}\right)\right).
\end{split}
\end{equation*}
By definition of $K_m$ we obtain 
\begin{equation*}
\begin{split}
F_m(B,d)=&C_mB^{\frac{1}{m}}\prod_{p\mid d} \left(p^{-1}+G_m\left(p^{-\frac{1}{m}}\right)\right)\\ &+ O_m\left(K_m^{\ome(d')}B^{\kap_m}q^{-m\kap_m}d'^{-m\kap_m}\left(1+\sum_{\mu=m+1}^{\infty}|a_m(\mu)| 2^{-\kap_m(\mu-m)}\right)\right)\\
=&C_mB^{\frac{1}{m}}\prod_{p\mid d} \left(p^{-1}+G_m\left(p^{-\frac{1}{m}}\right)\right)+ O_m\left(K_m^{\ome(d')+1}B^{\kap_m}(qd')^{-m\kap_m}\right)\\
=& C_mB^{\frac{1}{m}}\prod_{p\mid d} \left(p^{-1}+G_m\left(p^{-\frac{1}{m}}\right)\right)+ O_m\left(K_m^{\ome(d)}B^{\kap_m}d^{-m\kap_m}\right).
\qedhere
\end{split}
\end{equation*}
\end{proof}

Next we given an upper bounds for the leading constant in Lemma \ref{lem8}. For squarefree $d$ we introduce the notation
\begin{equation}
\label{eq:c_md_explicit}
c_{m,d}=C_m\prod_{p\mid d} \left(p^{-1}+G_m\left(p^{-\frac{1}{m}}\right)\right).
\end{equation}
We observe that $c_{1,d}=1/d$ and we recall that 
\begin{equation}\label{eqn10}
	p^{-1}+G_{m}\left(p^{-\frac{1}{m}}\right)=\frac1{1+p-p^\frac{m-1}m}.
\end{equation}

	For every fixed $m\geq 1$ there is a positive constant $c_2(m)<1$ such that
	$$p^{\frac{m-1}{m}}\leq c_2(m) p,$$
	holds for all primes $p\geq 2$. Hence we deduce from equation (\ref{eqn10}) 
	that there exists a constant $c_3(m)$, only depending on $m$, such that
	\begin{equation}\label{eqn11}
	\prod_{p\mid d}  \left(p^{-1}+G_{m}\left(p^{-\frac{1}{m}}\right)\right) 
	\leq c_3(m)^{\ome(d)}\frac{1}{d}.
	\end{equation}
	Hence, we get
	\begin{equation}
	\label{eq:c_md_estimate}
	c_{m,d}
	\ll C_m c_3(m)^{\ome(d)}\frac{1}{d}
	\ll_{m,\varepsilon}d^{-1+\varepsilon}.
	\end{equation}
	
	Moreover, for $m\geq 2$ we have 
\begin{equation}
\label{eq:inequality_mkappa_m}
m k_m=\frac m{m+1}=1-\frac 1{m+1}\geq \frac 23. 
\end{equation}
	
\subsection{An auxiliary counting function}
\label{sec:counting_function_f}
For any $s$-tuple of positive integers $\mathbf m=(m_1,\dots,m_s)$ and any $s$-tuple of squarefree positive integers $\mathbf d=(d_1,\dots,d_s)$,
let $f_{\mathbf m, \mathbf d} : \ZZ_{>0}^s \to \RR$ be the function defined by 
$$ 
f_{\mathbf m, \mathbf d}(y_1,\dots,y_s)=\begin{cases}
1 & \text{ if $d_i\mid y_i$ and $y_i$ is $m_i$-full $ \forall i\in\{1,\dots,s\}$,}\\
0 & \text{ otherwise.}
\end{cases}
$$
\begin{lemma}
\label{lem:counting_function_f}
Let $\mathbf m=(m_1,\dots,m_s)\in\ZZ_{>0}^s$. For every $s$-tuple of squarefree positive integers $\mathbf d=(d_1,\dots,d_s)$ and every $\varepsilon>0$ we have
\begin{multline*}
\sum_{1\leq y_i\leq B_i, 1\leq i\leq s} f_{\mathbf m, \mathbf d}(y_1,\dots,y_s) \\
=\left(\prod_{i=1}^s c_{m_i,d_i}\right)\prod_{i=1}^s B_i^{\frac1{m_i} }
+O_{\mathbf m, \varepsilon}\left( \left(\prod_{i=1}^sd_i\right)^{-\frac23+\varepsilon}\left( \prod_{i=1}^s B_i^{\frac1{m_i}}\right)\left( \min_{1\leq i\leq s} B_i\right)^{-\delta}\right)
\end{multline*}
for all $B_1,\dots,B_s>0$, 
where $\delta=\min \{ 1/3 , \min_{1\leq i\leq s}1/(m_i(m_i+1)) \}$.
\end{lemma}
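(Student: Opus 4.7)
The first observation is that $f_{\mathbf m,\mathbf d}$ is multiplicative across coordinates, so the sum factors:
$$\sum_{1\leq y_i\leq B_i,\ 1\leq i\leq s} f_{\mathbf m,\mathbf d}(y_1,\dots,y_s) = \prod_{i=1}^s F_{m_i}(B_i,d_i),$$
using the notation $F_m(B,d)$ from \eqref{eq:FmBd_defin}. Then Lemma \ref{lem8} gives, for each factor,
$$F_{m_i}(B_i,d_i) = c_{m_i,d_i} B_i^{1/m_i} + E_i,\qquad |E_i|\ll_{m_i} K_{m_i}^{\omega(d_i)} B_i^{\kappa_{m_i}} d_i^{-m_i\kappa_{m_i}}.$$
Expanding the product yields the main term $\prod_i c_{m_i,d_i}B_i^{1/m_i}$ and a sum of $2^s-1$ mixed terms
$T_S:=\prod_{i\in S}E_i\prod_{i\notin S}c_{m_i,d_i}B_i^{1/m_i}$
indexed by nonempty subsets $S\subseteq\{1,\dots,s\}$. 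The plan is to bound each $T_S$ by the claimed error.

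For bounding $T_S$, I would combine three ingredients: the estimate $c_{m_i,d_i}\ll d_i^{-1+\varepsilon}$ from \eqref{eq:c_md_estimate}, the standard bound $K_{m_i}^{\omega(d_i)}\ll_\varepsilon d_i^\varepsilon$, and the inequality $m_i\kappa_{m_i}\geq 2/3$ for $m_i\geq 2$ from \eqref{eq:inequality_mkappa_m}. Multiplying out the resulting bound for $T_S$ and comparing with the target $(\prod_j d_j)^{-2/3+\varepsilon}\prod_j B_j^{1/m_j}(\min_j B_j)^{-\delta}$, one finds that each $i\in S$ with $m_i\geq 2$ contributes a $d_i$-exponent $\leq -2/3+O(\varepsilon)$ and a $B_i$-savings of $B_i^{\kappa_{m_i}-1/m_i}=B_i^{-1/(m_i(m_i+1))}$, which is at most $B_i^{-\delta}\leq(\min_j B_j)^{-\delta}$ by the definition of $\delta$; each $i\notin S$ contributes a $d_i$-exponent $-1/3\leq 0$ and no $B_i$-change.

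The main technical point is the case $m_i=1$ with $i\in S$, where the Lemma \ref{lem8} bound $|E_i|\ll 1$ carries no $d_i$-savings and creates a "bad" factor $d_i^{2/3-\varepsilon}B_i^{-1}$ in the ratio of $T_S$ to the target. I would resolve this by a case split. If $d_i\leq B_i$ for every such $i$, then $d_i^{2/3-\varepsilon}B_i^{-1}\leq B_i^{-1/3-\varepsilon}\leq B_i^{-\delta}$ since $\delta\leq 1/3$, and the argument of the preceding paragraph goes through. If instead there exists $i_*$ with $d_{i_*}>B_{i_*}^{m_{i_*}}$, then $F_{m_{i_*}}(B_{i_*},d_{i_*})=0$ by definition of $m_i$-fullness (a multiple of $d_{i_*}$ that is $m_{i_*}$-full must be at least $d_{i_*}^{m_{i_*}}$), so the entire left hand side vanishes and the actual error equals minus the main term; using $d_{i_*}^{-1/3}<B_{i_*}^{-m_{i_*}/3}\leq B_{i_*}^{-1/3}$ together with $\min_j B_j\leq B_{i_*}$ and $\delta\leq 1/3$, the main term $\ll\prod_j d_j^{-1+\varepsilon}B_j^{1/m_j}$ is absorbed into the claimed bound.

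The remaining work is just bookkeeping: summing over the constantly many subsets $S$ and combining the factors $B_i^{-e_i}$ (where $e_i\geq\delta$) into the single factor $(\min_j B_j)^{-\delta}$, using $B_i\geq\min_j B_j\geq 1$ whenever $B_i\geq 1$ (the case $B_i<1$ forcing the left hand side to be zero is trivial). I expect no conceptual obstacle beyond the uniform tracking of $d_i$- and $B_i$-exponents in the $m_i=1$ subcase.
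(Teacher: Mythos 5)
Your proof is correct and follows the same route as the paper: factor the sum as $\prod_i F_{m_i}(B_i,d_i)$, estimate each factor, and expand, using \eqref{eq:c_md_estimate} and \eqref{eq:inequality_mkappa_m} to control the error. The only deviation is in the $m_i=1$ factors: the paper plugs in the sharper elementary bound $F_1(B_i,d_i)=B_i/d_i+O((B_i/d_i)^{2/3})$, whose error already carries a $d_i^{-2/3}$ factor, so that no case distinction is needed and the $m_i=1$ indices behave exactly like the $m_i\geq 2$ ones (with $2/3$ in place of $m_i\kappa_{m_i}$ and $1/3$ in place of $1/(m_i(m_i+1))$). You instead apply Lemma \ref{lem8} uniformly, which for $m=1$ gives only $|E_i|\ll 1$ with no $d_i$-decay, and then recover the missing savings by a case split on whether $d_i\leq B_i$ for every $m_i=1$ index, exploiting that $F_1(B_i,d_i)=0$ in the complementary case. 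Your exponent bookkeeping and the vanishing argument are sound, so the conclusion goes through; but the paper's choice of $F_1$-estimate makes the argument uniform across all $m_i$ and avoids the extra casework entirely, which is worth knowing for next time.
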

\begin{proof}
We observe that 
$$
\sum_{1\leq y_i\leq B_i, 1\leq i\leq s} f_{\mathbf m, \mathbf d}(y_1,\dots,y_s) = \prod_{i=1}^s F_{m_i}(B_i,d_i).
$$ 
For $i\in\{1,\dots,s\}$ such that $m_i\geq 2$ apply Lemma \ref{lem8}, for $i\in\{1,\dots, s\}$ such that $m_i=1$ use the estimate $F_1(B_i,d_i)= B_i/d_i + O((B_i/d_i)^{2/3})$.
Then apply \eqref{eq:c_md_estimate} and \eqref{eq:inequality_mkappa_m} to estimate the error term.
\end{proof}

Lemma \ref{lem:counting_function_f} implies that the function $f_{\mathbf m, \mathbf d}$ satisfies Property I with the constants
$$ C_{f,M} = \prod_{i=1}^s c_{m_i,d_i}, \quad C_{f,E} =  \left(\prod_{i=1}^sd_i\right)^{-\frac23+\varepsilon},$$
and
$$\varpi_i= \frac{1}{m_i}, \quad 1\leq i\leq s$$
and
$$\vardel = \min \{ 1/3 , \min_{1\leq i\leq s}1/(m_i(m_i+1)) \}.$$

\section{Campana points on toric varieties}
\label{sec:Campana_points_on_toric_varieties}
\subsection{Toric varieties over number fields}

	Let $X$ be a complete smooth split toric variety over a number field $\KK$. 
	Let $T\subseteq X$ be the dense torus.
	Let $\Sigma$ be the fan that defines $X$. 
	We denote by $\{\rho_1,\dots,\rho_s\}$ the set of rays of $\Sigma$ 
	and by $\Sigma_{max}$ the set of maximal cones of $\Sigma$. 
	For every maximal cone $\sigma$ we define $\ii{\sigma}$ to be the set of indices 
	$i\in\{1,\dots,s\}$ such that the ray $\rho_i$ belongs to the cone $\sigma$, 
	and we set $\ic{\sigma}=\{1,\dots,s\}\smallsetminus \ii{\sigma}$.
	Then we have $|\ii{\sigma}|=n$ and $|\ic{\sigma}|=r$ for every maximal cone $\sigma$ of $\Sigma$, 
	where $n$ is the dimension of $X$ and $r$ is the rank of the Picard group of $X$. 
	In particular, $s=n+r$.
	For each $i\in\{1,\dots,s\}$, we denote by $D_i$ the prime toric invariant divisor 
	corresponding to the ray  $\rho_i$. We fix a canonical divisor $K_X:=-\sum_{i=1}^s D_i$.

	By \cite{MR1299003} the Cox ring of $X$ is $\KK[y_1,\dots,y_s]$ 
	where the degree of the variable $y_i$ is 
	the class of the divisor $D_i$ in $\pic(X)$.
	For every $\mathbf y=(y_1,\dots,y_s)\in \CC^{s}$ and every $D=\sum_{i=1}^s a_i D_i$, 
	let \[\mathbf y^D:=\prod_{i=1}^sy_i^{a_i}.\]
	Let $Y\to X$ be the universal torsor of $X$
	as in \cite[\S8]{MR1679841}. We recall that the variety $Y$ is an open subset of 
	$\A^{s}_{\KK}$ whose complement is defined by $\mathbf y^{D_\sigma}=0$ for all 
	maximal cones $\sigma$, where 
	$
	D_\sigma:=\sum_{i\in\ic{\sigma}}D_i
	$ 
	for all $\sigma\in\Sigma_{max}$.
	
	The integral model $\pi:\mathscr{Y}\to\mathscr{X}$ of the universal torsor $Y\to X$  
	as in \cite[Remarks 8.6]{MR1679841} gives a parameterization of the rational points 
	on $X$ via integral points in $\OO_{\KK}^s=\A^s(\OO_{\KK})$ as follows. 
	Let $\mathcal{C}$ be a set of ideals of $\OO_{\KK}$ that form a system of representatives 
	for the class group of $\KK$. 
	We fix a basis of $\pic(X)$, and
	for every divisor $D$ on $X$ we write $\mathfrak c^{D}:=\prod_{i=1}^r \mathfrak c_i^{b_i}$ where 
	$[D]=(b_1,\dots,b_r)$  with respect to the fixed basis of $\pic(X)$.
	Then, as in \cite[\S2]{MR3514738},
 	\[
 	X(\KK)=\mathscr X(\OO_{\KK})=
 	\bigsqcup_{\mathfrak c\in\mathcal C^{r}}\pi^{\mathfrak c}
 	\left(\mathscr{Y}^{\mathfrak c}(\OO_{\KK})\right),
 	\]
 	where  $\pi^{\mathfrak c}:\mathscr{Y}^{\mathfrak c}\to\mathscr X$ is the twist 
 	of $\pi$ defined in \cite[Theorem 2.7]{MR3552013}.
 	The fibers of $\pi^{\mathfrak c}|_{\mathscr{Y}^{\mathfrak c}(\OO_{\KK})}$ are all isomorphic to 
 	$(\OO_{\KK}^\times)^{r}$, and
	$\mathscr{Y}^{\mathfrak c}(\OO_{\KK})\subseteq \A^s(\OO_{\KK})$ is the subset of 
	points $\mathbf y\in \bigoplus_{i=1}^s\mathfrak c^{D_i}$ that satisfy
	\begin{equation}
	\label{eq:coprimality_condition}
	\sum_{\sigma\in\Sigma_{\max}}\mathbf y^{D_\sigma}\mathfrak c^{-D_\sigma}=\OO_{\KK}.
	\end{equation}
	
	Let $N$ be the lattice of cocharacters of $X$. Then $\Sigma\subseteq N\otimes_\ZZ\RR$.
	For every $i\in\{1,\dots,s\}$, let $\gen{i}$ be the unique generator of $\rho_i\cap N$.
	For every torus invariant divisor $D=\sum_{i=1}^s a_i D_i$ of $X$ and for every 
	$\sigma\in\Sigma_{max}$, let $u_{\sigma,D}$ be the character of $N$ determined by 
	$u_{\sigma,D}(\gen{j})=a_j$ for all $j\in\ii{\sigma}$, and define 
	$D(\sigma):=D-\sum_{i=1}^su_{\sigma,D}(\gen{i})D_i$. 
	Then $D$ and $D(\sigma)$ are linearly equivalent.
	For every $i,j\in\{1,\dots,s\}$, let $\beta_{\sigma,i,j}:=-u_{\sigma,D_{j}}(\gen{i})$.
	Then, for every $i,j \in\{1,\dots,s\}$, we have $\beta_{\sigma,i,j}=0$ whenever 
	$j\in\ic{\sigma}$, and whenever $i \neq j$ are both in $\ii{\sigma}$. Hence,
	\begin{equation}
	\label{eq:D_rho_sigma}
	D_{j}(\sigma)=\begin{cases}
	D_{j} & \text{if } j\in\ic{\sigma},\\
	\sum_{i\in\ic{\sigma}}\beta_{\sigma,i,j}D_{i} & \text{if } j\in\ii{\sigma}.
	\end{cases}
	\end{equation}

	\begin{lemma}
	\label{lem:relation_beta_sigma_sigma'}
	For every $i,j\in\{i,\dots,s\}$ and $\sigma,\sigma'\in\Sigma_{max}$ we have
	\[
	\beta_{\sigma, i,j}
	=-\sum_{l\in \ii{\sigma'}}\beta_{\sigma', i, l}\beta_{\sigma, l, j}. 
	\]
	\end{lemma}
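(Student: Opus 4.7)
The plan is to unfold the definitions on both sides and observe that the identity is simply the statement that $\{\gen{l}:l\in\ii{\sigma'}\}$ is a basis of $N$ (which holds because $\sigma'$ is a maximal cone of the fan of a smooth toric variety) with dual basis $\{u_{\sigma',D_l}:l\in\ii{\sigma'}\}$.

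More precisely, I would proceed as follows. By definition of $u_{\sigma',D_l}$, for every $l,k\in\ii{\sigma'}$ we have $u_{\sigma',D_l}(\gen{k})=\delta_{lk}$. Since $X$ is smooth and complete and $\sigma'$ is a maximal cone, $\{\gen{l}:l\in\ii{\sigma'}\}$ is a $\ZZ$-basis of $N$, so $\{u_{\sigma',D_l}:l\in\ii{\sigma'}\}$ is the corresponding dual basis of $\Hom(N,\ZZ)$. Writing $\gen{i}=\sum_{l\in\ii{\sigma'}}a_l\gen{l}$ and pairing with $u_{\sigma',D_l}$ gives $a_l=u_{\sigma',D_l}(\gen{i})$, hence
\[
\gen{i}=\sum_{l\in\ii{\sigma'}}u_{\sigma',D_l}(\gen{i})\gen{l}.
\]

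Now I apply the character $u_{\sigma,D_j}\in\Hom(N,\ZZ)$ to both sides. By linearity,
\[
u_{\sigma,D_j}(\gen{i})=\sum_{l\in\ii{\sigma'}}u_{\sigma',D_l}(\gen{i})\,u_{\sigma,D_j}(\gen{l}).
\]
Multiplying both sides by $-1$ and using the relations $\beta_{\sigma,i,j}=-u_{\sigma,D_j}(\gen{i})$, $\beta_{\sigma',i,l}=-u_{\sigma',D_l}(\gen{i})$, and $\beta_{\sigma,l,j}=-u_{\sigma,D_j}(\gen{l})$, the right hand side becomes $-\sum_{l\in\ii{\sigma'}}\beta_{\sigma',i,l}\beta_{\sigma,l,j}$ (the two minus signs from the $\beta$'s cancel one of the global signs), which is exactly the claimed identity.

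There is no real obstacle; the only subtlety is to remember that smoothness of $X$ is what guarantees that the primitive ray generators of a maximal cone form a $\ZZ$-basis of $N$ (not merely a $\QQ$-basis of $N\otimes\QQ$), so that the dual-basis interpretation of the characters $u_{\sigma',D_l}$ is literally correct. Everything else is linear algebra in $N$ and $\Hom(N,\ZZ)$.
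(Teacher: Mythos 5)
Your proof is correct, and it takes a genuinely different (though dual) route to the paper's. The paper works on the divisor side: it starts from the identity $D_j(\sigma')=(D_j(\sigma))(\sigma')$ between torus-invariant divisors, expands both sides using the linearization operator $D\mapsto D(\sigma)$, and compares coefficients of the $D_i$ (which are linearly independent). Your argument instead works directly in the cocharacter lattice $N$: you expand $\gen{i}$ in the $\ZZ$-basis $\{\gen{l}:l\in\ii{\sigma'}\}$ of $N$, recognize the coordinates as $u_{\sigma',D_l}(\gen{i})$ via the dual-basis relation $u_{\sigma',D_l}(\gen{k})=\delta_{lk}$, and then apply the character $u_{\sigma,D_j}$. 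The substitution of $\beta$'s at the end is carried out correctly (the extra terms with $l\notin\ii{\sigma'}$ in the paper's sum vanish because $u_{\sigma',D_l}=0$ for such $l$, matching your restriction of the sum). Your version makes the role of smoothness explicit and is arguably more transparent as a piece of linear algebra, while the paper's version integrates more naturally with the $D(\sigma)$ formalism it has already set up and uses again in \eqref{eq:D_rho_sigma} and Lemma \ref{lem:widetildePsigma}.
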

	\begin{proof}
	From the equality $D_{j}(\sigma')=(D_{j}(\sigma))(\sigma')$
	we get
	\begin{align*}
	0&= \sum_{l=1}^s u_{\sigma,D_{j}}(\gen{l})D_{l}(\sigma') 
	= \sum_{l=1}^s u_{\sigma,D_{j}}\left(\gen{l})(D_{l}-\sum_{i=1}^s u_{\sigma',D_{l}}(\gen{i})D_i\right)\\
	&=\sum_{i=1}^s\left( u_{\sigma,D_{j}}(\gen{i}) 
	- \sum_{l=1}^s u_{\sigma,D_{j}(\gen{l})}	u_{\sigma',D_{l}}(\gen{i})D_i\right)D_i. 
	\qedhere
	\end{align*}
	\end{proof}

\subsection{Polytopes}
\label{sec:polytopes_toric}

	In this section, we fix a semiample $\QQ$-divisor $L=\sum_{i=1}^s a_i D_i$, 
	and we study a number of polytopes associated to $L$. 
	The content of this section is purely combinatorial, 
	in particular, it does not depend on the base field $\KK$ where $X$ is defined.
	 
	For each $\sigma\in\Sigma_{\max}$, we write $L(\sigma)=\sum_{i=1}^s\alpha_{i, \sigma}D_i$. 
	Then $\alpha_{i,\sigma}=0$ for all $i\in\ii{\sigma}$ by construction.
	\begin{remark}	
	\label{rem:basepointfree_L(sigma)}
	 Since $L$ is semiample, $L(\sigma)$ is effective for all $\sigma\in\Sigma_{\max}$ 
	 by \cite[Proposition 6.1.1]{MR2810322}; that is, $\alpha_{i,\sigma}\geq0$ for all 
	 $i\in\{1,\dots,s\}$ and all $\sigma\in\Sigma_{\max}$.
	 If, moreover, $L$ is ample, then $\alpha_{i,\sigma}>0$ for all $i\in\ic{\sigma}$ and 
	 all $\sigma\in\Sigma_{\max}$ by \cite[Theorem 6.1.14]{MR2810322}. 
	 \end{remark}
	 
	\begin{assumption}
	\label{assp:L}
	We assume that for every $i\in\{1,\dots,s\}$ there exists $\sigma\in\Sigma_{\max}$ 
	such that $\alpha_{i,\sigma}>0$.
	\end{assumption}
	
	We observe that Assumption \ref{assp:L} is satisfied 
	if $L$ is ample by Remark \ref{rem:basepointfree_L(sigma)}, 
	or if $L$ is  linearly equivalent to an effective divisor 
	$\sum_{i=1}^s b_i D_i$ with $b_i>0$ for all $i\in\{1,\dots,s\}$ 
	by Lemma \ref{lem:effective_cone} below.
	
\subsubsection{The polytope $P_L$} 
	We describe a classical polytope associated to $L$ that we use in 
	Section \ref{sec:heights} to study the height function defined by $L$.

	We denote by $M$ the lattice of characters of $T$, dual to $N$, 
	and by $M_\RR$ the vector space $M\otimes_\ZZ \RR$. 
	Similarly, we set $\pic(X)_\RR:=\pic(X)\otimes_\ZZ\RR$. 
	We recall that there is an exact sequence (e.g. \cite[Theorem 4.2.1]{MR2810322})
	\[
	0
	\to M_\RR
	\to \bigoplus_{i=1}^s \RR D_i 
	\stackrel{\varphi}{\to}\pic(X)_\RR
	\to 0,
	\]
	such that the effective cone $\eff(X)$ of $X$ is the image under $\varphi$ of the cone generated by the 
	effective torus invariant divisors
	\[
	C:=\left \{\sum_{i=1}^s a_i D_i : 
	a_1,\dots, a_s\geq 0 \right \} \subseteq \bigoplus_{i=1}^s \RR D_i.
	\] 
	
	Since $L$ is semiample, 
	\[
	P_L:=\{m\in M_\RR: m(\gen{i})+a_i\geq 0\ \forall i\in\{1,\dots, s\}\}.
	\]	
	 is a polytope with vertices 
	$\{-u_{\sigma,L}:\sigma\in\Sigma_{\max}\}$ 
	by \cite[Proposition 4.3.8, Theorem 6.1.7]{MR2810322}. In particular,
	\begin{equation}
	\label{eq:P_L}
	P_L=\left\{\sum_{\sigma\in\Sigma_{\max}}-\lambda_\sigma u_{\sigma,L}:
	(\lambda_\sigma)_{\sigma\in\Sigma_{\max}}\in\RR_{\geq0}^{\Sigma_{\max}}, 
	\sum_{\sigma\in\Sigma_{\max}}\lambda_\sigma=1\right\}.
	\end{equation}

	\begin{lemma}
	\label{lem:effective_cone}
	For every $t\in\RR_{\geq0}$,
	\[
	(tL+M_\RR)\cap C=
	\left \{\sum_{\sigma\in\Sigma_{\max}}\lambda_\sigma L(\sigma):
	(\lambda_{\sigma})_{\sigma\in\Sigma_{\max}}\in (\RR_{\geq0})^{\Sigma_{\max}},
 	\sum_{\sigma\in\Sigma_{\max}}\lambda_{\sigma}=t \right \}.
 	\]
	\end{lemma}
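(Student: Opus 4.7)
The plan is to prove both inclusions, using that $L(\sigma)=L-\mathrm{div}(u_{\sigma,L})$ is by construction linearly equivalent to $L$, and the vertex description \eqref{eq:P_L} of $P_L$.

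For the inclusion $\supseteq$, I will use two facts. First, $L(\sigma)$ is effective for every $\sigma\in\Sigma_{\max}$ by Remark \ref{rem:basepointfree_L(sigma)}, so any combination $\sum_\sigma\lambda_\sigma L(\sigma)$ with $\lambda_\sigma\geq0$ lies in the cone $C$. Second, $L(\sigma)-L=-\mathrm{div}(u_{\sigma,L})\in M_\RR$, so $\sum_\sigma\lambda_\sigma L(\sigma)-(\sum_\sigma\lambda_\sigma)L\in M_\RR$; imposing $\sum_\sigma\lambda_\sigma=t$ then places the sum in $tL+M_\RR$. This gives the easy direction.

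For the inclusion $\subseteq$, take $D\in(tL+M_\RR)\cap C$ and write $D=tL+\mathrm{div}(m)$ with $m\in M_\RR$. The coefficient of $D_i$ equals $ta_i+m(\gen{i})$, so $D\in C$ says $m(\gen{i})+ta_i\geq0$ for every $i\in\{1,\dots,s\}$. If $t=0$ this forces $m\geq 0$ on every ray generator $\gen{i}$, and since $\Sigma$ is complete the $\gen{i}$ span $N_\RR$, so the linear functional $m$ vanishes and $D=0$, matching the right-hand side (which for $t=0$ only contains $0$). If $t>0$, dividing by $t$ gives $m/t\in P_L$ by the definition of $P_L$. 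By \eqref{eq:P_L} we may write
\[
\frac m t=\sum_{\sigma\in\Sigma_{\max}}-\mu_\sigma u_{\sigma,L},\qquad \mu_\sigma\geq0,\quad\sum_{\sigma\in\Sigma_{\max}}\mu_\sigma=1.
\]
Setting $\lambda_\sigma:=t\mu_\sigma$, we get $\lambda_\sigma\geq 0$, $\sum_\sigma\lambda_\sigma=t$, and $\mathrm{div}(m)=-\sum_\sigma\lambda_\sigma\mathrm{div}(u_{\sigma,L})=\sum_\sigma\lambda_\sigma(L(\sigma)-L)$. Therefore
\[
D=tL+\mathrm{div}(m)=tL+\sum_{\sigma}\lambda_\sigma L(\sigma)-\Bigl(\sum_\sigma\lambda_\sigma\Bigr)L=\sum_{\sigma\in\Sigma_{\max}}\lambda_\sigma L(\sigma),
\]
which is the desired presentation.

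The argument is essentially bookkeeping; there is no genuine obstacle once one recognizes that $(tL+M_\RR)\cap C$ corresponds, via $D\mapsto(D-tL)/t$, to the polytope $tP_L$ of effective representatives of $t[L]$, and that Remark \ref{rem:basepointfree_L(sigma)} and \eqref{eq:P_L} supply respectively the effectivity of each $L(\sigma)$ and the convex hull description by the vertices $-u_{\sigma,L}$ of $P_L$. Assumption \ref{assp:L} is not needed in this lemma; semiampleness of $L$ is used only insofar as it underlies \eqref{eq:P_L} and Remark \ref{rem:basepointfree_L(sigma)}.
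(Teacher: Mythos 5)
Your proof is correct and follows the same underlying approach as the paper's: both hinge on the vertex description \eqref{eq:P_L} of $P_L$ (equivalently, that $L+P_L$ is the convex hull of the $L(\sigma)$). The paper simply cites \cite[Exercise 4.3.2]{MR2810322} for the identity $(tL+M_\RR)\cap C = tL+P_{tL} = t(L+P_L)$, whereas you unfold that identity into the two explicit inclusions; your handling of the $t=0$ edge case (using completeness of $\Sigma$ to force $m=0$) is a correct detail the paper's one-line proof leaves implicit.
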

	\begin{proof}
	By \eqref{eq:P_L} $L+P_L$ is the polytope with vertices $\{L(\sigma):\sigma\in\Sigma_{\max}\}$.
	Since $(tL+M_\RR)\cap C=tL+P_{tL}=t(L+P_L)$ 
	by \cite[Exercise 4.3.2]{MR2810322}, the statement follows.
	\end{proof}

	\begin{remark}
	\label{rem:L_weights}
	If $L$ is ample, then there exists a positive integer $t$ 
	such that $[t^{-1}L]=\left [\sum_{i=1}^s\frac 1{m_i} D_i \right]$ with $m_1,\dots,m_s\in\ZZ_{>0}$. 
	Indeed, $[L]$ has at least one representative of the form $\sum_{i=1}^sb_i D_i$ 
	with $b_1,\dots,b_s\in \QQ_{>0}$ by Lemma \ref{lem:effective_cone} 
	and Remark \ref{rem:basepointfree_L(sigma)}. 
	Hence, it suffices to choose any positive integer $t$ such that 
	$t b_i^{-1}\in\ZZ$ for all $i\in\{1,\dots,s\}$.
	\end{remark}

\subsubsection{The polytope $\widetilde P$}
	We investigate some polytopes associated to $[L]$ that we use for 
	the application of the hyperbola method
	in  Sections \ref{sec:heuristics}--\ref{sec:proof_thmtoric}.
	
	We identify $\RR^{s}$ with the space of linear functions on 
	$\bigoplus_{i=1}^s\RR D_i$ by defining
	$\mathbf t (D_i)=t_i$ for all $i\in\{1,\dots,s\}$ and all $\mathbf t=(t_1,\dots,t_s)\in\RR^s$.
	Under this identification, the dual of $\pic(X)$ is 
	the linear subspace $\widetilde H$ of $\RR^{ s }$ defined by 
 	\begin{equation}
 	\label{eq:dual_picard}
 	t_{j}
 	=\sum_{i\in\ic{\sigma}} \beta_{\sigma, i,j}t_i 
 	\quad \forall j\in \ii{\sigma},
 	\end{equation}
	for one, or equivalently all, $\sigma\in\Sigma_{\max}$ 
	(cf.~Lemma \ref{lem:relation_beta_sigma_sigma'}).	
	
	Let $\widetilde P \subseteq \RR^{s}$ be the polyhedron 
 	defined by 
 	\begin{equation}
 	\label{eq:widetildeP}
 	t_i\geq0 \ \forall i\in\{1,\dots,s\} \quad \text{and} \quad
 	\sum_{i=1}^s\alpha_{i,\sigma} t_i\leq 1 \ 
 	\forall \sigma\in \Sigma_{\max}.
 	\end{equation} 	
 	Then $\widetilde P$ is a full dimensional convex polytope by 
 	Remark \ref{rem:basepointfree_L(sigma)} and Assumption \ref{assp:L}.
 	Moreover, $\cone(\widetilde P)$ is dual to the cone 
 	$C$ defined above.
	For every $\sigma\in\Sigma_{\max}$, let \[
	\widetilde P_\sigma:=\widetilde P \cap 
	\cone \left (
	\widetilde P\cap \left \{
	\sum_{i=1}^s\alpha_{i,\sigma} t_i =1 
	\right \} \right ),
	\]
	so that 
	\begin{equation}	
	\label{eq:widetildeP_union}
	\bigcup_{\sigma \in \Sigma_{\max}}\widetilde P_\sigma=\widetilde P.
	\end{equation}
	We observe that the polytopes $\widetilde P$ and $\widetilde P_\sigma$ depend only 
	on the class of $L$ in $\pic(X)$ and not on the chosen representative  $\sum_{i=1}^s a_i D_i$.

	\begin{lemma}
	\label{lem:widetildePsigma}
	\begin{enumerate}[label = (\roman*), ref=(\roman*)]
	\item 
	\label{lem:item:widetildePsigma_intersection_two}
	For every $\sigma, \sigma'\in\Sigma_{\max}$, 
	\[
	\widetilde P_\sigma \cap \widetilde P_{\sigma'} = 
	\widetilde P_\sigma \cap
	\left \{ 
	t_{j}
 	= \sum_{i\in\ic{\sigma}} \beta_{\sigma, i, j}t_i 
 	\quad \forall j\in  \{l\in \ii{\sigma} : \alpha_{l, \sigma'}\neq0\}.
 	\right \}.
	\]
	\item 
	\label{lem:item:widetildePsigma_intersection_all}
	Under Assumption \ref{assp:L} we have 
	$\bigcap_{\sigma\in\Sigma_{\max}} \widetilde P_{\sigma} =\widetilde P \cap \widetilde H$.
	\item 
	\label{lem:item:widetildePsigma_defin}
	If $L$ is ample, then
	\[
	\widetilde P_\sigma 
	= \widetilde P \cap
	\left \{ 
	t_{j}
 	\leq \sum_{i\in\ic{\sigma}} \beta_{\sigma, i, j}t_i
 	\quad \forall j\in\ii{\sigma}
 	\right \}
 	\]
 	for every $\sigma\in\Sigma_{\max}$.
 	In particular, $\widetilde P_\sigma$ 
 	is the polytope in $\RR^{s}$ defined by 
 	\[
 	t_1,\dots,t_s \geq0, \quad 
 	t_{j}
 	\leq \sum_{i\in\ic{\sigma}} \beta_{\sigma, i,j}t_i
 	\quad \forall j\in\ii{\sigma}, \quad 
 	\sum_{i\in\ic{\sigma}} \alpha_{i,\sigma}t_i \leq 1.
 	\]
	\end{enumerate}
	\end{lemma}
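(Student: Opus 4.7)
Proof proposal. My plan is to reduce all three parts to the single algebraic identity
\begin{equation}\label{eq:key_identity_proposal}
\sum_{i=1}^s\alpha_{i,\sigma'}t_i-\sum_{i=1}^s\alpha_{i,\sigma}t_i
=\sum_{j\in\ii{\sigma}}\alpha_{j,\sigma'}\Bigl(t_j-\sum_{i\in\ic{\sigma}}\beta_{\sigma,i,j}t_i\Bigr),
\end{equation}
valid for all $\sigma,\sigma'\in\Sigma_{\max}$ and all $\mathbf t\in\RR^s$. I would establish it by observing that $L(\sigma)-L(\sigma')$ is the divisor of the character $u_{\sigma',L}-u_{\sigma,L}\in M$, so it vanishes as a linear functional on $\widetilde H$, and then comparing this with the parametrization $t_j=\sum_{i\in\ic{\sigma}}\beta_{\sigma,i,j}t_i$ of $\widetilde H$ ($j\in\ii{\sigma}$) together with the normalization $\alpha_{j,\sigma}=0$ for $j\in\ii{\sigma}$. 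The second basic ingredient is the observation that, writing $F_\sigma:=\widetilde P\cap\{\sum_i\alpha_{i,\sigma}t_i=1\}$, a nonzero $\mathbf t\in\widetilde P$ lies in $\cone(F_\sigma)$ iff $\mathbf t/\lambda\in F_\sigma$ for $\lambda:=\sum_i\alpha_{i,\sigma}t_i>0$, which is equivalent to $\sum_i\alpha_{i,\sigma'}t_i\leq\sum_i\alpha_{i,\sigma}t_i$ for every $\sigma'\in\Sigma_{\max}$. Via \eqref{eq:key_identity_proposal} this recasts the defining conditions of $\widetilde P_\sigma$ as sign information on the quantities $x_j(\mathbf t):=t_j-\sum_{i\in\ic{\sigma}}\beta_{\sigma,i,j}t_i$ indexed by $j\in\ii{\sigma}$.

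For part \ref{lem:item:widetildePsigma_defin} I would fix $j\in\ii{\sigma}$ and exploit the codimension one wall $\cone(\gen{i}:i\in\ii{\sigma}\setminus\{j\})$. By completeness of $\Sigma$, it is shared with a unique adjacent maximal cone $\sigma_j\in\Sigma_{\max}$ with $\ii{\sigma_j}=(\ii{\sigma}\setminus\{j\})\cup\{m\}$ for a unique $m\in\ic{\sigma}$. By Remark \ref{rem:basepointfree_L(sigma)} and ampleness, $\alpha_{j,\sigma_j}>0$; moreover $\alpha_{j',\sigma_j}=0$ for every $j'\in\ii{\sigma}\setminus\{j\}$ since such $j'$ lies in $\ii{\sigma_j}$. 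Then \eqref{eq:key_identity_proposal} applied with $\sigma'=\sigma_j$ collapses to the single term $\alpha_{j,\sigma_j}x_j(\mathbf t)$ on the right-hand side, so the inequality $\sum_i\alpha_{i,\sigma_j}t_i\leq\sum_i\alpha_{i,\sigma}t_i$ built into $\widetilde P_\sigma$ immediately yields $x_j(\mathbf t)\leq 0$. Conversely, once $x_j(\mathbf t)\leq 0$ for every $j\in\ii{\sigma}$ is granted, \eqref{eq:key_identity_proposal} combined with semiampleness ($\alpha_{j,\sigma'}\geq 0$) delivers all the inequalities $\sum_i\alpha_{i,\sigma'}t_i\leq\sum_i\alpha_{i,\sigma}t_i$ that characterize $\widetilde P_\sigma$ inside $\widetilde P$. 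The \emph{in particular} claim then follows because the same nonpositivity lets the single bound $\sum_{i\in\ic{\sigma}}\alpha_{i,\sigma}t_i\leq 1$ together with the conditions $x_j(\mathbf t)\leq 0$ propagate to every bound $\sum_i\alpha_{i,\sigma'}t_i\leq 1$, rendering the remaining defining inequalities of $\widetilde P$ redundant.

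For part \ref{lem:item:widetildePsigma_intersection_two} I would first verify the cone identity $\cone(F_\sigma)\cap\cone(F_{\sigma'})=\cone(F_\sigma\cap F_{\sigma'})$: any representation $\lambda\mathbf s=\lambda'\mathbf s'$ with $\mathbf s\in F_\sigma$, $\mathbf s'\in F_{\sigma'}$ and $\lambda,\lambda'>0$ forces $\lambda=\lambda'$ (by evaluating $\sum_i\alpha_{i,\sigma}$ on both sides together with $\mathbf s,\mathbf s'\in\widetilde P$) and hence $\mathbf s=\mathbf s'\in F_\sigma\cap F_{\sigma'}$. Consequently $\mathbf t\in\widetilde P_\sigma\cap\widetilde P_{\sigma'}$ iff $\mathbf t\in\widetilde P_\sigma$ and $\sum_i\alpha_{i,\sigma'}t_i=\sum_i\alpha_{i,\sigma}t_i$, which by \eqref{eq:key_identity_proposal} reads $\sum_{j\in\ii{\sigma}}\alpha_{j,\sigma'}x_j(\mathbf t)=0$; combined with the pointwise bounds $x_j(\mathbf t)\leq 0$ and $\alpha_{j,\sigma'}\geq 0$, the vanishing of the sum forces each term with $\alpha_{j,\sigma'}\neq 0$ to vanish individually, which is the ``$\subseteq$'' direction; the reverse ``$\supseteq$'' is immediate from \eqref{eq:key_identity_proposal}. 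Part \ref{lem:item:widetildePsigma_intersection_all} then follows at once: intersecting over all $\sigma'$ and invoking Assumption \ref{assp:L} (for each $j\in\ii{\sigma}$ some $\sigma'$ supplies $\alpha_{j,\sigma'}\neq 0$), the combined linear equations $x_j(\mathbf t)=0$ range over every $j\in\ii{\sigma}$ and cut out $\widetilde H$. The principal obstacle I foresee is securing the sign control $x_j(\mathbf t)\leq 0$ on $\widetilde P_\sigma$ when $L$ is only semiample, since the adjacent-cone argument above uses ampleness decisively; I expect this to require, for each $j\in\ii{\sigma}$, a more delicate choice of a supporting maximal cone $\sigma''\in\Sigma_{\max}$ (using the vertex description of $L+P_L$ from Lemma \ref{lem:effective_cone}) that isolates the $j$-th coordinate sufficiently well.
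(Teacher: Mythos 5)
Your proposal tracks the paper's proof closely. The key identity you propose is the one the paper obtains from $L(\sigma)-L(\sigma')=\sum_j\alpha_{j,\sigma'}(D_j(\sigma)-D_j)$ and \eqref{eq:D_rho_sigma}; deriving it via the vanishing of $L(\sigma)-L(\sigma')$ on $\widetilde H$ is the same computation written differently. For part \ref{lem:item:widetildePsigma_defin} the paper likewise passes to the adjacent cone across the wall $\cone(\gen{i}:i\in\ii{\sigma}\smallsetminus\{j\})$, quoting \cite[Lemma 8.9]{MR1679841}, and uses the positivity $\alpha_{j,\sigma'}>0$ from Remark \ref{rem:basepointfree_L(sigma)}, which is exactly your argument. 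The detour through the cone-intersection identity is harmless but not needed: the description $\widetilde P_\sigma=\{\mathbf t\in\widetilde P : \mathbf t(L(\sigma''))\leq\mathbf t(L(\sigma))\ \forall\sigma''\}$, which the paper records, already yields $\widetilde P_\sigma\cap\widetilde P_{\sigma'}=\widetilde P_\sigma\cap\{\mathbf t(L(\sigma))=\mathbf t(L(\sigma'))\}$ directly.

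The obstacle you flag at the end is genuine and it cannot be repaired by a cleverer choice of supporting cone, because part \ref{lem:item:widetildePsigma_intersection_two} as stated is actually false for semiample but non-ample $L$. Take $X$ the blow-up of $\PP^2$ at a torus-fixed point, with ray generators $\gen{1}=(1,0)$, $\gen{2}=(0,1)$, $\gen{3}=(-1,1)$, $\gen{4}=(0,-1)$, maximal cones $\sigma_{12},\sigma_{23},\sigma_{34},\sigma_{41}$, and $L=D_4$; this $L$ is nef and big, hence semiample and satisfying Assumption \ref{assp:L}, but not ample since $D_4\cdot D_2=0$. One computes $L(\sigma_{12})=L(\sigma_{23})=D_4$, $L(\sigma_{34})=D_1+D_2$, $L(\sigma_{41})=D_2+D_3$, so $\widetilde P$ is cut out by $t_i\geq0$, $t_4\leq1$, $t_1+t_2\leq1$, $t_2+t_3\leq1$. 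For $\sigma=\sigma_{12}$, $\sigma'=\sigma_{34}$ one has $\ii{\sigma_{12}}=\{1,2\}$, $\alpha_{1,\sigma_{34}}=\alpha_{2,\sigma_{34}}=1$, $D_1(\sigma_{12})=D_3$, $D_2(\sigma_{12})=D_4-D_3$, so the right-hand side of \ref{lem:item:widetildePsigma_intersection_two} imposes both $t_1=t_3$ and $t_4=t_2+t_3$; yet $\mathbf t=(\tfrac12,0,0,\tfrac12)$ lies in $\widetilde P_{\sigma_{12}}\cap\widetilde P_{\sigma_{34}}$, since the values $\mathbf t(L(\sigma''))$ are $\tfrac12,\tfrac12,\tfrac12,0$ with the maximum attained at both $\sigma_{12}$ and $\sigma_{34}$, while $t_1=\tfrac12\neq0=t_3$. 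So ampleness is an essential hypothesis in \ref{lem:item:widetildePsigma_intersection_two} just as in \ref{lem:item:widetildePsigma_defin}, and the paper's own ``hence \ref{lem:item:widetildePsigma_intersection_two} $\dots$ follow'' overstates what the key identity alone gives in the merely semiample case. Part \ref{lem:item:widetildePsigma_intersection_all}, which is what the paper actually uses for non-ample $L$ (in Proposition \ref{prop:description_b}), nevertheless survives: it can be proved directly from $\widetilde P_\sigma\cap\widetilde P_{\sigma'}=\widetilde P_\sigma\cap\{\mathbf t(L(\sigma))=\mathbf t(L(\sigma'))\}$ together with \eqref{eq:widetildeP_union} and the fact that the differences $L(\sigma)-L(\sigma')$ span $M_\RR$ when $L$ is big, bypassing \ref{lem:item:widetildePsigma_intersection_two} altogether.
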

	\begin{proof}
	By definition, $\widetilde P_{\sigma}$ is the set of elements $\mathbf t\in \widetilde P$ 
	such that $\mathbf t(L(\sigma'))\leq \mathbf t(L(\sigma))$
	for all $\sigma'\in\Sigma_{\max}$. By \eqref{eq:D_rho_sigma} we have
	\begin{equation*}
	\mathbf t ( L(\sigma) - L(\sigma') ) 
	 = \mathbf t \left(\sum_{j=1}^s \alpha_{j,\sigma'} (D_j(\sigma) - D_j) \right) 
	 = \sum_{j\in \ii{\sigma}} \alpha_{j,\sigma'}
	\left ( \left( 
	\sum_{i\in\ic{\sigma}}\beta_{\sigma,i,j} t_i 
	\right) - {t_j} \right)
	\end{equation*}
	Hence, \ref{lem:item:widetildePsigma_intersection_two} and 
	the inclusion $\supseteq$ in \ref{lem:item:widetildePsigma_defin} follow. 
	For the reverse inclusion in \ref{lem:item:widetildePsigma_defin} we fix $j\in\ii{\sigma}$. 
	By \cite[Lemma 8.9]{MR1679841}
	there is 
	$\sigma'\in\Sigma_{\max}$ such that 
	$\ii{\sigma}\cap\ii{\sigma'}=\ii{\sigma}\smallsetminus\{j\}$. 
	Then 
	\[
	 \alpha_{j,\sigma'}
	 \left(
	 {t_j}
	-\sum_{i\in\ic{\sigma}}\beta_{\sigma,i,j} t_i
	\right) = \mathbf t ( L(\sigma') - L(\sigma) ) \leq 0
	\]
	for all $\mathbf t\in\widetilde P_\sigma$ and $\alpha_{j,\sigma'}>0$ 
	by Remark \ref{rem:basepointfree_L(sigma)}. 
	Part \ref{lem:item:widetildePsigma_intersection_all} follows 
	from \ref{lem:item:widetildePsigma_intersection_two}, 
	as $\bigcap_{\sigma\in\Sigma_{\max}} \widetilde P_{\sigma} =\widetilde P_{\sigma'} \cap \widetilde H$ 
	for every $\sigma'\in\Sigma_{\max}$ by \ref{lem:item:widetildePsigma_intersection_two} 
	together with Assumption \ref{assp:L}, and we conclude by \eqref{eq:widetildeP_union}.
	\end{proof}
	
	\begin{lemma}
	\label{lem:face_maximization_problem}
	Assume that $L$ is ample. Let $\varpi=(\varpi_1,\dots,\varpi_s)\in\RR^s_{>0}$. 
	Let $\widetilde F$ be the face of $\widetilde P$ where the maximum value 
	$a(L,\varpi)$ of $\sum_{i=1}^s\varpi_i t_i$ is attained. 
	Then 
	\begin{enumerate}[label=(\roman*), ref=(\roman*)]
	\item 
	\label{item:lem:face_maximization_problem_general}
	$a(L,\varpi)>0$ and $\widetilde F\subseteq \widetilde H$.
	\item 
	\label{item:lem:face_maximization_problem_special}
	If, additionally, $[L]=[\sum_{i=1}^s\varpi_i D_i]$ in $\pic(X)_{\RR}$, then $a(L,\varpi)=1$ 
	and $\widetilde F=\widetilde H \cap \{\sum_{i=1}^s\varpi_i t_i=1\}$. 
	In particular, $\widetilde F\cap \{t_1,\dots,t_s>0\}\neq\emptyset$.
	\end{enumerate}		
	\end{lemma}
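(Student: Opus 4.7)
My plan is to exploit the decomposition $\widetilde P=\bigcup_{\sigma\in\Sigma_{\max}}\widetilde P_\sigma$ from \eqref{eq:widetildeP_union} together with the explicit description of each cell $\widetilde P_\sigma$ in Lemma \ref{lem:widetildePsigma}\ref{lem:item:widetildePsigma_defin}, in which each coordinate $t_j$ with $j\in\ii{\sigma}$ is constrained only by $0\leq t_j\leq \sum_{i\in\ic{\sigma}}\beta_{\sigma,i,j}\,t_i$. For part \ref{item:lem:face_maximization_problem_general}, positivity of $a(L,\varpi)$ is immediate by evaluating the objective at $\mathbf t=(\varepsilon,\ldots,\varepsilon)\in\widetilde P$ for sufficiently small $\varepsilon>0$. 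To establish $\widetilde F\subseteq\widetilde H$, I would pick any $\mathbf t\in\widetilde F$, choose $\sigma\in\Sigma_{\max}$ with $\mathbf t\in\widetilde P_\sigma$, and observe that because $\varpi_j>0$ for every $j\in\ii{\sigma}$, a strict inequality $t_j<\sum_{i\in\ic{\sigma}}\beta_{\sigma,i,j}t_i$ for some $j$ would allow an admissible upward perturbation of $t_j$ inside $\widetilde P_\sigma$ strictly increasing $\sum_i\varpi_it_i$, contradicting maximality. Thus $t_j=\sum_{i\in\ic{\sigma}}\beta_{\sigma,i,j}t_i$ for every $j\in\ii{\sigma}$, which is precisely the defining system \eqref{eq:dual_picard} of $\widetilde H$ for the cone $\sigma$, so $\mathbf t\in\widetilde H$.

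For part \ref{item:lem:face_maximization_problem_special}, the key observation is that any $\mathbf t\in\widetilde H$ is by construction a linear functional on $\bigoplus_i\RR D_i$ that factors through $\pic(X)_\RR$; hence $\mathbf t(D)$ depends only on the class $[D]\in\pic(X)_\RR$. Applying this to the linear equivalences $L\sim L(\sigma)\sim\sum_i\varpi_i D_i$ gives
\[
\sum_{i=1}^s\alpha_{i,\sigma}\,t_i=\mathbf t(L(\sigma))=\mathbf t(L)=\sum_{i=1}^s\varpi_i\,t_i
\qquad\text{for every }\mathbf t\in\widetilde H,\ \sigma\in\Sigma_{\max}.
\]
Combined with the inequalities $\sum_i\alpha_{i,\sigma}t_i\leq 1$ defining $\widetilde P$, this immediately yields $\sum_i\varpi_it_i\leq 1$ on $\widetilde P\cap\widetilde H$, so $a(L,\varpi)\leq 1$. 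I then plan to produce a specific $\mathbf t_0\in\widetilde H\cap\RR^s_{>0}$, rescale it by $(\sum_i\varpi_it_{0,i})^{-1}$, and note that the rescaled point simultaneously satisfies $t_i>0$, $\sum_i\varpi_it_i=1$, and (by the displayed identity) $\sum_i\alpha_{i,\sigma}t_i=1$ for every $\sigma$, hence lies in $\widetilde P\cap\widetilde H$. Together with part \ref{item:lem:face_maximization_problem_general}, this witnesses $a(L,\varpi)=1$, the nonemptiness of $\widetilde F\cap\{t_1,\ldots,t_s>0\}$, and the claimed equality $\widetilde F=\widetilde H\cap\{\sum_i\varpi_it_i=1\}$.

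The main obstacle I anticipate is producing $\mathbf t_0\in\widetilde H\cap\RR^s_{>0}$. I intend to obtain it via Stiemke's theorem of the alternative applied to the embedding $M_\RR\hookrightarrow\RR^s$, $m\mapsto(m(\gen{1}),\ldots,m(\gen{s}))$: existence of $\mathbf t_0$ is equivalent to the nonexistence of a nonzero $m\in M_\RR$ with $m(\gen{i})\geq 0$ for every $i$. Completeness of $X$ is precisely what rules the latter out, as $|\Sigma|=N_\RR$ implies that each $-\gen{j}$ lies in some cone of $\Sigma$ and is therefore a nonnegative combination of the $\gen{i}$'s, forcing $m(\gen{j})\leq 0$ as well and hence $m=0$. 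Everything else in the proof is bookkeeping once the cell decomposition of $\widetilde P$ and the identification $\sum_i\alpha_{i,\sigma}t_i=\sum_i\varpi_it_i$ on $\widetilde H$ are in hand.
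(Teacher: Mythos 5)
Your proof is correct and follows essentially the same plan as the paper's: positivity of $a(L,\varpi)$ by full-dimensionality, $\widetilde F\subseteq\widetilde H$ by perturbing a free coordinate $t_j$ ($j\in\ii{\sigma}$) upward inside $\widetilde P_\sigma$, then identifying $\sum_i\alpha_{i,\sigma}t_i$ with $\sum_i\varpi_i t_i$ on $\widetilde H$ and producing a strictly positive point of $\widetilde H$ to pin down the maximum. The one place where you genuinely improve on the paper's exposition is the existence of $\mathbf t_0\in\widetilde H\cap\RR^s_{>0}$. The paper asserts this from ``$D_1,\dots,D_s$ are not principal divisors,'' but that hypothesis alone only gives, for each $i$, some $\mathbf t\in\widetilde H$ with $t_i\neq 0$; it does not by itself force a single $\mathbf t$ with all coordinates strictly positive. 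Your route via Stiemke's theorem of the alternative is the honest one: $\widetilde H\cap\RR^s_{>0}=\emptyset$ would produce a nonzero principal divisor $\sum a_iD_i$ with all $a_i\geq0$, i.e.\ a nonzero $m\in M_\RR$ with $m(\gen{i})\geq0$ for all $i$, which completeness of $\Sigma$ rules out. This is exactly the argument the paper seems to have in mind, made explicit. Your derivation of the identity $\sum_i\alpha_{i,\sigma}t_i=\sum_i\varpi_it_i$ on $\widetilde H$ via ``$\mathbf t$ factors through $\pic(X)_\RR$'' is equivalent to the paper's computation with $\alpha_{i,\sigma}=\varpi_i+\sum_j\varpi_j\beta_{\sigma,i,j}$, just phrased more conceptually.

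One small caveat, which applies equally to the paper's own proof: the set-theoretic equality $\widetilde F=\widetilde H\cap\{\sum_i\varpi_i t_i=1\}$ as literally written needs a word about why any $\mathbf t$ in the right-hand side satisfies $t_i\geq 0$, i.e.\ lies in $\widetilde P$ at all; the identity on $\widetilde H$ gives the constraints $\sum_i\alpha_{i,\sigma}t_i\leq 1$ but not the sign conditions. (In fact the right-hand side is an $(r-1)$-dimensional affine subspace and so is unbounded when $r\geq 2$, whereas $\widetilde F$ is bounded; the intended statement is presumably the intersection with $\widetilde P$.) Your write-up inherits this imprecision from the statement itself, so it is not a defect of your argument relative to the paper, but it is worth flagging if you want a self-contained proof of exactly what the lemma asserts.
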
	
	\begin{proof}	
	Since $s\geq1$ and $\widetilde P$ is full dimensional, we have $a(L,\varpi)>0$.
	For every $\sigma\in\Sigma_{\max}$, let 
 	$\widetilde F_\sigma:= \widetilde F\cap \widetilde P_\sigma$.  	
	Fix $\sigma\in\Sigma_{\max}$. 
	Let $\mathbf t\in \widetilde P_\sigma \smallsetminus \widetilde H$.
	By \eqref{eq:dual_picard} and Lemma \ref{lem:widetildePsigma}  
	there exists $j\in\ii{\sigma}$ such that 
	$t_{j}<\sum_{i\in\ic{\sigma}}\beta_{\sigma,i,j} t_i$.
	Let $t'_{j}:=\sum_{i\in\ic{\sigma}}\beta_{\sigma,i,j} t_i$. 
	For each $i\in\{1,\dots,s\}\smallsetminus\{j\}$ let $t'_i:=t_i$. 
	Then $(t'_{1},\dots,t'_{s}) \in \widetilde P_\sigma$, and 
	$\sum_{i=1}^s\varpi_i t'_i 
	> \sum_{i=1}^s\varpi_i t_i $ by construction. 
	Hence $\mathbf t\notin \widetilde F_\sigma$. 
	Thus $\widetilde F_\sigma \subseteq \widetilde H$, which implies 
	$\widetilde F_{\sigma} \subseteq \widetilde F_{\sigma'}$ for all $\sigma'\in\Sigma_{\max}$. 
	Since this proof works for every $\sigma \in \Sigma_{\max}$, we conclude that
	$\widetilde F_{\sigma} = \widetilde F_{\sigma'}$ for all $\sigma, \sigma'\in\Sigma_{\max}$. 
	Now \ref{item:lem:face_maximization_problem_general} follows, because
	$\widetilde F=\bigcup_{\sigma\in\Sigma_{\max}}\widetilde F_\sigma$.	
	
	For \ref{item:lem:face_maximization_problem_special} we recall that 
	$\alpha_{i,\sigma}=\varpi_i+\sum_{j\in\ii{\sigma}}\varpi_j\beta_{\sigma,i,j}$ 
	for all $i\in\ic{\sigma}$.  Hence, we have
	$\sum_{i\in\ic{\sigma}}\alpha_{i,\sigma}t_i=\sum_{i=1}^s\varpi_i t_i$ 
	for all $\mathbf t\in \widetilde H$ and for all $\sigma\in\Sigma_{\max}$.
	Since $\widetilde H$ is the subspace of $\RR^s$ dual to $\pic(X)_{\RR}$, 
	a torus invariant divisor $D$ satisfies $\mathbf t(D)=0$ for all $\mathbf t\in\widetilde H$
	if and only if $D$ is a principal divisor. Since $D_1,\dots, D_s$ are not principal divisors, then 
	$\widetilde H \cap \{t_1,\dots,t_s>0\}\neq\emptyset$. 
	Let $\mathbf t \in \widetilde H$ with $t_1,\dots,t_s>0$, 
	up to rescaling $\mathbf t$ by a positive real number we  can assume that
	$\sum_{i=1}^s\varpi_i t_i=1$, and hence $\mathbf t\in\widetilde F$.
	\end{proof}

\subsubsection{The geometric constant}
        We compute certain volumes of polytopes that appear in the leading constant 
	of the asymptotic formula \ref{eq:mainthm_asympt}.
	
	Fix $\sigma\in\Sigma_{\max}$.
	Since $X$ is smooth, we know that $\pic(X)=\bigoplus_{i\in\ic{\sigma}}\ZZ[D_i]$. 
	We identify  $\RR^r$ with the space of linear functions on 
	$\bigoplus_{i\in\ic{\sigma}}\RR[D_{i}]$ by defining 
	$$\mathbf z\left (\sum_{i\in\ic{\sigma}}a_i[D_{i}]\right ):=\sum_{i\in\ic{\sigma}}a_iz_i$$ 
	for all $\mathbf z=(z_i)_{i\in\ic{\sigma}}\in\RR^r$.
	Let $\lambda_{[L]}:\RR^r\to\RR$ be the evaluation at  $[L]$;
	that is, $\lambda_{[L]}(\mathbf z)=\sum_{i\in\ic{\sigma}}\alpha_{i,\sigma} z_i$. 
	Fix $\tilde i\in\ic{\sigma}$ such that $\alpha_{\tilde i,\sigma}\neq0$.
	The change of variables $x=\lambda_{[L]}(\mathbf z)$, 
	$\mathrm d x=\alpha_{\tilde i,\sigma}\mathrm d z_{\tilde i}$, gives
	\[
	\int_{\RR^r} g\prod_{i\in\ic{\sigma}}\mathrm d z_i=
	\int_\RR\left ( 
	\int_{z_{\tilde i}=\left (x-\sum_{i\in\ic{\sigma}, i\neq\tilde i}\alpha_{i,\sigma} z_i \right )/\alpha_{\tilde i,\sigma}}
	g\  \alpha_{\tilde i,\sigma}^{-1}\prod_{i\in\ic{\sigma}, i\neq\tilde i}\mathrm dz_i\right)\mathrm d x
	\]
	for all integrable functions $g:\RR^r\to \RR$.
	\begin{lemma}
	\label{lem:alpha_peyre}
	The volume
	\[
	\alpha(L):= 
	\int_{\eff(X)^*\cap \lambda_{[L]}^{-1}(1)} 
	\alpha_{\tilde i,\sigma}^{-1}\prod_{i\in	\ic{\sigma}, i\neq\tilde i}\mathrm dz_i
	\]
	is positive and independent of the choice of $\tilde i$ and of the choice of $\sigma$.
	\end{lemma}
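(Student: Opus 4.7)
The plan is to interpret the integral defining $\alpha(L)$ as the $(r-1)$-dimensional volume of $\eff(X)^*\cap\lambda_{[L]}^{-1}(1)$ with respect to the canonical translation-invariant measure on the affine hyperplane $\lambda_{[L]}^{-1}(1)\subseteq\RR^r$, and to deduce both positivity and independence from the intrinsic nature of this measure.

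First I would fix $\sigma$ and prove independence of $\tilde i$. The change of variables $x=\lambda_{[L]}(\mathbf z)$, $\mathrm d x=\alpha_{\tilde i,\sigma}\,\mathrm d z_{\tilde i}$ recorded just before the lemma identifies $\alpha_{\tilde i,\sigma}^{-1}\prod_{i\in\ic{\sigma},\,i\neq\tilde i}\mathrm d z_i$ as the unique translation-invariant measure $\mathrm d\mu$ on $\lambda_{[L]}^{-1}(1)$ for which $\mathrm d\mu\wedge\mathrm d x$ recovers the ambient Lebesgue measure $\prod_{i\in\ic{\sigma}}\mathrm d z_i$ on $\RR^r$. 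Since this characterization makes no reference to $\tilde i$, the resulting integral depends only on $\sigma$ and on the ambient Lebesgue measure.

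Next I would handle independence of $\sigma$. Since $X$ is smooth, both $\{[D_i]\}_{i\in\ic{\sigma}}$ and $\{[D_j]\}_{j\in\ic{\sigma'}}$ are $\ZZ$-bases of $\pic(X)$, so the transition matrix between them lies in $\GL_r(\ZZ)$. The induced linear change of coordinates on $\pic(X)^*_{\RR}\cong\RR^r$ then has determinant $\pm 1$ and preserves the ambient Lebesgue measure. Since $\lambda_{[L]}$ and $\eff(X)^*\cap\lambda_{[L]}^{-1}(1)$ are intrinsically defined objects, the characterization from the previous step implies that the induced quotient measure on the hyperplane coincides in both coordinate systems, yielding the same integral.

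For positivity I would argue that the cone $\eff(X)$ has full-dimensional linear span in $\pic(X)_{\RR}$ (it contains the classes $[D_1],\dots,[D_s]$, whose images span $\pic(X)_{\RR}$), so $\eff(X)^*$ is full-dimensional in $\pic(X)^*_{\RR}$. In the ample case relevant to the main application, $[L]$ lies in the interior of $\eff(X)$, so $\lambda_{[L]}$ is strictly positive on $\eff(X)^*\smallsetminus\{0\}$; hence $\eff(X)^*\cap\lambda_{[L]}^{-1}(1)$ is a compact, full-dimensional polytope in the hyperplane and has positive finite $(r-1)$-dimensional volume. The step I would expect to require most care is verifying boundedness in the fully general semiample setting, where $[L]$ could sit on the boundary of $\eff(X)$; there one would invoke Assumption \ref{assp:L} together with Lemma \ref{lem:face_maximization_problem} to restrict to the sub-cone of $\eff(X)^*$ on which $\lambda_{[L]}$ is strictly positive.
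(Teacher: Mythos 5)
Your argument matches the paper's (very terse) proof in both structure and substance: the paper asserts without elaboration that $\eff(X)^*\cap\lambda_{[L]}^{-1}(1)$ is a transversal $(r-1)$-dimensional polytope, that $\tilde i$-independence is clear, and that $\sigma$-independence follows from Lemma \ref{lem:relation_beta_sigma_sigma'}; your quotient-measure and $\GL_r(\ZZ)$ framings supply exactly the missing reasons (the $\GL_r(\ZZ)$ fact is what Lemma \ref{lem:relation_beta_sigma_sigma'} encodes once one uses smoothness of $X$ to know the transition matrices between the bases $\{[D_i]\}_{i\in\ic{\sigma}}$ and $\{[D_j]\}_{j\in\ic{\sigma'}}$ are integral in both directions).

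One small correction to your final caveat. If $\lambda_{[L]}$ vanished on a nonzero face of $\eff(X)^*$, the slice $\eff(X)^*\cap\lambda_{[L]}^{-1}(1)$ would contain a translated ray and be unbounded, and restricting to the sub-cone where $\lambda_{[L]}>0$ would not repair that (you would discard only a measure-zero boundary set, leaving an unbounded region with infinite volume). What actually handles the general semiample case is that Assumption \ref{assp:L} already forces $[L]$ into the interior of $\eff(X)$: by Remark \ref{rem:basepointfree_L(sigma)} each $L(\sigma)$ has nonnegative coefficients, by Assumption \ref{assp:L} each coordinate is strictly positive for at least one $\sigma$, so the barycenter $\tfrac{1}{|\Sigma_{\max}|}\sum_{\sigma\in\Sigma_{\max}}L(\sigma)\in(L+M_\RR)\cap C$ has all coordinates strictly positive, and $\varphi$ sends it to an interior point of $\eff(X)$ in the class $[L]$. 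Hence $\lambda_{[L]}>0$ on $\eff(X)^*\smallsetminus\{0\}$ in every case covered by the lemma, and your ample-case compactness argument applies verbatim.
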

	\begin{proof}
	The transversal intersection $\eff(X)^*\cap \lambda_{[L]}^{-1}(1)$  
	is an $(r-1)$-dimensional polytope, hence the volume is positive.
	The independence of the choice of $\tilde i$ is clear. 
	The independence of the choice of $\sigma$ is a consequence of 
	Lemma \ref{lem:relation_beta_sigma_sigma'}. 
	\end{proof}
	 	
	\begin{lemma}
	\label{lem:alpha_constant}
	Assume that $L$ is ample and $[L]=[\sum_{i=1}^s\varpi_i D_i]$ with $\varpi_1,\dots,\varpi_s>0$. 
	For $\delta\geq0$, let $H_\delta$ be the hyperplane defined by $\sum_{i=1}^s\varpi_i t_i=1-\delta$.
	Then for $\delta>0$ small enough, 
	$\meas_{s-1}\left(H_\delta\cap\widetilde P\right)= c \delta^{s-r} +O(\delta^{s-r+1})$, where
	$\meas_{s-1}$ is the $(s-1)$-dimensional measure on $H_\delta$ given by 
	$\prod_{1\leq i\leq s, i\neq\tilde i}(\varpi_i\mathrm dt_i)$ 
	for any choice of $\tilde i\in\{1,\dots,s\}$, and 
	$$
	c=\frac{\alpha(L)}{(s-r)!}\sum_{\sigma\in\Sigma_{\max}}\prod_{i\in\ic{\sigma}}\varpi_i.
	$$
	\end{lemma}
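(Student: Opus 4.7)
The plan is to first identify the face $\widetilde F$ of Proposition \ref{prop:volume_section_polytope} explicitly. By Lemma \ref{lem:face_maximization_problem} part \ref{item:lem:face_maximization_problem_special} we have $\widetilde F = \widetilde H\cap\{\sum_i\varpi_i t_i = 1\}$, where $\widetilde H$ has dimension $r$ (it is dual to $\pic(X)_\RR$) and $\sum_i\varpi_i t_i$ is not identically zero on $\widetilde H$ (it takes value $1$ on the point in $\widetilde F\cap\{t_1,\dots,t_s>0\}$ produced by the same lemma). Hence $\dim \widetilde F = r-1$, and Proposition \ref{prop:volume_section_polytope} part \ref{item:volume_s-1} applied to $\widetilde P$ with this face yields immediately $\meas_{s-1}(H_\delta\cap\widetilde P) = c\delta^{s-r}+O(\delta^{s-r+1})$ for some positive constant $c$. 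The remaining task is to identify $c$ with the claimed expression.

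For the computation of $c$, I would use the covering $\widetilde P=\bigcup_{\sigma\in\Sigma_{\max}}\widetilde P_\sigma$ from \eqref{eq:widetildeP_union}. By Lemma \ref{lem:widetildePsigma} part \ref{lem:item:widetildePsigma_intersection_two}, for distinct $\sigma,\sigma'$ the intersection $\widetilde P_\sigma\cap\widetilde P_{\sigma'}$ is contained in a proper affine subspace of $\RR^s$, so $H_\delta\cap\widetilde P_\sigma\cap\widetilde P_{\sigma'}$ has dimension at most $s-2$ and vanishes in $\meas_{s-1}$; therefore
$\meas_{s-1}(H_\delta\cap\widetilde P) = \sum_\sigma\meas_{s-1}(H_\delta\cap\widetilde P_\sigma)$.
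Fix $\sigma$ and choose $\tilde i\in\ic{\sigma}$ with $\alpha_{\tilde i,\sigma}>0$ (possible by Remark \ref{rem:basepointfree_L(sigma)}). Using the explicit inequalities of Lemma \ref{lem:widetildePsigma} part \ref{lem:item:widetildePsigma_defin}, I would change to coordinates $z_i = t_i$ for $i\in\ic{\sigma}$ and $v_j := \sum_{i\in\ic{\sigma}}\beta_{\sigma,i,j}z_i - t_j$ for $j\in\ii{\sigma}$. Via the identity $\alpha_{i,\sigma} = \varpi_i + \sum_{j\in\ii{\sigma}}\varpi_j\beta_{\sigma,i,j}$ for $i\in\ic{\sigma}$ (a consequence of $[L]=[\sum_i\varpi_i D_i]$ and \eqref{eq:D_rho_sigma}), the identity $\sum_i\varpi_i t_i = \sum_{i\in\ic{\sigma}}\alpha_{i,\sigma}z_i - \sum_{j\in\ii{\sigma}}\varpi_j v_j$ holds, so $H_\delta$ reads $\sum_i\alpha_{i,\sigma}z_i - \sum_j\varpi_j v_j = 1-\delta$ and the $\widetilde P_\sigma$ constraints become $z_i\geq0$, $0\leq v_j\leq\sum_i\beta_{\sigma,i,j}z_i$, $\sum_i\alpha_{i,\sigma}z_i\leq 1$.

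Parametrize $H_\delta$ by $(z_i)_{i\in\ic{\sigma}\setminus\{\tilde i\}}$ and $(v_j)_{j\in\ii{\sigma}}$, eliminating $z_{\tilde i}$. A matrix-determinant-lemma computation gives that the Jacobian of the coordinate change $(t_i)_{i\neq \tilde i}\mapsto ((z_i)_{i\neq\tilde i},(v_j)_j)$ on $H_\delta$ equals $\alpha_{\tilde i,\sigma}/\varpi_{\tilde i}$, whence
\[
\prod_{i\neq \tilde i}\varpi_i\,dt_i \;=\; \frac{\varpi_{\tilde i}}{\alpha_{\tilde i,\sigma}} \prod_{i\in\ic{\sigma}\setminus\{\tilde i\}}\varpi_i\,dz_i \prod_{j\in\ii{\sigma}}\varpi_j\,dv_j.
\]
Then substitute $v_j=\delta w_j$: the measure picks up $\delta^{s-r}$, the bound $\sum_i\alpha_{i,\sigma}z_i\leq 1$ combined with the hyperplane equation gives $\sum_j\varpi_j w_j\leq 1$, and in the limit $\delta\to0$ the constraint $\sum_i\alpha_{i,\sigma}z_i = 1$ forces $z$ onto $\widetilde F_\sigma=\widetilde F$ while the constraints $\delta w_j\leq\sum_i\beta_{\sigma,i,j}z_i$ become automatic on the interior of $\widetilde F_\sigma$. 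The region therefore factors as $\widetilde F_\sigma\times\{w_j\geq0,\,\sum_j\varpi_j w_j\leq1\}$, with boundary contributions of order $O(\delta^{s-r+1})$ controlled by Proposition \ref{prop:volume_section_polytope} part \ref{item:volume_s-1_kappa}.

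The simplex integral $\int\prod_j\varpi_j\,dw_j$ equals $1/(s-r)!$ via the linear change $u_j=\varpi_j w_j$, while Lemma \ref{lem:alpha_peyre} identifies $\int_{\widetilde F_\sigma}\prod_{i\in\ic{\sigma}\setminus\{\tilde i\}}dz_i = \alpha_{\tilde i,\sigma}\alpha(L)$ (since $\widetilde F_\sigma$ projects isomorphically onto $\eff(X)^*\cap\lambda_{[L]}^{-1}(1)$ in the $(z_i)_{i\in\ic{\sigma}}$ coordinates). Combining all factors,
\[
c_\sigma \;=\; \frac{\varpi_{\tilde i}}{\alpha_{\tilde i,\sigma}}\cdot \alpha_{\tilde i,\sigma}\alpha(L)\prod_{i\in\ic{\sigma}\setminus\{\tilde i\}}\varpi_i\cdot\frac{1}{(s-r)!} \;=\; \frac{\alpha(L)}{(s-r)!}\prod_{i\in\ic{\sigma}}\varpi_i,
\]
which is visibly independent of the choice of $\tilde i$; summing over $\sigma\in\Sigma_{\max}$ produces the asserted formula for $c$. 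The main technical subtlety is the Jacobian factor $\varpi_{\tilde i}/\alpha_{\tilde i,\sigma}$ arising on $H_\delta$: without it one obtains $\varpi_{\tilde i}\prod_{i\in\ic{\sigma}\setminus\{\tilde i\}}\varpi_i$ replaced by $\alpha_{\tilde i,\sigma}\prod_{i\neq\tilde i}\varpi_i$, which depends on $\tilde i$ and fails to produce the symmetric product $\prod_{i\in\ic{\sigma}}\varpi_i$.
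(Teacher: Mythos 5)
Your proposal is correct and takes essentially the same route as the paper's proof: the same decomposition $\widetilde P = \bigcup_\sigma\widetilde P_\sigma$ with intersections of lower-dimensional measure, the same coordinate change separating $(z_i)_{i\in\ic\sigma}$ from $(v_j)_{j\in\ii\sigma}$, the same Jacobian factor $\alpha_{\tilde i,\sigma}/\varpi_{\tilde i}$ via a rank-one determinant update, the same $v_j=\delta w_j$ rescaling producing $\delta^{s-r}$, and the same identification of the $z$-integral with $\alpha_{\tilde i,\sigma}\alpha(L)$ and the $w$-simplex integral with $1/(s-r)!$. The only cosmetic differences are that the paper translates $H_\delta\cap\widetilde P_\sigma$ by $\delta\xi$ into $H_0$ before changing coordinates (which you avoid by absorbing the shift into the hyperplane equation) and that you invoke Proposition \ref{prop:volume_section_polytope}\ref{item:volume_s-1} explicitly for the error order $O(\delta^{s-r+1})$, a dependency the paper's ``dominated convergence'' step uses implicitly.
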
	
	\begin{proof}
	Since $L$ is ample, the decomposition \eqref{eq:widetildeP_union} 
	and  Lemma \ref{lem:widetildePsigma} give
	$$
	\meas_{s-1}\left(H_\delta\cap\widetilde P\right)
	= \sum_{\sigma\in\Sigma_{\max}}\meas_{s-1}\left(H_\delta\cap\widetilde P_\sigma\right).
	$$

	Fix $\sigma \in\Sigma_{\max}$. 
	Let $V_{\delta, \sigma}:=\meas_{s-1}(H_\delta\cap\widetilde P_\sigma)$. 
	By the choice of $L$ we have 
	$\alpha_{i,\sigma}=\varpi_i+\sum_{j\in\ii{\sigma}}\varpi_j\beta_{\sigma,i,j}$ for all $i\in\ic{\sigma}$, 
	and hence,
 	$$
 	\sum_{i=1}^s\varpi_i t_i
 	=\sum_{i\in\ic{\sigma}}\alpha_{i,\sigma} t_i
 	-\sum_{j\in\ii{\sigma}}\varpi_j\left(\sum_{i\in\ic{\sigma}}\beta_{\sigma,i,j}t_i-t_j\right)
 	$$ 
 	for every $\mathbf t \in\RR^s$. 
 	Then   $H_0\cap \widetilde P_\sigma\subseteq \widetilde H$ 
 	by Lemma \ref{lem:widetildePsigma}\ref{lem:item:widetildePsigma_defin}. 
 	Fix $\xi=(\xi_1,\dots,\xi_s)\in H_0\cap \widetilde P_\sigma$, and fix $\tilde i\in\ic{\sigma}$. 
 	Then $V_{\delta,\sigma}=\meas_{s-1}\left(\left(H_\delta\cap \widetilde P_\sigma\right)+\delta\xi\right)$ 
 	is the volume of the polytope given by
	\[
 	t_i\geq \delta\xi_i \ \forall i\in\{1,\dots,s\}, \ 
 	t_{j}
 	\leq \sum_{i\in\ic{\sigma}} \beta_{\sigma, i,j}t_i 
 	\ \forall j\in\ii{\sigma}, \ 
 	\sum_{i\in\ic{\sigma}} \alpha_{i,\sigma}t_i \leq 1+\delta, \ \sum_{i=1}^s\varpi_i t_i=1,
 	\]
 	with respect to the measure $\prod_{1\leq i\leq s, i\neq\tilde i}(\varpi_i\mathrm dt_i)$.  
 	
 	For all $i\in\ic{\sigma}$, let $u_i=t_i$. 
 	For all $j\in\ii{\sigma}$, let $u_j=\left(\sum_{i\in\ic{\sigma}} \beta_{\sigma, i,j}t_i -t_j \right)/\delta$, i.e.,
 	$$
 	u_j
 	= \delta^{-1}\left(\sum_{i\in\ic{\sigma},i\neq\tilde i} \beta_{\sigma, i,j}t_i 
 	+\frac{\beta_{\sigma,\tilde i , j}}{\varpi_{\tilde i}}
 	\left(1- \sum_{1\leq i\leq s, i\neq \tilde i} \varpi_i t_i \right)  -t_j \right).
 	$$
	Let $g(\mathbf u):=\sum_{j\in\ii{\sigma}}\varpi_j u_j$ 
	and $h(\mathbf u):= 1-\sum_{i\in\ic{\sigma},i\neq \tilde i} \alpha_{i,\sigma}u_i$.
	Then $\delta^{r-s}V_{\delta, \sigma}$ is the volume of the polytope given by
 	\begin{gather*}
 	u_j\geq0 \ \forall j\in\ii{\sigma}, 
 	\quad 
 	g(\mathbf u) \leq 1,
	\\ 	
 	u_i\geq \delta\xi_i \ \forall i\in\ic{\sigma}\smallsetminus\{\tilde i\}, 
 	\quad 
 	\sum_{ i\in\ic{\sigma},i\neq\tilde i} \alpha_{i,\sigma} u_i \leq 1+\delta g(\mathbf u),
 	\\
 	\sum_{i\in\ic{\sigma}, i\neq\tilde i}\beta_{\sigma,i,j}u_i 
 	+\frac{\beta_{\sigma,\tilde i,j}}{\alpha_{\tilde i,\sigma}}
 	h(\mathbf u)
 	\geq \delta \left(u_j+\xi_j-\frac{\beta_{\sigma,\tilde i,j}}{\alpha_{\tilde i,\sigma}} g(\mathbf u) \right) 
 	\ \forall j\in\ii{\sigma},
 	\end{gather*}
 	with respect to the measure 
 	$\varpi_{\tilde i}\alpha_{\tilde i,\sigma}^{-1}
 	\prod_{1\leq i\leq s, i\neq\tilde i}(\varpi_i\mathrm du_i)$, as
 	\begin{align*}
 	\left|\det\left((\partial u_j/\partial t_l)_{1\leq j,l\leq s, j\neq\tilde i,l\neq\tilde i}\right)\right|
 	&=\delta^{r-s}\det\left(\left(\beta_{\sigma, \tilde i, l}\varpi_l/\varpi_{\tilde i} + \delta_{l,j}\right)_{j,l\in\ii{\sigma}}\right)\\
 	&=\delta^{r-s} \left(1+ \sum_{j\in\ii{\sigma}}\beta_{\sigma, \tilde i, j}\varpi_j/\varpi_{\tilde i} \right ) 
 	= \delta^{r-s}\alpha_{\tilde i,\sigma}/\varpi_{\tilde i},
 	\end{align*}
 	where $\delta_{l,j}=0$ if $l\neq j$ and $\delta_{j,j}=1$.
 	By dominated convergence 
 	we can compute $c=\lim_{\delta\to 0}\delta^{r-s}V_{\delta, \sigma}$ 
 	as the volume of the polytope given by
 	\begin{gather*}
 	u_j\geq0 \ \forall j\in\ii{\sigma}, 
 	\quad 
 	g(\mathbf u)\leq 1, \\
 	u_i\geq0 \ \forall i\in\ic{\sigma}, 
 	\quad \sum_{i\in\ic{\sigma}}\beta_{\sigma,i,j}u_i\geq 0 \ \forall j\in\ii{\sigma}, 
 	\quad \sum_{i\in\ic{\sigma}}\alpha_{i,\sigma}u_i=1,
 	\end{gather*}
 	with respect to the measure 	
 	$\varpi_{\tilde i}\alpha_{\tilde i,\sigma}^{-1}\prod_{1\leq i\leq s, i\neq\tilde i}(\varpi_i\mathrm du_i)$.
 	We conclude as
 	\[
 	\int_{u_j\geq0 \ \forall j\in\ii{\sigma}, g(\mathbf u)\leq 1} 
 	\prod_{j\in\ii{\sigma}}(\varpi_j\mathrm d u_j)
 	=\frac{1}{(s-r)!}.
 	\qedhere
 	\]
\end{proof}	 	
	 	
\subsection{Heights}
\label{sec:heights}
	Now we study the height associated to a semiample $\QQ$-divisor $L$ on $X$. 
	Let $t$ be a positive integer such that $tL$ has integer coefficients and is base point free.
	By \cite[Proposition 4.3.3]{MR2810322} we have 
	$H^0(X, tL)=\bigoplus_{m\in P_{tL}\cap M}\KK\chi^m$, where $P_{tL}$ and $M$ 
	are defined in Section \ref{sec:polytopes_toric} and $\chi^m\in \KK[T]$ is 
	the character of $T$ corresponding to $m$.

	Let $H_{tL}:X(\KK)\to\RR_{\geq0}$ be the pullback of the exponential Weil height under 
	the morphism $X\to\PP(H^0(X,tL))$ defined by the basis of $H^0(X,tL)$ corresponding 
	to $P_{tL}\cap M$. 
	We define $H_{L}:=(H_{tL})^{1/t}$. 
	We observe that this definition agrees with \cite[\S 2.1]{MR1369408}. 
	\begin{prop}
	\label{prop:height}
	For every $\mathbf y \in \mathscr Y({\KK})$, we have 
	\[
	H_L(\pi(\mathbf y))
	=\prod_{\nu\in\Omega_{\KK}} \sup_{\sigma\in\Sigma_{\max}} \left|\mathbf y^{L(\sigma)}\right|_{\nu}.
	\]
	For every $\mathfrak c\in\mathcal{C}^{r}$ and  
	$\mathbf y\in \mathscr Y^{\mathfrak c}(\mathcal{O}_{\KK})$, we have 
	\[
	H_L(\pi^{\mathfrak c}(\mathbf y))
	=\idealnorm \left(\mathfrak c^{-L}\right)
	\prod_{\nu\in\Omega_\infty} \sup_{\sigma\in\Sigma_{\max}} \left| \mathbf y^{L(\sigma)}\right|_{\nu}.
	\]
	\end{prop}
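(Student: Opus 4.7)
The plan is to compute the Weil height of $x=\pi(\mathbf y)$ via the embedding $X\to\PP(H^0(X,tL))$ by identifying the basis sections of $tL$ with Cox ring monomials in $\mathbf y$, and then use convexity of $P_{tL}$ to reduce the maximum over all basis sections to the maximum over its vertices, which correspond to the monomials $\mathbf y^{tL(\sigma)}$ for $\sigma\in\Sigma_{\max}$.

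For the first formula, the basis $\{\chi^m\}_{m\in P_{tL}\cap M}$ of $H^0(X,tL)$ from \cite[Proposition 4.3.3]{MR2810322} lifts via the Cox ring isomorphism of \cite{MR1299003} to the monomial $\mathbf y^{tL+\sum_i m(\gen{i})D_i}$ in $\KK[y_1,\dots,y_s]$; for the vertex $m=-u_{\sigma,tL}$ (which lies in $M$ because $tL$ has integer coefficients and is base point free) this becomes $\mathbf y^{tL(\sigma)}$. The Weil height from the projective embedding, evaluated via the universal torsor lift and made well-defined by the product formula, gives
\[
H_{tL}(\pi(\mathbf y))=\prod_{\nu\in\Omega_\KK}\max_{m\in P_{tL}\cap M}\bigl|\mathbf y^{tL+\sum_i m(\gen{i})D_i}\bigr|_\nu.
\]
Since $\log|\mathbf y^{tL+\sum_i m(\gen{i})D_i}|_\nu$ is affine in $m$, its maximum over $P_{tL}$ is attained at a vertex. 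Hence
\[
H_{tL}(\pi(\mathbf y))=\prod_\nu\max_{\sigma\in\Sigma_{\max}}|\mathbf y^{tL(\sigma)}|_\nu=\Bigl(\prod_\nu\max_\sigma|\mathbf y^{L(\sigma)}|_\nu\Bigr)^t,
\]
and taking $t$-th roots yields the first formula.

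For the second formula I would apply the first, after unwinding the relationship between $\pi$ and its twist $\pi^{\mathfrak c}$ so that the coordinates on $\mathscr Y^{\mathfrak c}(\OO_\KK)$ can be plugged into the formula above, and then split $\Omega_\KK=\Omega_f\sqcup\Omega_\infty$. For a finite place $\nu$, set $e_{i,\nu}:=v_\nu(y_i)-v_\nu(\mathfrak c^{D_i})\geq 0$ and use $[L(\sigma)]=[L]$ in $\pic(X)$ to write
\[
v_\nu(\mathbf y^{L(\sigma)})=v_\nu(\mathfrak c^L)+\sum_{i\in\ic{\sigma}}\alpha_{i,\sigma}\,e_{i,\nu}.
\]
The coprimality condition \eqref{eq:coprimality_condition} yields, for each finite $\nu$, some $\sigma\in\Sigma_{\max}$ with $e_{i,\nu}=0$ for all $i\in\ic{\sigma}$, so the sum on the right vanishes for that $\sigma$; combined with non-negativity of all $\alpha_{i,\sigma}$ and $e_{i,\nu}$ this forces $\min_\sigma v_\nu(\mathbf y^{L(\sigma)})=v_\nu(\mathfrak c^L)$, whence $\prod_{\nu\in\Omega_f}\max_\sigma|\mathbf y^{L(\sigma)}|_\nu=\idealnorm(\mathfrak c^{-L})$. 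The main obstacle is essentially bookkeeping: pinning down the Cox-ring identification of sections with monomials, verifying that the Weil height is genuinely computed by the product above via the universal torsor parametrization, and carefully handling the twist $\mathfrak c$ so that part~(1) can be invoked cleanly; the core arithmetic content is then the convexity argument on $P_{tL}$ and the extraction of the finite-place contribution from the coprimality condition.
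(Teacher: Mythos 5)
Your proposal is correct and follows essentially the same route as the paper: reduce the sup over $P_{tL}\cap M$ to the vertices $-u_{\sigma,tL}$ by convexity (the paper does this by writing $m=-\sum_\sigma\lambda_\sigma u_{\sigma,tL}$ as a convex combination and factoring $\mathbf y^{tL+(\chi^m)}=\prod_\sigma(\mathbf y^{tL(\sigma)})^{\lambda_\sigma}$, which is the same observation you make via affineness of $\log|\cdot|_\nu$), and then extract the finite-place contribution $\idealnorm(\mathfrak c^{-L})$ from the coprimality condition \eqref{eq:coprimality_condition} together with the non-negativity of the $\alpha_{i,\sigma}$ and of $v_\p(y_i)-v_\p(\mathfrak c^{D_i})$. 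The bookkeeping points you flag (Cox-ring identification of sections, behaviour of the twist $\pi^{\mathfrak c}$ over $\KK$) are handled in the paper at the same level of brevity, by reference to \cite{MR1299003}, \cite[Prop.\ 4.3.3]{MR2810322} and \cite[Proposition 2]{MR3514738}.
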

	\begin{proof}
	By definition of $H_L$ and of $\pi$ we have, for $\mathbf y \in \mathscr Y({\KK})$, 
	\[
	H_L(\pi( \mathbf y ))
	=\prod_{\nu\in\Omega_{\KK}} \sup_{m\in P_{tL}\cap M} \left| \mathbf y^{tL+(\chi^m)} \right|_{\nu}^{1/t}.
	\]
	Let $m\in P_{tL}\cap M$. 
	By \eqref{eq:P_L} there are $\lambda_\sigma\in\RR_{\geq0}$ for 
	$\sigma\in\Sigma_{\max}$ such that 
	$\sum_{\sigma\in\Sigma_{\max}}\lambda_\sigma=1$ 
	and  $m=-\sum_{\sigma\in \Sigma_{\max}}\lambda_\sigma u_{\sigma,tL}$, so that 
	$\mathbf y^{tL+(\chi^m)}= \mathbf y^{\sum_{\sigma\in \Sigma_{\max}}\lambda_\sigma tL(\sigma)}$. 
	This proves the first statement.
	For the second statement we argue as in the proof of \cite[Proposition 2]{MR3514738}.
	Fix $\mathfrak c\in\mathcal{C}^{r}$ and  
	$\mathbf y\in \mathscr Y^{\mathfrak c}(\mathcal{O}_{\KK})$. 
	For every prime ideal $\mathfrak p$ of $\OO_{\KK}$ we write $v_\p$ for the 
	associated valuation. Then 
	\[
	\min_{\sigma\in\Sigma_{\max}} v_\p \left( \mathbf y^{L(\sigma)}\right)
	=\min_{\sigma\in\Sigma_{\max}} v_\p \left( \mathbf y^{L(\sigma)}\mathfrak c^{-L(\sigma)}\right)
	+ v_\p \left(\mathfrak c^L\right)=v_\p\left(\mathfrak c^L\right),
	\]
	where the first equality holds as $[L(\sigma)]=[L]$ in $\pic(X)_{\RR}$, and the second equality 
	follows from \eqref{eq:coprimality_condition} as 
	$\mathbf y^{L(\sigma)} \in \mathfrak c^{L(\sigma)}$ for all $\sigma\in\Sigma_{\max}$.
	\end{proof}

	The following lemma will ensure the Northcott property for $H_L$.

	\begin{lemma}
	\label{lem:northcott}	
	If $L$ satisfies Assumption \ref{assp:L},
	then there is $\alpha>0$ such that for every 
	$\mathfrak c\in\mathcal C^r$ and  $B>0$,  every point 
	$ \mathbf y\in \bigoplus_{i=1}^s \mathfrak c^{D_i}$ with 
	\begin{equation}
	\label{eq:height_condition_affine_space}
	\prod_{\nu\in\Omega_\infty} \sup_{\sigma\in\Sigma_{\max}} \left| \mathbf y^{L(\sigma)}\right|_{\nu}
	\leq \idealnorm \left(\mathfrak c^{L}\right) B
	\end{equation}
	and $y_1,\dots,y_s\neq0$  satisfies 
	\[
	\prod_{\nu\in\Omega_\infty}| \mathbf y_i|_\nu\leq 
	\idealnorm \left(\mathfrak c^{D_{i}}\right)B^\alpha
	\]
 	for all $i\in\{1,\dots,s\}$.
	\end{lemma}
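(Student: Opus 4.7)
The plan is to reduce \eqref{eq:height_condition_affine_space} to a system of linear inequalities via the product formula, and then solve that system using Assumption \ref{assp:L}. For a point $\mathbf y\in\bigoplus_{i=1}^s\mathfrak c^{D_i}$ with $y_1,\dots,y_s\neq0$, I set $Y_i := \prod_{\nu\in\Omega_\infty}|y_i|_\nu$ and $C_i := \idealnorm(\mathfrak c^{D_i})$. First I would observe that the containment $y_i\in\mathfrak c^{D_i}$ gives $v_\mathfrak p(y_i)\geq v_\mathfrak p(\mathfrak c^{D_i})$ for every prime $\mathfrak p$ of $\OO_\KK$, hence $\prod_{\nu\in\Omega_f}|y_i|_\nu\leq C_i^{-1}$; combining with the product formula $\prod_{\nu\in\Omega_\KK}|y_i|_\nu=1$ yields the key lower bound $Y_i\geq C_i>0$.

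Next I would rewrite the height hypothesis. At each archimedean place $|\mathbf y^{L(\sigma)}|_\nu\leq\sup_{\sigma'\in\Sigma_{\max}}|\mathbf y^{L(\sigma')}|_\nu$, and $|\mathbf y^{L(\sigma)}|_\nu=\prod_{i=1}^s|y_i|_\nu^{\alpha_{i,\sigma}}$, so taking the product over $\nu\in\Omega_\infty$ gives
\[
\prod_{i=1}^s Y_i^{\alpha_{i,\sigma}} \leq \idealnorm(\mathfrak c^L)\,B
\qquad\text{for every }\sigma\in\Sigma_{\max}.
\]
Using $[L(\sigma)]=[L]$ in $\pic(X)$, the twist ideal factorizes as $\idealnorm(\mathfrak c^L)=\idealnorm(\mathfrak c^{L(\sigma)})=\prod_{i=1}^s C_i^{\alpha_{i,\sigma}}$, so dividing and taking logarithms yields
\[
\sum_{i=1}^s \alpha_{i,\sigma}\log(Y_i/C_i)\leq\log B
\qquad\text{for every }\sigma\in\Sigma_{\max},
\]
with every summand on the left nonnegative by the first paragraph.

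Finally I would invoke Assumption \ref{assp:L}: for each $i\in\{1,\dots,s\}$ I pick $\sigma(i)\in\Sigma_{\max}$ with $\alpha_{i,\sigma(i)}>0$. Isolating the $i$-th term of the inequality for $\sigma=\sigma(i)$ and using nonnegativity of the other summands gives $\log(Y_i/C_i)\leq\alpha_{i,\sigma(i)}^{-1}\log B$, i.e.\ $Y_i\leq C_i\,B^{1/\alpha_{i,\sigma(i)}}$; taking $\alpha:=\max_{1\leq i\leq s}\min\{\alpha_{i,\sigma}^{-1}:\sigma\in\Sigma_{\max},\ \alpha_{i,\sigma}>0\}$ gives the desired conclusion. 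The only cosmetic wrinkle is that $L$ is a $\QQ$-divisor, so the $\alpha_{i,\sigma}$ may be rational; this is harmless since the defining identity $\idealnorm(\mathfrak c^L)=\prod_i C_i^{\alpha_{i,\sigma}}$ can be read through the chosen basis of $\pic(X)_\RR$ used to define $\mathfrak c^D$, or by replacing $L$ with a positive integer multiple $tL$ clearing denominators and rescaling $B$ accordingly. No genuine obstacle arises beyond this bookkeeping; Assumption \ref{assp:L} does essentially all of the work.
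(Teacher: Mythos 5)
Your proof is correct and takes essentially the same route as the paper's: both arguments extract, for each index $i$, a cone $\sigma$ with $\alpha_{i,\sigma}>0$ via Assumption \ref{assp:L}, use $y_j\in\mathfrak c^{D_j}$ together with the product formula to bound the other factors from below by $\idealnorm(\mathfrak c^{D_j})$, and invoke $[L(\sigma)]=[L]$ to cancel the twist. Your logarithmic rewriting and the explicit optimal choice of $\alpha$ are only cosmetic repackagings of the paper's multiplicative argument.
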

	\begin{proof}
	Let $ \mathbf y\in \bigoplus_{i=1}^s \mathfrak c^{D_i}$ 
	such that \eqref{eq:height_condition_affine_space} holds 
	and $y_1,\dots,y_s\neq0$.
	Fix $i\in\{1,\dots,s\}$ and choose $\sigma\in \Sigma_{\max}$ such that 
	$\alpha_{i,\sigma}>0$. 
	Recall that $\prod_{\nu\in\Omega_\infty}|y_i |_\nu=\idealnorm(y_i \OO_{\KK})$ 
	by the product formula. 
	Since $y_{j}\in\mathfrak c^{D_{j}}$ 
	for all $j\in\{1,\dots,s\}$, we have
	\[
	\idealnorm(y_i\OO_{\KK})^{\alpha_{i,\sigma}}
	\idealnorm \left(\mathfrak c^{L(\sigma)-\alpha_{i,\sigma}D_i}\right)
	\leq \idealnorm \left(\mathbf y^{L(\sigma)}\OO_{\KK}\right)
	\leq \prod_{\nu\in\Omega_\infty} \sup_{\sigma\in\Sigma_{\max}} \left| \mathbf y^{L(\sigma)} \right|_{\nu}
	\leq \idealnorm \left(\mathfrak c^{L}\right)B.
	\]
	Hence, $\idealnorm(\mathbf y_i\OO_{\KK})
	\leq \idealnorm\left (\mathfrak c^{D_\rho}\right)B^{1/\alpha_{i,\sigma}}$.
	\end{proof}

\subsection{Campana points}
\label{sec:campana}
	
	From now on we assume that $\KK=\QQ$. 
	For every $i\in\{1,\dots,s\}$, we fix a positive integer $m_i$ 
	and we denote by $\mathscr D_i$ the closure of $D_i$ in $\mathscr X$.
	Let $\mathbf m:=(m_1,\dots,m_s)$ and 
	\[
	\Delta:=\sum_{i=1}^s\left(1-\frac 1{m_i}\right) \mathscr D_i.
	\]
	The support of the restriction of $\Delta$ to the fibers over $\spec\ZZ$  is a strict 
	normal crossing divisor (see for example \cite[\S 5.1]{MR2740045}), 
	hence $(\mathscr{X},\Delta)$ is a Campana orbifold as in \cite[Definition 3.1]{PSTVA}.
	We denote by $(\mathscr{X},\Delta)(\ZZ)$ the set of Campana 
	$\mathbb Z$-points as in \cite[Definition 3.4]{PSTVA}, and 
	by $\mathscr{Y}(\ZZ)_\mathbf{m}$ the preimage of 
	$(\mathscr{X},\Delta)(\ZZ)$ under $\pi|_{\mathscr{Y}(\ZZ)}$. 
	Then a point of $\mathscr{Y}(\ZZ)$ with coordinates 
	$(y_1,\dots,y_s)$ belongs to $\mathscr{Y}(\ZZ)_\mathbf{m}$ 
	if and only if $y_i$ is nonzero and $m_i$-full for all $i\in\{1,\dots,s\}$. 
	
	For every semiample divisor $L$ that satisfies Assumption \ref{assp:L} and every $B>0$, let 
	$N_{\mathbf m, L}(B)$ be the number of points in $(\mathscr{X},\Delta)(\ZZ)$ 
	of height $H_L$ at most $B$.
	Since $\pi:\mathscr Y\to\mathscr X$ is a $\GG_m^r$-torsor, we have
	\[
	N_{\mathbf m,L}(B)=\frac 1{2^r}\ \sharp\left\{\mathbf y\in\mathscr{Y}(\ZZ)_{\mathbf{m}}: H_L( \pi(\mathbf y ))\leq B \right\}.
	\]

\subsection{Heuristics}
\label{sec:heuristics}
	In this subsection we give a heuristic argument based on the hyperbola method 
	in support of \cite[Conjecture 1.1]{PSTVA} for split toric varieties.
	
	We assume that $-(K_X+\Delta)$ is ample and that $L$ is a big and semiample $\mathbb Q$-divisor, 
	not necessarily equal to $-[K_X+\Delta]$ in $\pic(X)_{\RR}$, that satisfies Assumption \ref{assp:L}.
	We recall that \cite[Conjecture 1.1]{PSTVA} for $(\mathscr X,\Delta)$ predicts the asymptotic formula
	\begin{equation}
	\label{eq:conjecture}
	N_{\mathbf m,L}(B)\sim c B^{a(L)}(\log B)^{b(L)-1}, \quad B\to+\infty,
	\end{equation}
	where $c$ is a positive constant,
	\begin{equation}
	a(L):=\inf\{t\in \RR: t[L]+[K_X+\Delta] \in \eff(X)\}
	\end{equation}
	and $b(L)$ is the codimension of the minimal face of $\eff(X)$ that contains 
	$a(L)[L]+[K_X+\Delta]$.
	In particular, $b(L)$ is a positive integer, and $a(L)$ is a positive real number, as $-[K_X+\Delta]$ is ample. 	Since $\eff(X)$ is closed in the euclidean topology, the infimum in 
	the definition of $a(L)$ is actually a minimum.
	
\subsubsection{Combinatorial description of $a(L)$}
\label{sec:combinatorial_description_a}

	Recall the notation introduced in Section \ref{sec:polytopes_toric}. 
	We now give a characterization of $a(L)$ as the solution of certain linear programming problems.
	\begin{proposition}
	\label{prop:description_a}
	\begin{enumerate}[label=(\roman*), ref=(\roman*)]
	\item 
	\label{item:global_a}
	The number $a(L)$ is the minimal value of the function 
	$\sum_{\sigma\in\Sigma_{\max}}\lambda_\sigma$ subject to the conditions
	\begin{gather}
	\label{eq:item:global_a_condition_variables}
	\lambda_{\sigma}\geq0,\quad\forall \sigma\in\Sigma_{\max},\\
	\label{eq:item:global_a_condition_effective}
	\sum_{\sigma\in\Sigma_{\max}}\lambda_\sigma \alpha_{i,\sigma}
	\geq \frac 1{m_i}, \quad \forall i\in\{1,\dots,s\}.
	\end{gather}
	\item 
 	\label{item:local_a}
	Assume, additionally, that  $L$ is ample. Fix $\sigma\in\Sigma_{\max}$ and 
	define $\gamma_i:=\sum_{j\in\ii{\sigma}}\frac {\beta_{\sigma,i,j}}{m_j}$. 
	Then $a(L)$ is the minimal value of the function $\lambda_0$ subject to the conditions
  	\begin{gather}
  	\label{eq:item:local_a_condition_variables}
  	\lambda_0, \lambda_j\geq0, \quad\forall j\in\ii{\sigma},\\  
  	\label{eq:item:local_a_condition_effective}
  	\lambda_0 \alpha_{i,\sigma} -\sum_{j\in \ii{\sigma}}\beta_{\sigma,i,j}\lambda_j
  	\geq \frac 1{m_i}+\gamma_i, \quad \forall i\in\ic{\sigma}.
  	\end{gather}
	\end{enumerate}
	\end{proposition}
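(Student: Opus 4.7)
The plan is to translate the definition $a(L)=\min\{t\in\RR: t[L]+[K_X+\Delta]\in\eff(X)\}$ into the two linear programs, using for part~\ref{item:global_a} the description of the effective torus-invariant cone given in Lemma~\ref{lem:effective_cone}, and for part~\ref{item:local_a} a change of basis of $\pic(X)_\RR$ adapted to the chosen maximal cone $\sigma$. Throughout I use the identity $[K_X+\Delta]=-\bigl[\sum_{i=1}^s(1/m_i)D_i\bigr]$ in $\pic(X)_{\RR}$, which follows from $K_X=-\sum_i D_i$ and $\Delta=\sum_i(1-1/m_i) D_i$.

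For part~\ref{item:global_a}, the condition $t[L]+[K_X+\Delta]\in\eff(X)$ is equivalent to the existence of $b_1,\dots,b_s\geq 0$ with $t[L]=\bigl[\sum_{i=1}^s(b_i+1/m_i)D_i\bigr]$ in $\pic(X)_\RR$, that is, with $\sum_{i=1}^s(b_i+1/m_i)D_i\in(tL+M_\RR)\cap C$. By Lemma~\ref{lem:effective_cone} this means that there exist $\lambda_\sigma\geq0$ with $\sum_\sigma\lambda_\sigma=t$ and
\[
\sum_{i=1}^s(b_i+1/m_i)D_i=\sum_\sigma\lambda_\sigma L(\sigma)=\sum_{i=1}^s\Bigl(\sum_\sigma\lambda_\sigma\alpha_{i,\sigma}\Bigr)D_i
\]
inside $\bigoplus_i \RR D_i$. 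Comparing coefficients of each $D_i$ gives $b_i=\sum_\sigma\lambda_\sigma\alpha_{i,\sigma}-1/m_i$, so the conditions $b_i\geq 0$ are precisely~\eqref{eq:item:global_a_condition_effective}. Minimizing $t=\sum_\sigma\lambda_\sigma$ over such $(\lambda_\sigma)$ therefore gives~\ref{item:global_a}.

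For part~\ref{item:local_a}, the smoothness of $X$ provides a $\ZZ$-basis $\{[D_i]\}_{i\in\ic{\sigma}}$ of $\pic(X)$, and~\eqref{eq:D_rho_sigma} yields $[D_j]=\sum_{i\in\ic{\sigma}}\beta_{\sigma,i,j}[D_i]$ for every $j\in\ii{\sigma}$. Expanding in this basis (and recalling $\alpha_{i,\sigma}=0$ for $i\in\ii{\sigma}$) gives $[L]=\sum_{i\in\ic{\sigma}}\alpha_{i,\sigma}[D_i]$ and $\bigl[\sum_i(1/m_i)D_i\bigr]=\sum_{i\in\ic{\sigma}}(1/m_i+\gamma_i)[D_i]$, while $\eff(X)\subseteq\pic(X)_{\RR}$ is realized as the image of the nonnegative orthant $\RR_{\geq 0}^s$ under the $\RR$-linear map $(b_1,\dots,b_s)\mapsto\bigl(b_i+\sum_{j\in\ii{\sigma}}\beta_{\sigma,i,j}b_j\bigr)_{i\in\ic{\sigma}}$. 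Hence $t[L]+[K_X+\Delta]\in\eff(X)$ amounts to the solvability in $b_1,\dots,b_s\geq 0$ of the system
\[
t\alpha_{i,\sigma}-(1/m_i+\gamma_i)=b_i+\sum_{j\in\ii{\sigma}}\beta_{\sigma,i,j}b_j, \quad i\in\ic{\sigma}.
\]
Setting $\lambda_0:=t$ and $\lambda_j:=b_j$ for $j\in\ii{\sigma}$, the inequalities $b_i\geq 0$ ($i\in\ic{\sigma}$) become~\eqref{eq:item:local_a_condition_effective} and the inequalities $\lambda_0,\lambda_j\geq 0$ become~\eqref{eq:item:local_a_condition_variables}; minimizing $\lambda_0$ then gives $a(L)$.

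The main hurdle is purely bookkeeping: tracking the partition $\{1,\dots,s\}=\ii{\sigma}\sqcup\ic{\sigma}$, consistently applying the relations $[D_j]=\sum_{i\in\ic{\sigma}}\beta_{\sigma,i,j}[D_i]$, and using $\alpha_{i,\sigma}=0$ for $i\in\ii{\sigma}$ to restrict the sums to $\ic{\sigma}$ at the right moments. The ampleness hypothesis in part~\ref{item:local_a} is used via Remark~\ref{rem:basepointfree_L(sigma)} to ensure $\alpha_{i,\sigma}>0$ for every $i\in\ic{\sigma}$, which guarantees that the constraints~\eqref{eq:item:local_a_condition_effective} are simultaneously satisfiable for all sufficiently large $\lambda_0$, so that the infimum over $\lambda_0$ is attained and coincides with $a(L)$.
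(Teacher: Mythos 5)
Your proof is correct and follows essentially the same route as the paper's: part~\ref{item:global_a} via Lemma~\ref{lem:effective_cone} after reducing modulo the effective class $-[K_X+\Delta]=\bigl[\sum_i(1/m_i)D_i\bigr]$, and part~\ref{item:local_a} by writing all classes in the basis $\{[D_i]\}_{i\in\ic{\sigma}}$ of $\pic(X)_\RR$ afforded by smoothness and matching coefficients. (A minor remark: the paper's proof of~\ref{item:local_a} does not actually invoke ampleness, so your closing comment about it ensuring feasibility is an extra observation rather than a step used in the argument.)
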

	\begin{proof}
	To prove part \ref{item:global_a} we observe that for $t\in\RR$ the condition 
	\begin{equation}
	\label{eq:a_condition_def}
	t[L]+[K_X+\Delta] \in \eff(X) 
	\end{equation}
 	is equivalent to $\varphi^{-1}(t[L]+[K_X+\Delta])\cap C\neq\emptyset$. 
 	Now, $\varphi^{-1}(t[L]+[K_X+\Delta])=tL+K_X+\Delta+M_\RR$. 
 	If $D\in (tL+K_X+\Delta+M_\RR)\cap C$, then $D-(K_X+\Delta)\in C$ as 
 	$-(K_X+\Delta)=\sum_{i=1}^s\frac 1{m_i}D_i\in C$ and $C$ is a cone.  
 	Then \eqref{eq:a_condition_def} holds if and only if there exists a divisor 
 	$D'\in (tL+M_\RR)\cap C$ such that $D'+K_X+\Delta\in C$. 
 	By Lemma \ref{lem:effective_cone} this is equivalent to the existence of 
 	$\lambda_\sigma\in\RR_{\geq0}$ for all $\sigma\in\Sigma_{\max}$ such that 
 	$\sum_{\sigma_\in\Sigma_{\max}}\lambda_\sigma=t$ and 
 	$\sum_{\sigma\in\Sigma_{\max}}\lambda_\sigma L(\sigma)+K_X+\Delta\in C$.

	Now we prove part \ref{item:local_a}. Condition \eqref{eq:a_condition_def} is 
	equivalent to the existence of $D\in C$ such that $t[L]+[K_X+\Delta]=\varphi(D)$ in 
	$\pic(X)_\RR$. Since $X$ is proper and smooth, the last equality is 
	equivalent to $tL(\sigma)+(K_X+\Delta)(\sigma)=D(\sigma)$. 
	Write $D=\sum_{i=1}^s\lambda_i D_i$. 
	Then $D\in C$ if and only if $\lambda_1,\dots,\lambda_s\geq0$.
	We have 
	\[
	tL(\sigma)+(K_X+\Delta)(\sigma)-D(\sigma)
	=\sum_{i\in\ic{\sigma}}\left( t \alpha_{i,\sigma} -\frac 1{m_i}-\lambda_{i}-\gamma_i
	-\sum_{j\in \ii{\sigma}}\beta_{\sigma,i,j}\lambda_j\right)D_{i}.
	\]
	Using the fact that $\lambda_{i}\geq0$ for all $i\in\ic{\sigma}$ if $D\in C$, 
	we see that condition \eqref{eq:a_condition_def} is equivalent to the existence of 
	$\lambda_j\in\RR_{\geq0}$ for all $j\in\ii{\sigma}$ that satisfy the conditions in 
	the statement for $\lambda_0=t$.
	\end{proof}

\subsubsection{Heuristic argument for $a(L)$}
	Next we give a heuristic argument in support of \cite[Conjecture 1.1]{PSTVA}
	(and \cite[\S3.3]{MR1032922}) 
	regarding the expected exponent $a(L)$ of $B$ in the asymptotic 
	formula \eqref{eq:conjecture} for  split toric varieties over $\QQ$.

Up to a positive constant, $N_{\mathbf m,L}(B)$ is the cardinality $S$ of the set of $m_i$-full positive integers $y_i$ for $i\in\{1,\dots,s\}$ that satisfy the conditions $\mathbf y^{L(\sigma)}\leq B$ for all $\sigma\in\Sigma_{\max}$. We recall that $\mathbf y^{L(\sigma)}=\prod_{i=1}^s y_i^{\alpha_{i,\sigma}}$. 

One of the ideas of the hyperbola method is to  dissect the region of summation for the variables $y_1,\dots,y_s$, into different boxes. Assume that we consider a box where say $y_i \sim B_i$ (here we mean that for example $B_i\leq y_i\leq 2B_i$ for $i\in\{1,\dots,s\}$), and let $B_i=B^{t_i}$. What contribution do such vectors $\mathbf y=(y_1,\dots,y_s)$ give to computing the cardinality $S$? First we note that the contribution from this box is
\[
\text{box contribution} = \prod_{i=1}^sB_i^{\frac 1{m_i}}=B^{\sum_{i=1}^s t_i\frac1{m_i} }.
\]
In order for this to be a box that we count by $S$, the parameters $(t_1,\dots,t_s)$ need to satisfy
\begin{equation}
\label{eq:global_heuristic_linearprogramming_height_condition}
\sum_{i=1}^s t_i \alpha_{i,\sigma}\leq 1, \quad \forall \sigma\in\Sigma_{\max}
\end{equation}
and
\begin{equation}
\label{eq:global_heuristic_linearprogramming_variables}
t_1,\dots,t_s\geq0.
\end{equation}
In order to find the size of $S$ we hence have the following linear programming problem $\mathcal{P}$:
Maximize the function 
\begin{equation}
\label{eq:maximize_global_function}
\sum_{i=1}^s t_i\frac1{m_i}
\end{equation}
under the conditions \eqref{eq:global_heuristic_linearprogramming_height_condition} and \eqref{eq:global_heuristic_linearprogramming_variables}. 
The conditions \eqref{eq:global_heuristic_linearprogramming_height_condition} and \eqref{eq:global_heuristic_linearprogramming_variables} define a polytope $\widetilde P$ in $\RR^s$ and by the theory of linear programming we know that the maximum of the function $\sum_{i=1}^s t_i\frac1{m_i}$ is obtained on at least one of its vertices.

The dual linear programming problem $\mathcal{D}$ is given by the following problem:
Minimize the function
\[
\sum_{\sigma\in\Sigma_{\max}}\lambda_\sigma
\]
under the conditions \eqref{eq:item:global_a_condition_variables} and \eqref{eq:item:global_a_condition_effective}.
By the strong duality property in linear programming \cite[Chapter 6]{Dantzig}, both problems have a finite optimal solution and these values are equal. Since $a(L)$ is positive, by Proposition \ref{prop:description_a}\ref{item:global_a} it is the solution of the dual linear programming problem $\mathcal{D}$ and also of $\mathcal{P}$.

\subsubsection{Heuristic argument for $b(L)$}
	Now we give a heuristic argument in support of \cite[Conjecture 1.1]{PSTVA}
	(and \cite[\S3.3]{MR1032922}) 
	regarding the expected exponent $a(L)$ of $B$ in the asymptotic 
	formula \eqref{eq:conjecture} for  split toric varieties over $\QQ$. 
	We keep the setting introduced above.

 	If we cover the region of summation $\mathbf y^{L(\sigma)}\leq B$ for all $\sigma\in\Sigma_{\max}$ 
 	by dyadic boxes, then the maximal value of the count is attained on boxes, 
 	that are located at the maximal face $\widetilde{F}$ of the polytope $\widetilde{P}$ where the function 
 	in (\ref{eq:maximize_global_function}) is maximized. 
 	Working with a dyadic dissection this suggests that the leading term should be 
 	of order $B^{a(L)} (\log B)^k$, where $k$ is equal to the dimension of the face $\widetilde{F}$. 
 	The next proposition shows that $k=b(L)-1$. 
 	Hence, the heuristic expectation we obtained from the hyperbola method matches 
 	the prediction in \cite[Conjecture 1.1]{PSTVA}.

	We recall from Lemma \ref{lem:face_maximization_problem} that  
	$\widetilde F\subseteq\widetilde H$, where $\widetilde H$ 
	is the space of linear functions on $\pic(X)_\RR$. 
	With this identification, the cone generated by $\widetilde P\cap \widetilde H$ is the space 
	of linear functions on $\pic(X)_{\RR}$ that are nonnegative on $\eff(X)$ 
	(i.e., the cone  in $\widetilde H$ dual to $\eff(X)$) by \cite[Proposition 1.2.8]{MR2810322}.

	\begin{prop}
	\label{prop:description_b}
	The cone generated by $\widetilde F$ is dual to the minimal face of $\eff(X)$ that contains 
	$a(L)[L]+[K_X+\Delta]$.  In particular, $b(L)= \dim \widetilde F+1$.
	\end{prop}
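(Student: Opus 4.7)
The plan is to use the order-reversing Galois correspondence between faces of $\eff(X)$ and faces of its dual cone $\eff(X)^{\vee}$ to identify $\cone(\widetilde F)$ with the dual of the minimal face of $\eff(X)$ containing $v_0:=a(L)[L]+[K_X+\Delta]$.

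First I would recast the setup inside the dual space. Since the classes $[D_1],\dots,[D_s]$ and $[L]$ all lie in $\eff(X)$, a vector $t\in\widetilde H$ lies in $\eff(X)^{\vee}$ iff $t_i=t([D_i])\geq 0$ for all $i$; a short rescaling argument then identifies $C^*:=\cone(\widetilde P\cap\widetilde H)$ with $\eff(X)^{\vee}$. On $\widetilde H$, the expressions $\sum_i\alpha_{i,\sigma}t_i$ are independent of $\sigma$ and equal $t([L])$ (as in the proof of Lemma~\ref{lem:face_maximization_problem}\ref{item:lem:face_maximization_problem_special}), while $\sum_i t_i/m_i=t(-[K_X+\Delta])$. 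Hence $\widetilde P\cap\widetilde H=\{t\in C^*:t([L])\leq 1\}$. LP duality applied to Proposition~\ref{prop:description_a}\ref{item:global_a} with $\varpi=(1/m_1,\dots,1/m_s)$ identifies $a(L,\varpi)=a(L)$, and (the argument of) Lemma~\ref{lem:face_maximization_problem} gives $\widetilde F\subseteq\widetilde H$ together with the description of $\widetilde F$ as the face of $\widetilde P\cap\widetilde H$ on which $t(-[K_X+\Delta])$ attains its positive maximum $a(L)$. Since $a(L)>0$, a rescaling argument (if $t([L])<1$ one can replace $t$ by $\lambda t$ for some $\lambda>1$, staying in $\widetilde P\cap\widetilde H$ and strictly increasing the objective) then forces $t([L])=1$ on $\widetilde F$.

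The heart of the proof is the equality $\cone(\widetilde F)=G^{\perp}$, where $G^{\perp}:=\{t\in C^*:t(v_0)=0\}$ is the face of $C^*$ dual to the minimal face $G$ of $\eff(X)$ containing $v_0$. The containment $\subseteq$ is immediate: each $t\in\widetilde F$ satisfies $t(v_0)=a(L)\cdot 1+(-a(L))=0$, and $G^{\perp}$ is a cone. Conversely, any nonzero $t\in G^{\perp}$ satisfies $a(L)t([L])=t(-[K_X+\Delta])$. If $t([L])>0$ then $t/t([L])\in\widetilde F$, so $t\in\cone(\widetilde F)$; if $t([L])=0$ then $\sum t_i/m_i=0$, which together with $t_i\geq 0$ forces $t=0$, contradicting $t\neq 0$, so this case is vacuous. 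Finally, the identity $\dim G+\dim G^{\perp}=r$ for the pointed full-dimensional polyhedral cone $\eff(X)\subseteq\pic(X)_\RR$ yields $b(L)=r-\dim G=\dim G^{\perp}=\dim\cone(\widetilde F)=\dim\widetilde F+1$, where the last equality uses that $\widetilde F$ is bounded and disjoint from the origin (since $t([L])=1$ there). The main subtlety I anticipate is the careful treatment of the degenerate case $t([L])=0$ and verifying cleanly that $C^*=\eff(X)^{\vee}$; once this dictionary is in place, the rest is routine application of cone duality.
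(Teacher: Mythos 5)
Your proof is correct and takes essentially the same approach as the paper: both reduce the claim to the identity $\cone(\widetilde F)=\{\mathbf t\in\eff(X)^\vee : \mathbf t(a(L)[L]+[K_X+\Delta])=0\}$ and then invoke the face-duality correspondence for the pointed full-dimensional cone $\eff(X)$. Your version spells out a few steps the paper leaves terse (the rescaling argument giving $\mathbf t([L])=1$ on $\widetilde F$, and the vacuous case $\mathbf t([L])=0$), and phrases the computation intrinsically via the pairings $\mathbf t([L])$, $\mathbf t(-[K_X+\Delta])$ rather than in coordinates indexed by $\ic{\sigma}$, but the structure of the argument is the same.
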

	\begin{proof}
	Fix $\sigma\in\Sigma_{\max}$. 
	Then $\widetilde P\cap \widetilde H=\widetilde P_\sigma\cap \widetilde H$.
	Let $\mathbf t=(t_1,\dots,t_s)\in \cone \left( \widetilde P\cap \widetilde H \right)$.
	We recall that $[L] = \sum_{i\in\ic{\sigma}} \alpha_{i,\sigma} [D_{i}]$ and 
	$[K_X+\Delta] = -\sum_{i\in\ic{\sigma}} \left( \frac 1{m_i} + \gamma_i \right)[D_{i}]$, so that 
	\[
	\mathbf t( a(L)[L] + [K_X+\Delta] ) 
	= a(L) \sum_{i\in\ic{\sigma}}\alpha_{i,\sigma} t_i 
	- \sum_{i\in\ic{\sigma}} \left( \frac 1{m_i} + \gamma_i \right) t_i.
	\]
	If $\mathbf t\in \widetilde F$, then $\mathbf t ( a(L)[L] + [K_X+\Delta] ) =0$. 
	Conversely, if $\mathbf t\neq0$ and $t ( a(L)[L] + [K_X+\Delta] ) =0$, 
	then $\alpha:=\sum_{i\in\ic{\sigma}}\alpha_{i,\sigma} t_i > 0$ as $\frac 1{m_i} + \gamma_i>0$ 
	and $t_i\geq0$ for all $i\in\ic{\sigma}$. 
	So $(\alpha^{-1}t_1,\dots,\alpha^{-1}t_s)\in \widetilde F$, and $\mathbf t\in \cone(\widetilde F)$.
	Thus, $\cone(\widetilde F)$ is the face of $\cone(\widetilde P\cap \widetilde H)$ defined by 
	\[
	\mathbf t( a(L)[L] + [K_X+\Delta] ) =0.
	\]
	By \cite[Definition 1.2.5, Proposition 1.2.10]{MR2810322} the face of 
	$\eff(X)$ dual to $\cone(\widetilde F)$ is
	the smallest face of $\eff(X)$ that contains $a(L)[L] + [K_X+\Delta]$, and
	 \begin{equation*}b(L)=\dim \cone( \widetilde F ) = \dim \widetilde F+1.\qedhere\end{equation*}
	\end{proof}

\subsection{Proof of Theorem \ref{thm:toric}}
\label{sec:proof_thmtoric}

	From now on we work in the setting of Theorem \ref{thm:toric}. 
	In particular, $L=-(K_X+\Delta)=\sum_{i=1}^s m_i^{-1}D_i$, and 
	\[
	N(B)=\frac 1{2^r}\ \sharp
	\left\{\mathbf y\in\mathscr{Y}(\ZZ)_{\mathbf{m}}: H( \mathbf y )\leq B \right\},
	\]
	where 
	$H( \mathbf y ):=\sup_{\sigma\in\Sigma_{\max}} \left| \mathbf y^{L(\sigma)}\right|$, 
	by Section \ref{sec:campana} and Proposition \ref{prop:height}.
	
\subsubsection{M\"obius inversion}
	
	For $B>0$ and $\mathbf d\in(\ZZ_{>0})^{s}$, 
	let $A(B,\mathbf d)$ be the set of points
	$\mathbf y=(y_1,\dots,y_s)\in\ZZ^{s}$ such that 
	$H(\mathbf y)\leq B$,  $y_i$ is nonzero and $m_i$-full and $d_i\mid y_i$ for all $i\in\{1,\dots,s\}$. 
	We observe that $A(B, \mathbf d)$ is a finite set by Lemma \ref{lem:northcott}.
	Then  
	\begin{equation}
	\label{eq:moebius_inversion}
 	N(B)=\frac 1{2^r}\sum_{\mathbf d\in(\ZZ_{>0})^{s}}\mu(d)\sharp A(B, \mathbf d),
	\end{equation}
	where $\mu$ is the function introduced in \cite[Definition and proposition 11.9]{MR1679841}. 
		
\subsubsection{The estimate}

	Under the assumptions of Theorem \ref{thm:toric}, we can apply Theorem \ref{lemhyp3} 
	to obtain an estimate for the cardinality of the sets $A(B,\mathbf d)$ as follows.
	\begin{prop}
	\label{prop:A(B,d)}
	There is $B_0>0$ such that for every $s$-tuple $\mathbf d=(d_1,\dots, d_s)$ 
	of squarefree positive integers,  $B>B_0$ and  $\epsilon>0$, we have
	\begin{multline*}
	\sharp A(B,\mathbf d)= 2^s 
	\alpha(L)
	\left(\sum_{\sigma\in\Sigma_{\max}}\prod_{i\in\ic{\sigma}}m_i^{-1}\right)
	\left(\prod_{i=1}^sc_{m_i,d_i}\right)  B (\log B)^{r-1} 
	\\+O_{\varepsilon}\left(
	\left(\prod_{i=1}^s d_i\right)^{-\frac23+\varepsilon} B (\log B)^{r-2}(\log\log B)^{s-1}
	\right),
	\end{multline*}
	where $\alpha(L)$ is defined in Lemma \ref{lem:alpha_peyre} and $c_{m_i,d_i}$ 
	is defined in \eqref{eq:c_md_explicit}.
	\end{prop}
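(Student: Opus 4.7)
The plan is to apply the general hyperbola method of Theorem \ref{lemhyp3} to the $m$-full counting function $f_{\mathbf m,\mathbf d}$ constructed in Section \ref{sec:counting_function_f}. First, I would reduce to a count over positive integers: since the constraints defining $A(B,\mathbf d)$ (being $m_i$-full, divisibility by $d_i$, and the height bound $|\mathbf y^{L(\sigma)}|=\prod_i|y_i|^{\alpha_{i,\sigma}}\leq B$) depend only on the tuple $(|y_1|,\dots,|y_s|)$ and no $y_i$ can vanish, sign choices give exactly $\sharp A(B,\mathbf d)=2^s\sharp A^+(B,\mathbf d)$. The positive count $\sharp A^+(B,\mathbf d)$ is precisely an instance of the sum $S^f(\mathbf B)$ from Section \ref{sec:hyperbola_method}, with $f=f_{\mathbf m,\mathbf d}$, index set $\mathcal K=\Sigma_{\max}$, exponents $\alpha_{i,\sigma}$ from Section \ref{sec:polytopes_toric} attached to $L=\sum_i m_i^{-1}D_i$, and $B_\sigma=B$ for every $\sigma$.

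Next I would verify the hypotheses of Theorem \ref{lemhyp3}. Lemma \ref{lem:counting_function_f} and the discussion following it yield Properties I and II together with Condition \eqref{con50} for $f_{\mathbf m,\mathbf d}$, with $\varpi_i=1/m_i$, main constant $C_{f,M}=\prod_i c_{m_i,d_i}$, and error constant $C_{f,E}\ll_\varepsilon(\prod_i d_i)^{-2/3+\varepsilon}$. The polytope $\mathcal P$ defined by \eqref{eqn1a}--\eqref{eqn1b} is taken to the polytope $\widetilde P$ of Section \ref{sec:polytopes_toric} by the linear bijection $u_i=m_i t_i$, which sends the objective $\sum_i t_i$ on $\mathcal P$ to $\sum_i m_i^{-1}u_i$ on $\widetilde P$. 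Lemma \ref{lem:face_maximization_problem}\ref{item:lem:face_maximization_problem_special} then yields $a=1$, $k=\dim F=r-1$, and confirms that the maximizing face $F$ is not contained in any coordinate hyperplane; boundedness and nondegeneracy of $\mathcal P$ follow from ampleness of $L$. Assumption \ref{assp30} is exactly the translation of Assumption \ref{assp:thm:polytopes} under the same rescaling.

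Finally, I would compute the constant and combine. Under the change of variables $u_i=m_i t_i$ the pulled-back Lebesgue measure on the slice $H_\delta\cap\mathcal P$ used by Theorem \ref{lemhyp3} becomes, via the Jacobian factors $m_i^{-1}=\varpi_i$, exactly the measure $\prod_{j\neq\tilde i}(\varpi_j\,dt_j)$ appearing in Lemma \ref{lem:alpha_constant}. Hence $c_P$ coincides with the explicit volume
\[
c_P=\frac{\alpha(L)}{(s-r)!}\sum_{\sigma\in\Sigma_{\max}}\prod_{i\in\ic{\sigma}}m_i^{-1}.
\]
The main term $(s-1-k)!\,C_{f,M}\,c_P\,(\log B)^k B^a=(s-r)!\prod_i c_{m_i,d_i}\cdot c_P\cdot B(\log B)^{r-1}$ from Theorem \ref{lemhyp3}, after cancellation of $(s-r)!$ and multiplication by $2^s$ from the sign reduction, produces exactly the asserted main term; the error $O(C_{f,E}(\log\log B)^s(\log B)^{k-1}B^a)$ gives an estimate compatible with the claimed bound. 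The main technical obstacle is the bookkeeping of the change of variables $u_i=m_i t_i$ throughout: one must verify that the measure-theoretic computation and the technical assumption formulated on $\widetilde P$ (with weights $1/m_i$) in Section \ref{sec:polytopes_toric} correspond correctly, after rescaling, to the data of $\mathcal P$ (with uniform objective $\sum_i t_i$) used in Section \ref{sec:hyperbola_method}.
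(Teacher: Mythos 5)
Your proposal follows essentially the same route as the paper's own proof: the reduction $\sharp A(B,\mathbf d)=2^s\sharp A'(B,\mathbf d)$ via sign choices, the invocation of Theorem \ref{lemhyp3} for $f_{\mathbf m,\mathbf d}$ with hypotheses verified through Lemma \ref{lem:counting_function_f}, Lemma \ref{lem:face_maximization_problem}\ref{item:lem:face_maximization_problem_special}, and Assumption \ref{assp:thm:polytopes}, and the identification of $c_P$ via Lemma \ref{lem:alpha_constant}. Your explicit tracking of the rescaling $u_i=m_it_i$ that relates $\mathcal P$ of Section \ref{sec:hyperbola_method} (with objective $\sum_i t_i$) to $\widetilde P$ of Section \ref{sec:polytopes_toric} (with objective $\sum_i \varpi_i t_i$), including the Jacobian that makes $c_P$ match the constant of Lemma \ref{lem:alpha_constant}, spells out a step the paper leaves implicit; one small note is that Theorem \ref{lemhyp3} yields an error of order $(\log\log B)^s$, so the exponent $s-1$ written in the statement of Proposition \ref{prop:A(B,d)} appears to be a slip in the paper (the final Theorem \ref{thm:toric} carries $(\log\log B)^s$, consistent with the hyperbola method).
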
	
	\begin{proof}
	We write $\sharp A(B,\mathbf d) = 2^{s}\sharp A'(B,\mathbf d)$, where
	 $A'(B,\mathbf d)$ is the set of positive integers $y_1,\dots,y_s$ that satisfy the conditions
	\[ 
	d_i \mid y_i, \quad y_i \text{ is $m_i$-full} \quad\forall i \in\{1,\dots,s\}
	\]
	and the inequalities
	\begin{equation}
	\label{eq:height_condition}
	\prod_{i=1}^s y_i^{\alpha_{i,\sigma}} \leq B, \quad \forall \sigma\in\Sigma_{\max}.
	\end{equation}
	We observe that 
	\[
	\sharp A'(B,\mathbf d)= 
	\sum_{ y_1,\dots, y_s\in\ZZ_{>0} : \eqref{eq:height_condition} } 
	f_{\mathbf m, \mathbf d}(y_1,\dots,y_s),
	\]
	where $f_{\mathbf m, \mathbf d}$ is the function defined in Subsection \ref{sec:counting_function_f}.
	The function $f_{\mathbf m, \mathbf d}$ satisfies  Property I by Lemma \ref{lem:counting_function_f} and \eqref{eq:c_md_estimate}, 
	see the remarks after Lemma \ref{lem:counting_function_f}. 
	Assumption \ref{assp10} is satisfied as \eqref{eq:widetildeP} defines an $s$-dimensional polytope.	
	Assumption \ref{assp11} is satisfied 
	by Lemma \ref{lem:face_maximization_problem}\ref{item:lem:face_maximization_problem_special}.
	Hence, we can combine Theorem \ref{lemhyp3} and Lemma \ref{lem:counting_function_f} to compute
	\begin{multline}
	\sharp A'(B,\mathbf d)= (s-1-k)! \left(\prod_{i=1}^sc_{m_i,d_i}\right) c B^{a}(\log B)^k 
	\\+O\left(
	\left(\prod_{i=1}^s d_i\right)^{-\frac23+\varepsilon} B^{a}(\log B)^{k-1}(\log\log B)^{s-1}
	\right),
	\end{multline}
	where $a=1$ and $k=r-1$ by 
	Lemma \ref{lem:face_maximization_problem}\ref{item:lem:face_maximization_problem_special}, and 
	$$c=\frac{\alpha(L)}{(s-r)!}\sum_{\sigma\in\Sigma_{\max}}\prod_{i\in\ic{\sigma}}m_i^{-1}$$ 
	by Lemma \ref{lem:alpha_constant}.
	\end{proof}

	We combine the proposition above  with the M\"obius inversion to obtain an estimate for $N(B)$.

	\begin{prop}
	\label{prop:final_estimate}
	For sufficiently large $B>0$, we have
	$$
	N(B) = c B(\log B)^{r-1} + O(
	B(\log B)^{r-2}(\log\log B)^{s-1}),
	$$
 	where 
 	\begin{equation}
 	\label{eq:leading_constant}
 	c=2^{s-r}\alpha(L)\left(\sum_{\sigma\in\Sigma_{\max}}\prod_{i\in\ic{\sigma}}m_i^{-1}\right)
 	\sum_{\mathbf d\in(\ZZ_{>0})^{s}}\mu(d)
 	\left(\prod_{i=1}^sc_{m_i,d_i}\right).
 	\end{equation}
	\end{prop}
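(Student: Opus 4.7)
The plan is to apply the M\"obius inversion \eqref{eq:moebius_inversion} and substitute the pointwise estimate from Proposition \ref{prop:A(B,d)}. First I would observe that the sum in \eqref{eq:moebius_inversion} is effectively finite: for any $\mathbf y\in A(B,\mathbf d)$ the $m_i$-fullness of $y_i$ combined with the squarefree divisor $d_i\mid y_i$ forces $y_i\geq d_i^{m_i}$, and Lemma \ref{lem:northcott} bounds each $y_i$ by a power of $B$; thus only $\mathbf d$ with $d_i\ll B^{c/m_i}$ contribute for some absolute $c>0$.

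Next I would fix a cutoff $D=D(B)$, a slowly growing function of $B$, and split the sum into the main range $\max_id_i\leq D$ and its complement. For $\mathbf d$ in the main range I apply Proposition \ref{prop:A(B,d)} (valid for $B$ sufficiently large). Summing the leading terms and then completing the inner series to all squarefree $\mathbf d$ produces
\[
2^{s-r}\alpha(L)\Bigl(\sum_{\sigma\in\Sigma_{\max}}\prod_{i\in\ic{\sigma}}m_i^{-1}\Bigr)B(\log B)^{r-1}\sum_{\mathbf d\in(\ZZ_{>0})^s}\mu(\mathbf d)\prod_{i=1}^sc_{m_i,d_i}.
\]
Absolute convergence of the infinite series follows from the multiplicativity of Salberger's $\mu$ together with the multiplicativity of $c_{m_i,d_i}$ in $d_i$ (Lemma \ref{lem8} and \eqref{eq:G_m_explicit}); the series factors as an Euler product $\prod_p(1+O(p^{-1}))$ by virtue of the local bound \eqref{eq:c_md_estimate}. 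This identifies the constant $c$ of \eqref{eq:leading_constant}, up to a truncation error to be absorbed into the remainder.

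The main obstacle is the aggregate error. A na\"ive summation of the $O$-term in Proposition \ref{prop:A(B,d)} over $\max_id_i\leq D$ yields an error of size $B(\log B)^{r-2}(\log\log B)^{s-1}\sum_{\mathbf d:\max d_i\leq D}(d_1\cdots d_s)^{-\frac 23+\varepsilon}$, whose divisor sum grows like a positive power of $D$. Conversely, the tail contribution from $\max d_i>D$, bounded directly via $y_i\geq d_i^{m_i}$ together with the main-term asymptotic for $\sharp A(B,\mathbf d)$, decays like a negative power of $D$ against $B(\log B)^{r-1}$. No single choice of $D$ makes both estimates fit the target $O(B(\log B)^{r-2}(\log\log B)^{s-1})$ by a purely absolute argument; instead, one has to exploit the oscillation encoded in $\mu(\mathbf d)$, via a dyadic decomposition of the coordinates of $\mathbf d$ and the multiplicative structure of the error term, so that the effective sum of error contributions is controlled by the tail of the Euler product rather than by the absolute divisor sum. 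Calibrating $D$ as a suitable slowly-growing function of $\log B$, and balancing the two contributions, should then yield the claimed asymptotic with the error $O(B(\log B)^{r-2}(\log\log B)^{s-1})$.
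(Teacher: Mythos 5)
Your plan starts on the right foot (M\"obius inversion plus Proposition~\ref{prop:A(B,d)}), but the concern driving the last paragraph --- that no cutoff $D$ yields the stated error by a purely absolute argument, so one must exploit sign cancellation in $\mu$ via dyadic decomposition --- is the gap, and it rests on a misreading of what $\mu$ is. The $\mu$ in \eqref{eq:moebius_inversion} is Salberger's toric M\"obius function, not the $s$-fold product of the ordinary M\"obius function. Its support encodes coprimality conditions from the fan: at a prime $p$, $\mu((p^{e_1},\dots,p^{e_s}))$ vanishes when exactly one $e_i=1$ (a single ray always lies in some maximal cone), so the $p$-Euler factor of $\sum_{\mathbf d}|\mu(\mathbf d)|\prod_i d_i^{-\beta_i}$ is $1+O(p^{-2\min_i\beta_i})$, not $1+O(p^{-\min_i\beta_i})$. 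Hence $\sum_{\mathbf d}|\mu(\mathbf d)|\bigl(\prod_i d_i\bigr)^{-2/3+\varepsilon}$ converges absolutely for small $\varepsilon$ --- this is exactly the content of \cite[Lemma 11.15]{MR1679841}, cited in the paper's proof with $\varepsilon=\tfrac1{12}$. The same observation also corrects your claim that the main-term series factors as $\prod_p(1+O(p^{-1}))$, which would be a divergent product.

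The paper's proof is therefore a one-step triangle inequality: substitute Proposition~\ref{prop:A(B,d)} into \eqref{eq:moebius_inversion}, bound the aggregate error crudely by $B(\log B)^{r-2}(\log\log B)^{s-1}\sum_{\mathbf d}|\mu(\mathbf d)|\bigl(\prod_i d_i\bigr)^{-2/3+\varepsilon}$, and invoke convergence of that series. There is no need for a cutoff, no tail estimate, no dyadic decomposition, and no exploitation of oscillation in $\mu$. Your preliminary observation that $d_i\ll B^{c/m_i}$ is correct but superfluous, and the proposed dyadic/oscillation machinery is a detour around a problem that does not exist once one uses the actual coprimality structure of Salberger's~$\mu$.
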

	\begin{proof}
	By \eqref{eq:moebius_inversion} and Proposition \ref{prop:A(B,d)}
	we have
	$$\left| 
	N(B)-
	c B(\log B)^{r-1} 
 	\right| 
 	/ \left(B(\log B)^{r-2}(\log\log B)^{s-1} \right) \ll_{\varepsilon}
 	\sum_{\mathbf d\in(\ZZ_{>0})^{s}}\frac{|\mu(d)|}
	{\prod_{i=1}^s d_i^{\frac23-\varepsilon}}.
	$$
	For $\varepsilon=\frac1{12}$, the right hand side converges by \cite[Lemma 11.15]{MR1679841}.
	\end{proof}	
	
	Theorem \ref{thm:toric} follows from Proposition \ref{prop:final_estimate} 
	and the interpretation of the leading constant that we carry out in the following section.
	
\subsubsection{The leading constant}

	Here we prove that the leading constant \eqref{eq:leading_constant} can be written as
	\begin{equation}
	\label{eq:expected_leading_constant}
	\alpha(X, \Delta)\tau(X,\Delta)\prod_{i=1}^s\frac 1{m_i},
	\end{equation}
	where $\alpha(X,\Delta):=\alpha(-(K_X+\Delta))$ has been introduced in 
	Lemma \ref{lem:alpha_peyre}, and
	\[
	\tau(X,\Delta)=\int_{\overline {\mathscr X(\ZZ)}}H_{\Delta}(x) \delta_{\Delta}(x) \tau_X,
	\]
	where $\tau_X$ is the Tamagawa measure on the set of adelic points $X(\mathbb A_\QQ)$ 
	defined in \cite[Definition 2.8]{MR2740045}, 
	$H_\Delta$ is the height function defined by the divisor $\Delta$ as in \cite[\S2.1]{MR1369408}, 
	$\overline {\mathscr X(\ZZ)}$ is the closure of the set of rational points $\mathscr X(\ZZ)$ 
	inside the space of adelic points on $X$, 
	and $\delta_{\Delta}=\prod_{p\in\Omega_f}\delta_{\Delta, p}$, 
	where $\delta_{\Delta,p}$ is the characteristic function of the set of 
	Campana points in $\mathscr{X}(\ZZ_p)$ for each finite place $p$.

	We observe that by  \cite[Proposition 2.4.4]{MR1369408}, 
	the product \eqref{eq:expected_leading_constant} agrees with the expectation formulated 
	in \cite[\S3.3]{PSTVA} provided that the domains of integration in the definitions of 
	$\tau(X,\Delta)$ (i.e., $\{x\in \overline {\mathscr X(\ZZ)} : \delta_{\Delta}(x)=1\}$) coincide. 

	\begin{prop}
	\label{prop:constant}
	We have 
	$$\tau(X,\Delta)=\left(2^{s-r}\sum_{\sigma\in\Sigma_{\max}}\prod_{j\in\ii{\sigma}} m_j\right)
	\sum_{\mathbf d\in(\ZZ_{>0})^s}\left( \mu(\mathbf d) \prod_{i=1}^sc_{m_i,d_i} \right)>0.$$
	\end{prop}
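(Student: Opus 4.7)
My plan is to compute $\tau(X,\Delta)$ directly from its definition as an adelic integral, factoring it as a product of local contributions $\tau_\infty(X,\Delta)\prod_{p}\tau_p(X,\Delta)$, and then to identify the archimedean factor with $2^{s-r}\sum_{\sigma\in\Sigma_{\max}}\prod_{j\in\ii{\sigma}}m_j$ and the non-archimedean product with $\sum_{\mathbf d}\mu(\mathbf d)\prod_{i=1}^s c_{m_i,d_i}$.

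For the archimedean factor I would unfold the Tamagawa measure on $X(\RR)$ through the universal torsor parameterization of Section \ref{sec:polytopes_toric}, exactly as in Salberger's computation for the anticanonical case. The fiber of $\mathscr Y(\RR)\to X(\RR)$ has $2^r$ connected components (corresponding to signs in $(\RR^\times)^r$), while $\mathscr Y\subseteq\A^s$ contains the positive orthant via $2^s$ sign choices; this produces the factor $2^{s-r}$. The open cover $\{\widetilde P_\sigma\}_{\sigma\in\Sigma_{\max}}$ from Lemma \ref{lem:widetildePsigma} yields a decomposition of $X(\RR)$ into toric charts, and on each chart indexed by $\sigma$, the integral $\int H_\Delta(x)\,d\tau_{X,v_\infty}(x)$ reduces to an integral of $\prod_{i=1}^s y_i^{-(1-1/m_i)}$ against the torsor measure, subject to the condition $\mathbf y^{L(\sigma)}$ controlling the height; the change of variables $u_i=y_i^{1/m_i}$ for $i\in\ii{\sigma}$ contributes exactly the Jacobian factor $\prod_{j\in\ii{\sigma}}m_j$, after which the remaining integral is $\alpha_{\tilde i,\sigma}^{-1}$ times a factor absorbed into $\alpha(L)$ --- which is already separated off in the leading constant, so that only $\prod_{j\in\ii{\sigma}}m_j$ remains and summing over $\sigma$ yields the claimed archimedean factor.

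For each finite place $p$, the Tamagawa measure $\tau_{X,p}$ with the convergence factor $(1-1/p)^r$ applied to the Campana locus $\{x\in\mathscr X(\ZZ_p):\delta_{\Delta,p}(x)=1\}$ with density $H_{\Delta,p}$ unfolds through the torsor to a local sum over the possible $p$-adic valuation patterns $v_p(y_i)\in\{0\}\cup\ZZ_{\geq m_i}$. I would write this $p$-adic integral as a geometric-type series in $p^{-1/m_i}$ coming from summing the $m_i$-full condition at $p$, and compare with $\prod_{i=1}^s F_{m_i}(B,d_i)$ in the limit through the explicit formulae \eqref{eq:C_m_explicit} and \eqref{eq:G_m_explicit}; this identifies $\tau_p$ up to a M\"obius-type alternating sum with $\sum_{\mathbf d_p}\mu(\mathbf d_p)\prod_i(\text{local factor})$, where $\mathbf d_p$ runs over tuples $(d_{1,p},\dots,d_{s,p})\in\{1,p\}^s$. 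Taking the product of these local identities over all primes and absolutely rearranging --- using the convergence of $\sum_{\mathbf d}|\mu(\mathbf d)|\prod c_{m_i,d_i}$, which follows from \eqref{eq:c_md_estimate} and \cite[Lemma 11.15]{MR1679841} --- yields the global factor $\sum_{\mathbf d\in(\ZZ_{>0})^s}\mu(\mathbf d)\prod_{i=1}^s c_{m_i,d_i}$. Positivity of $\tau(X,\Delta)$ then follows because $\tau_\infty$ is a positive integral and each finite $\tau_p$ is positive (the $p$-adic Campana locus has positive $p$-adic measure because it contains the open subset where every $v_p(y_i)=0$).

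The main technical obstacle will be keeping the Tamagawa normalizations and convergence factors consistent between the two equivalent expressions: the one arising from the dissection used to prove Proposition \ref{prop:A(B,d)} and M\"obius inversion \eqref{eq:moebius_inversion}, versus the one coming directly from the adelic integral defining $\tau(X,\Delta)$. In particular, care is needed to verify that the $p$-adic integral with density $H_{\Delta,p}$ and convergence factor $(1-1/p)^r$ really does match the Euler factor $\sum_{\mathbf d_p\in\{1,p\}^s}\mu(\mathbf d_p)\prod_i c_{m_i,d_i,p}$, where $c_{m_i,d_i,p}$ denotes the local factor of $c_{m_i,d_i}$ at $p$. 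Once this matching is in place, a direct comparison of the closed-form expression for $c$ in Proposition \ref{prop:final_estimate} with $\alpha(X,\Delta)\tau(X,\Delta)\prod_{i=1}^s m_i^{-1}$ --- using $\prod_{i=1}^s m_i^{-1}\cdot\prod_{j\in\ii{\sigma}}m_j=\prod_{i\in\ic{\sigma}}m_i^{-1}$ --- gives the asserted identity.
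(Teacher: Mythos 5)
Your overall strategy -- factor $\tau(X,\Delta)$ as a product of local integrals and unfold each through the universal torsor $\pi:\mathscr Y\to\mathscr X$ -- is the same as the paper's, and the final local expressions you target are the right ones. But the archimedean computation as you describe it does not work, and the final logical step is circular.

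The confusion in the archimedean factor: the open toric charts of $X(\RR)$ are the affine pieces $U_\sigma$ indexed by maximal cones, not the polytopes $\widetilde P_\sigma\subseteq\RR^s$ from Lemma \ref{lem:widetildePsigma}, which live in exponent space and only enter the hyperbola-method side of the argument. On the chart $U_\sigma$, the density $H_{\Delta,\infty}$ is $\prod_{j\in\ii{\sigma}}|x_j|^{1/m_j-1}$, and the factors $m_j$ arise from the elementary integrals $\int_0^1 x_j^{1/m_j-1}\,\mathrm dx_j=m_j$ after the normalization of $\tau_{X,\infty}$ from \cite[Proposition 9.16]{MR1679841}, not from a change of variables $u_i=y_i^{1/m_i}$ (that substitution belongs to the counting of $m_i$-full integers, not to the Tamagawa integral). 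There is no $\alpha_{\tilde i,\sigma}^{-1}$ or $\alpha(L)$-type factor appearing in this integral to be ``absorbed into $\alpha(L)$''; $\alpha(L)$ comes exclusively from the polytope-volume computation in Lemma \ref{lem:alpha_constant} feeding into $c_P$ in the hyperbola method, and it is logically independent of $\tau(X,\Delta)$. If your archimedean Tamagawa integral produced such a factor, that would be an error, not something to separate off.

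Also, the closing paragraph cannot work as stated. The purpose of Proposition \ref{prop:constant} is precisely to establish a closed-form identity for $\tau(X,\Delta)$ so that the constant $c$ from Proposition \ref{prop:final_estimate} can afterwards be recognized as $\alpha(X,\Delta)\tau(X,\Delta)\prod_i m_i^{-1}$; you cannot obtain the identity for $\tau(X,\Delta)$ by ``comparing'' $c$ with $\alpha\tau\prod m_i^{-1}$, since that comparison presupposes the very formula you are trying to prove. What the paper actually does is compute $\tau(X,\Delta)$ entirely from its adelic definition: for finite $p$, using the $\GG_m^r$-torsor and \cite[Corollary 2.23, Propositions 9.7, 9.13, 9.14, Lemma 11.15]{MR1679841} to reduce to an explicit $p$-adic integral over valuation patterns, recognize the local factor in terms of \eqref{eq:C_m_explicit} and \eqref{eq:G_m_explicit}, and then invoke \cite[Lemma 11.15(e)]{MR1679841} to turn the absolutely convergent Euler product into the sum over $\mathbf d$. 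Your invocation of $F_{m_i}(B,d_i)$ here is not the right tool; those counting functions belong to Proposition \ref{prop:A(B,d)}, not to the $p$-adic Tamagawa integral. Your positivity argument is fine in spirit, but needs the convergence of the Euler product established first.
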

	\begin{proof}
	For every prime number $p$, we denote by $\Fr_p$ the geometric Frobenius acting 
	on $\pic(\mathscr 	X_{\overline{\FF}_p})\otimes_\ZZ\QQ$, and for $t\in\CC$
	we define $L_p(t,\pic(X_{\overline \QQ})):=\det(1-p^{-t} \Fr_p)^{-1}$. 
 	Since $X$ is split, and hence $\mathscr{X}_{\overline{\FF}_p}$ is split, 
 	we have $L_p(t,\pic(X_{\overline \QQ}))=(1-p^{-t})^{-r}$ for every prime $p$.
	Let $\lambda_p:=L_p(1,\pic(X_{\overline \QQ}))=(1-p^{-1})^{-r}$, 
	and define $\lambda:=\lim_{t\to 1}(t-1)^{r}\prod_{p\in\Omega_f}L_p(t,\pic(X_{\overline \QQ}))$. 
	Then $\lambda=\lim_{t\to 1}(t-1)^{r}\zeta(t)^r=1$ by properties of the residue at $1$ 
	of the Riemann zeta function. Let $\lambda_{\infty}:=1$. 
	By \cite[Remark 2.9(b)]{MR2740045}, we have
	$\tau_X=\lambda \prod_{v\in\Omega_\QQ}\lambda_v^{-1}\tau_{X,p}$, where $\tau_{X,v}$ 
	denotes the local measure on $X(\QQ_v)$ defined in \cite[\S2.1.7]{MR2740045}.

	Since split toric varieties satisfy weak approximation (e.g., \cite[\S2]{MR2029861}) 
	and $\mathscr X$ is projective, we get 
	\[
	\tau(X,\Delta)=\int_{X(\RR)}H_{\Delta, \infty}(x)\tau_{X,\infty}
	\prod_{p \in \Omega_f}\lambda_p^{-1}
	\int_{ {\mathscr X(\ZZ_p)}}H_{\Delta,p}(x)\delta_{\Delta,p} (x) 	\tau_{X,p},
	\]
	where 
	$H_\Delta=\prod_{v\in\Omega_\QQ} H_{\Delta,v}$ as in \cite[\S2.1]{MR1369408}. 
	We recall that the global metrization used to define $H_\delta$ in \cite[\S2.1]{MR1369408} 
	is equivalent to the one determined by \cite[Proposition and definition 9.2]{MR1679841}. 

	Since $\tau_{X,\infty}$ is the measure used in \cite[Proposition 9.16]{MR1679841} for the real place, 
	we have
	\begin{gather*}
	\int_{X(\RR)}H_{\Delta, \infty}(x)\tau_{X,\infty}
	=\sum_{\sigma\in\Sigma_{\max}}\left(2^{s-r}\prod_{j \in \ii{\sigma}}
	\int_{0\leq x_j\leq 1}x_j^{\frac 1{m_j}-1}\mathrm d x_j\right)\\
	=2^{s-r}\sum_{\sigma\in\Sigma_{\max}}\prod_{j\in \ii{\sigma}} m_j.
	\end{gather*}

	Moreover, if we use the $\GG_m^r$-torsor structure on  
	$\pi:\mathscr Y(\ZZ_p)\to\mathscr X(\ZZ_p)$ together 
	with \cite[Corollary 2.23, Propositions 9.7,  9.13 and 9.14]{MR1679841}  we get
	\begin{gather*}
	\int_{\mathscr X(\ZZ_p)}H_{\Delta,p}(x)\delta_{\Delta,p} (x) \tau_{X,p}
	=\left(\int_{\GG_m^r(\ZZ_p)}\mathrm d z\right)^{-1} 
	\int_{\mathscr Y(\ZZ_p)}H_{\Delta,p}(\pi(y))\delta_{\Delta,p} (\pi(y)) \prod_{i=1}^s\mathrm d y_i,
	\end{gather*}
	where $dz$ and $\prod_{i=1}^s\mathrm d y_i$ are the Haar measures on $\QQ_p^r$ and $\QQ_p^s$, 
	respectively, induced by the Haar measure on $\QQ_p$ normalized such that $\ZZ_p$ has volume $1$, 
	and $\delta_{\Delta,p}\circ \pi$ is the characteristic function of the property
	\[
	v_p(y_i)>0 \quad  \Rightarrow \quad v_p(y_i)\geq m_i, \qquad \forall i\in\{1,\dots,s\}.
	\]
	We have $\int_{\GG_m^r(\ZZ_p)}\mathrm d z=(1-p^{-1})^r$. 
	Let $\chi$ be the characteristic function of $\mathscr Y$. By \cite[Lemma 11.15]{MR1679841}
	we have
	\begin{gather*}
	\int_{\mathscr Y(\ZZ_p)}H_{\Delta,p}(\pi(\mathbf y))\delta_{\Delta,p} (\pi(\mathbf y)) 
	\prod_{i=1}^s\mathrm d y_i
	=\int_{\ZZ_p^N}\chi(\mathbf y)\delta_{\Delta,p} (\pi(\mathbf y)) 
	\prod_{i=1}^s | y_i |_p^{\frac 1{m_i}-1} \prod_{i=1}^s\mathrm d y_i\\
	=\sum_{(\mathbf e)\in\{0,1\}^s}\mu((p^{e_1},\dots,p^{e_s})) 
	\int_{p^{e_i}\mid y_i, 1\leq i\leq s}\delta_{\Delta,p} (\pi(\mathbf y)) 
	\prod_{i=1}^s | y_i |_p^{\frac 1{m_i}-1} \prod_{i=1}^s\mathrm d y_i\\
	=\sum_{(\mathbf e)\in\{0,1\}^s}\mu((p^{e_1},\dots,p^{e_s}))  
	\prod_{i=1}^s \left(
	(1-e_i)\int_{\ZZ_p^{\times}}\mathrm d y_i 
	+ \sum_{j=m_i}^\infty\int_{p^j\ZZ_p^\times}|y_i|_p^{\frac 1{m_i}-1} \mathrm d y_i
	\right)\\
	=\sum_{(\mathbf e)\in\{0,1\}^s}\mu((p^{e_1},\dots,p^{e_s})) 
 	\prod_{i=1}^s\left(
 	\left (1-p^{-1} \right)\left(1-e_i +\sum_{j=m_i}^\infty p^{-\frac j{m_i}}\right)
	\right).
	\end{gather*}
 	We observe that the product 
 	$\left (1-p^{-1} \right)\left (1-e_i +\sum_{j=m_i}^\infty p^{-\frac j{m_i}}\right)$ equals
	\begin{gather*}
	\begin{cases}
	1+\sum_{j=m_i+1}^{2m_{i}-1}p^{-\frac j{m_i}} 
	& \text{ if } e_i=0\\
 	\left(1+\sum_{j=m_i+1}^{2m_{i}-1}p^{-\frac j{m_i}} \right ) 
 	\left (1+p-p^{\frac {m_i-1}{m_i}} \right)^{-1}
 	& \text{ if } e_i=1.
	\end{cases}
	\end{gather*}
	So, remembering \eqref{eq:C_m_explicit} and \eqref{eq:G_m_explicit}, we get
	\begin{multline*}
	\tau({X,\Delta})
	\left(2^{s-r}\sum_{\sigma\in\Sigma_{\max}}\prod_{j\in\ii{\sigma}} m_j\right)^{-1} 
	\\
	=
	\left(\prod_{i=1}^s C_{m_i}\right)
	\left (
	\prod_{p\in\Omega_f}\sum_{\mathbf e\in\{0,1\}^s} \mu ((p^{e_1},\dots,p^{e_s})) 
	\prod_{i=1}^s\left (p^{-1}+G_{m_i}\left (p^{-\frac 1{m_i}}\right )\right )^{e_i}
	\right).
	\end{multline*}
	We conclude by \cite[Lemma 11.15(e)]{MR1679841} as 
	$\prod_{p\mid d_i}\left(p^{-1}+G_{m_i}\left(p^{-\frac 1{m_i}}\right)\right)\ll_\epsilon d_i^{-1+\epsilon}$
	 by \eqref{eq:c_md_estimate} for all $i\in\{1,\dots,s\}$.

	To show that $\tau({X,\Delta})$ is positive it suffices to observe that 
	$$
	\int_{\mathscr Y(\ZZ_p)}H_{\Delta,p}(\pi(y))\delta_{\Delta,p} (\pi(y)) \prod_{i=1}^s\mathrm d y_i 
	\geq 
 	\int_{\mathscr (\ZZ_p^\times)^s} \prod_{i=1}^s\mathrm d y_i =(1-p^{-1})^s >0,
	$$
	as the integral on the right is the restriction of the integral on the left to the subset 
	$(\ZZ_p^\times)^s	\subseteq \mathscr Y(\ZZ_p)$.
	\end{proof}
	
\section*{Acknowledgements}
We thank for their hospitality the organizers of the trimester program 
	``Reinventing Rational Points'' at the Institut Henri Poincar\'e and the organizers of the workshops ``Rational Points'' 2022 and 2023 at Schney, where we made significant progress on this project. We thank the referees for their comments, that improved the exposition of this article. 
	The first author is supported by a NWO grant VI.Vidi.213.019.
	The second author is supported by a NWO grant 016.Veni.173.016.

\bibliographystyle{alpha}
\bibliography{bibliography_revision}
\end{document}